\newcolumntype{C}{>$c<$}
	\pgfplotsset{width=7cm,compat=1.3}
	\pgfplotsset{every linear axis/.append style={
		/pgf/number format/.cd,
		use comma,
		1000 sep={\,},
	}}
\renewcommand{\leq}{\ensuremath{\leqslant}} 
\renewcommand{\geq}{\ensuremath{\geqslant}}
\DeclareMathOperator{\Z}{\mathbb Z}
\DeclareMathOperator{\Lie}{\mathcal L}
\DeclareMathOperator{\Der}{Der}
\DeclareMathOperator{\rk}{rk}
\DeclareMathOperator{\Aut}{Aut}
\DeclareMathOperator{\gr}{gr}
\DeclareMathOperator{\Ann}{Ann}
\DeclareMathOperator{\Lynd}{Lynd}
\newtheorem*{rep@theorem}{\rep@title}
\newcommand{\newreptheorem}[2]{%
\newenvironment{rep#1}[1]{%
 \def\rep@title{#2 \ref{##1}}%
 \begin{rep@theorem}}%
 {\end{rep@theorem}}}
\theoremstyle{plain}
\newtheorem{theo}{Theorem}
\newtheorem{fait}[theo]{Fact} 
\newtheorem{lem}[theo]{Lemma}
\newtheorem{prop}[theo]{Proposition}
\newtheorem{cor}[theo]{Corollary}
\newcounter{PB}
\newtheorem{pb}[PB]{Problem}
\numberwithin{equation}{theo}
\theoremstyle{definition}
\newtheorem{defi}[theo]{Definition}
\newtheorem{nota}[theo]{Notation}
\newtheorem{conv}[theo]{Convention}
\newtheorem{rmq}[theo]{Remark}
\renewcommand{\k}{\Bbbk}
\newcommand{\Enstq}[2]{\left\{\ #1\ \middle|\ #2\ \right\}}    
\newcommand*{\longhookrightarrow}{\ensuremath{\lhook\joinrel\relbar\joinrel\rightarrow}}
\newenvironment{relations}
 {\enumerate[itemsep=-1pt, label=\textbf{(R\arabic*)}, ref=R\arabic*]}
 {\endenumerate}
\let\oldpagenumbering\pagenumbering
\renewcommand{\pagenumbering}[1]{%
	\cleardoublepage
	\oldpagenumbering{#1}
}
\author{Jacques \scshape{Darn\'e}}
\title{Milnor invariants of braids and welded braids \newline up to homotopy}
\date{April 1st, 2020}
\begin{document}

\maketitle

\begin{abstract}
We consider the group of pure welded braids (also known as loop braids) up to (link-)homotopy. The pure welded braid group classically identifies, \emph{via} the Artin action, with the group of basis-conjugating automorphisms of the free group, also known as the McCool group $P\Sigma_n$. It has been shown recently that its quotient by the homotopy relation identifies with the group $hP\Sigma_n$ of basis-conjugating automorphisms of the reduced free group. In the present paper, we describe a decomposition of this quotient as an iterated semi-direct product which allows us to solve the Andreadakis problem for this group, and to give a presentation by generators and relations. The Andreadakis equality can be understood, in this context, as a statement about Milnor invariants; a discussion of this question for classical braids up to homotopy is also included.
\end{abstract}

\numberwithin{theo}{section}

\section*{Introduction}

The present paper is a contribution to the theory of loop braids (also called \emph{welded braids}), \emph{via} the study of their finite-type invariants. Finite-type invariants were defined by Vassiliev in 1990 \cite{Vassiliev} and were much studied during the 90's (see for instance \cite{Kontsevich, Gusarov}), giving birth to a whole field of research, which is still very active nowdays. Finite-type invariants of string-links and braids have been the focus of several papers in the late 90's, by Stanford \cite{Stanford1, Stanford2}, Mostovoy and Willerton \cite{Mostovoy-Willerton}, and Habegger and Masbaum \cite{Habegger-Masbaum}. By then, finite-type invariants of braids were fairly well-understood. Meanwhile, a generalization of finite-type invariants to virtual knotted objects was introduced in \cite{GPV}. However, it was only much later that this definition was used and studied for welded knotted objects \cite{Bar-Natan-wI, Bar-Natan-wII}. 
In the meantime, the interest for welded knotted objects had grown, as the link between welded diagrams, four-dimensional topology and automorphisms of the free group had become more apparent \cite{FRR, Satoh-Virtual, Baez}; see \cite{Damiani} for a survey of welded braids. In recent years, the study of these objects has been flourishing; see for instance \cite{Kamada, Bardakov-Bellingeri, Audoux, NNSY, Meilhan-Yasuhara,  Damiani2}. In particular, link-homotopy for these objects (corresponding to self-virtualization moves in welded diagrams) has been the focus of several recent papers \cite{ABMW, AMW, Audoux-Meilhan}.

The invariants under scrutiny in this paper appear naturally as filtrations on groups. Precisely, suppose given a group $G$, whose elements are the objects one is interested in. For example, these could be mapping classes of a manifold, automorphisms of a group, (welded) braids up to isotopy, (welded) braids up to homotopy, etc. Suppose also given a filtration of $G$ by subgroups: $G = G_1 \supseteq G_2 \supseteq \cdots$. Then one can consider the class $[g]_d$ of an element $g \in G$ inside $G/G_{d+1}$ and hope to understand $g$ through its approximations $[g]_d$, which become finer and finer as $d$ grows to infinity.  These approximation are often easier to understand than $g$. For instance, $[g]_d$ could be described by a finite family of integers (or other simple mathematical objects), that we would call \emph{invariants of degree at most $d$}.

With this point of view, the question of comparing different filtrations on the same group (such as the Andreadakis problem -- see \cref{intro_to_Andreadakis}) can be interpreted as a problem of comparison between different kinds of invariants. Conversely, comparing different notions of invariants on elements of a group can often be interpreted as a problem of comparison between different filtrations on the group, provided that these invariants are indexed by some kind of degree measuring their accuracy, and that they possess some compatibility with the group structure. It is mainly the latter point of view that we adopt below, working with filtrations on groups, with a rather algebraic point of view, getting back to the language of invariants only to interpret our results. This is motivated by the fact that the invariants we consider are strongly compatible with the group structures: not only do they come from filtrations by subgroups, as described above, but these filtrations are \emph{strongly central}, a very nice property allowing us to study them using Lie algebras. Moreover, all the filtrations we consider do have a natural algebraic definition.

We consider mainly three kinds of filtrations (or invariants):
\begin{itemize}[noitemsep,topsep=3pt]
\item \textbf{Minor invariants} correspond to \emph{Andreadakis-like filtrations} (or the Johnson filtration for the Mapping Class Group). These are defined for automorphism groups of groups, and there subgroups.
\item \textbf{Finite-type (or Vassiliev) invariants} with coefficients in a fixed commutative ring $\k$ correspond to the \emph{dimension filtration} $D_*^{\k}G = G \cap (1+ I^*)$, where $I$ is  the augmentation ideal of the group ring $\k G$.
\item \textbf{The lower central series} on $G$ is the minimal strongly central filtration on $G$.
\end{itemize}
The minimality of the lower central series means that the corresponding invariants of degree $d$ contain as much information as possible for invariants possessing this compatibility with the group structure. Since the two other filtrations are also strongly central, and the Milnor invariants are of finite type, the above list goes from the coarsest invariants to the finest ones. Thus, although we will not always emphasize this in the sequel, the reader should keep in mind that a statement of the form ``Milnor invariants of degree at most $d$ distinguish classes of elements $g \in G$ modulo $\Gamma_{d+1}G$" \emph{implies} that Milnor invariants of degree at most $d$ are universal finite-type invariants of degree at most $d$, and that finite-type invariants of degree at most $d$ distinguish classes of elements $g \in G$ modulo $\Gamma_{d+1}G$.

\subsection*{Main results}

We are interested in the group of pure welded braids (or pure welded string-links) up to homotopy. This group identifies, through a version of the Artin action up to homotopy, with the group $hP\Sigma_n$ of (pure) basis-conjugating automorphisms of the \emph{reduced free group} $RF_n$ (see Def.\ \ref{def1_RF}). The key result of this paper is the decomposition theorem:
\begin{reptheo}{dec_of_hMcCool}
There is a decomposition of $hP\Sigma_n$ into a semi-direct product:
\[hP\Sigma_n \cong \left[\left( \prod\limits_{i <n} \mathcal N(x_n)/x_i\right) \rtimes (RF_n/x_n)\right] \rtimes hP\Sigma_{n-1},\]
where $\mathcal N(x_n)/x_i$ is the normal closure of $x_n$ inside $RF_n/x_i$, and the action of $RF_n/x_n \cong RF_{n-1}$ on the product is the diagonal one. Moreover, the semi-direct product on the right is an almost direct one.
\end{reptheo}
The reduced free group is studied in \cref{Section1}. In particular, using the version of the Magnus expansion for the reduced free groups introduced by Milnor, which takes values in the \emph{reduced free algebra}, we are able to show an analogue of Magnus' theorem:
\begin{reptheo}{L(RF) = LR}
The Lie ring of the reduced free group identifies with the \emph{reduced free algebra} on the same set of generators.
\end{reptheo}

The restriction $hP\Sigma_n \cap \mathcal A_*(RF_n)$ of the Andreadakis filtration $\mathcal A_*(RF_n)$ of $RF_n$ encodes Milnor invariants of pure welded braids. We are able to determine the structure of the associated graded Lie algebra in \cref{Section2}:
\begin{reptheo}{Johnson_conj_iso}
The Lie algebra $\Lie \left(hP\Sigma_n \cap \mathcal A_*(RF_n)\right)$ identifies, \emph{via} the Johnson morphism, to the algebra of \emph{tangential derivations} of the reduced free algebra.
\end{reptheo}

On the other hand, the decomposition of $hP\Sigma_n$ (Theorem \ref{dec_of_hMcCool}) induces a decomposition of its lower central series, which in turn gives a decomposition of the associated Lie algebra (Theorem \ref{L(wPn)}). We are thus able to compare the lower central series and the Andreadakis filtrations via a comparison of their associated graded Lie algebras, getting the promised comparison result, which we also show for classical braids up to homotopy:
\begin{reptheo}{Andreadakis_for_wP_n}
The Andreadakis equality holds for $G = hP_n$ and $G = hP\Sigma_n$: 
\[G \cap \mathcal A_*(RF_n) = \Gamma_*G.\] 
In other words, Milnor invariants of degree at most $d$ classify braids up to homotopy (resp.\ welded braids up to homotopy) up to elements of $\Gamma_{d+1}(hP_n)$ (resp.\ $\Gamma_{d+1}(hP\Sigma_n)$).
\end{reptheo}

Remark that there is no obvious link between this theorem and its analogue up to isotopy. On the one hand, for classical braids up to isotopy, the fact that Milnor invariants can detect the lower central series has been known for a long time \cite{Mostovoy-Willerton, Habegger-Masbaum}, but the result up to homotopy is new, and cannot be deduced from the former (as far as I know). On the other hand, for welded braid (that is, for basis-conjugating automorphisms of the free group), the result up to isotopy is still opened. One feature of $hP_n$ and $hP\Sigma_n$ which makes them very different from $P_n$ and $P\Sigma_n$ (and in fact, easier to handle) is their \emph{nilpotence}, which is used throughout the paper.

\smallskip

Finally, we use our methods to give a presentation of the group $hP\Sigma_n$. A classical result of McCool \cite{McCool} asserts that the group $P\Sigma_n$ of (pure) basis-conjugating automorphisms of the free group $F_n$ is the group generated by generators  $\chi_{ij}\ (i \neq j)$ submitted to the \emph{McCool relations}:
\[\begin{cases}
[\chi_{ik}\chi_{jk}, \chi_{ij}] = 1 &\text{ for $i,j,k$ pairwise distinct,} \\
[\chi_{ik}, \chi_{jk}] = 1 &\text{ for $i,j,k$ pairwise distinct,} \\
[\chi_{ij},\chi_{kl}] = 1 &\text{ if } \{i,j\} \cap \{k,l\} = \varnothing,
\end{cases}\]
We show that we need to add three families of relation to get its quotient $hP\Sigma_n$:
\begin{reptheo}{The_presentation}
The pure loop braid group up to homotopy $hP\Sigma_n$ is the group generated by generators  $\chi_{ij}\ (i \neq j)$ submitted to the McCool relations on the $\chi_{ij}$, and the three families of relations:
\[[\chi_{mi}, w, \chi_{mi}] = [\chi_{im}, w, \chi_{jm}] = [\chi_{im}, w, \chi_{mi}]= 1,\]
for $i,j < m$, $i\neq j$, and $w \in \langle \chi_{mk} \rangle_{k < m}$.
\end{reptheo}

The method used for the group can be adapted to the Lie algebra associated to the lower central series of $hP\Sigma_n$. We show in \cref{par_pstation_Lie} that it admits a similar presentation. We also give a presentation of the Lie algebra of $hP_n$ in corollary \ref{hDrinfeld-Kohno}.

\paragraph{Acknowledgements:}
The author thanks warmly Jean-Baptiste Meilhan for having brought to his attention the group under scrutiny here, and for numerous helpful discussions about the topology involved. He thanks Sean Eberhard for his answer to the question he asked on MathOverflow about finite presentations of nilpotent groups. He also thanks Prof.\ T.\ Kohno for asking the question which lead to the results of \cref{par_pstation_Lie}.

\setcounter{tocdepth}{2}
\tableofcontents

\section{Reminders: strongly central series and Lie rings}

We give here a short introduction to the theory of strongly central filtrations and their associated Lie rings, whose foundations were laid by M. Lazard in \cite{Lazard}. Details may be found in \cite{Darne1, Darne2}.

\subsection{A very short introduction to the Andreadakis problem}\label{intro_to_Andreadakis}

Let $G$ be an arbitrary group. The left and right action of $G$ on itself by conjugation are denoted respectively by $x^y = y^{-1}xy$ and ${}^y\! x = yxy^{-1}$.
The \emph{commutator} of two elements $x$ and $y$ in $G$ is $[x,y]:= xyx^{-1}y^{-1}$.
If $A$ and $B$ are subsets of $G$, we denote by $[A, B]$ the subgroup generated by all commutators $[a,b]$ with $(a,b) \in A \times B$. 
We denote the \emph{abelianization} of $G$ by $G^{ab}:= G/[G,G]$ and its lower central series by $\Gamma_*(G)$, that is: 
\[G = \Gamma_1(G) \supseteq [G,G] = \Gamma_2(G) \supseteq [G,\Gamma_2(G)] = \Gamma_3(G)\supseteq \cdots\]

The lower central series is a fundamental example of a \emph{strongly central filtration} (or \emph{N-series}) on a group $G$:
\begin{defi}
A \emph{strongly central filtration} $G_*$ on a group $G$ is a nested sequence of subgroups  $G = G_1 \supseteq G_2 \supseteq G_3 \cdots$ such that $[G_i, G_j] \subseteq G_{i+j}$ for all $i, j \geq 1$. 
\end{defi}
In fact, the lower central series is the minimal such filtration on a given group $G$, as is easily shown by induction.

\medskip

Recall that when $G_*$ is a strongly central filtration, the quotients $\Lie_i(G_*):= G_i/G_{i+1}$ are abelian groups, and the whole graded abelian group  $\Lie(G_*):= \bigoplus{G_i/G_{i+1}}$  is a Lie ring (\emph{i.e.\ }a Lie algebra over $\Z$), where Lie brackets are induced by group commutators. The lower central series of a group is usually difficult to understand, but we are often helped by the fact that its associated Lie algebra is always generated in degree one.

\begin{conv}
If $g$ is an element of a group $G$ endowed with a (strongly central) filtration $G_*$, the \emph{degree of $g$ with respect to $G_*$} is the minimal integer $d$ such that $g \in G_d - G_{d+1}$. Since most of the filtrations we consider satisfy $\bigcap G_i = \{1\}$, this is well-defined (if not, we could just say that $d = \infty$ for elements of $\bigcap G_i$). We often speak of \emph{the class $\overline g$ of $g$ in the Lie algebra $\Lie(G_*)$}, by which we mean the only non-trivial one, in $\Lie_d(G_*) = G_d/G_{d+1}$, where $d$ is the degree of $g$ with respect to $G_*$, unless a fixed degree is specified. 
\end{conv}

When $G_*$ is a strongly central filtration on $G=G_1$, there is a universal way of defining a strongly central filtration on a group of automorphisms of $G$. Precisely, we get a strongly central filtration on a subgroup of $\Aut(G_*)$, the latter being the group of automorphisms of $G$ preserving the filtration $G_*$:
\begin{equation}\label{def_A_*}
\mathcal A_j(G_*):= \Enstq{\sigma \in \Aut(G_*)}{\forall i \geq 1,\ [\sigma, G_i] \subseteq G_{i+j}}.
\end{equation} 
The commutator is computed in $G \rtimes \Aut(G)$, which means that for $\sigma \in \Aut(G)$ and $g \in G$, $[\sigma, g] = \sigma(g)g^{-1}$. Thus, $\mathcal A_j(G_*)$ is the group of automorphisms of $G_*$ acting trivially on the quotients $G_i/G_{i+j}$ ($i \geq 1$). For instance, $\mathcal A_1(G_*)$ is the group of automorphisms of $G_*$ acting trivially on $\Lie(G_*)$. When $G_*$ is the lower central series of a group $G$, then $\Lie(G):= \Lie(\Gamma_*(G))$ is generated (as a Lie ring) by $\Lie_1(G) = G^{ab}$, so that $\mathcal A_1(G)$ identifies with the group $IA_G$ of automorphisms of $G$ acting trivially on its abelianization $G^{ab}$. Thus $\mathcal A_*(G):= \mathcal A_*(\Gamma_*(G))$ is a strongly central filtration on $IA_G$, and we can try to understand how it compares to the minimal such filtration on $IA_G$, which is its lower central series:

\begin{pb}[Andreadakis]
For a given group $G$, how close is the inclusion of $\Gamma_*(IA_G)$ into $\mathcal A_*(G)$ to be an equality ?
\end{pb}

One way to attack this problem is to restrict to subgroups of $IA_G$. Precisely, if $K \subseteq IA_G$ is a subgroup, we can consider the following  three strongly central filtrations on $K$:
\[\Gamma_*(K) \subseteq K \cap \Gamma_*(IA_G) \subseteq K \cap \mathcal A_*(G).\]

\begin{defi}
We say that the \emph{Andreadakis equality} holds for a subgroup $K$ of $IA_G$ when $\Gamma_*(K) = K \cap \mathcal A_*(G)$.
\end{defi}

Our three main tools in calculating Lie algebras are the following:

\paragraph{Lazard's theorem \cite[Th.\ 3.1]{Lazard} (see also \cite[Th.\ 1.36]{Darne1}):} if $A$ is a filtered ring (that is, $A$ is filtered by ideals $A = A_0 \supseteq A_1 \supseteq A_2 \supseteq \cdots$ such that $A_iA_j \subseteq A_{i+j}$), the subgroup $A^\times \cap (1+A_1)$ of $A^\times$ inherits a strongly central filtration $A^\times_*:= A^\times \cap (1+A_*)$ whose Lie ring embeds into the graded ring $\gr(A_*)$, \emph{via}:
\[\left\{ 
\begin{array}{clc}
\Lie\left(A^\times_*\right) &\longhookrightarrow &\gr(A_*) \\[0.2em]
\overline x                 &\longmapsto         & \overline{x-1}.
\end{array}
\right.\]
If $G$ is any group endowed with a morphism $\alpha: G \rightarrow A^\times$, then we can pull the filtration $A^\times_*$ back to $G$, and $\Lie(\alpha^{-1}(A^\times_*))$ embeds into $\Lie(A^\times_*)$, thus into $\gr(A_*)$. 

\paragraph{Semi-direct product decompositions \cite[\S 3.1]{Darne2}:} If $G_*$ is a strongly central filtration, $G_* = H_* \rtimes K_*$ is a \emph{semi-direct product of strongly central filtrations} if $G_i = H_i \rtimes K_i$ is a semi-direct product of groups for all $i$, and $[K_i, H_j] \subseteq H_{i+j}$ for all $i,j$. Then the strong centrality of $G_*$ implies that $H_*$ and $K_*$ must be strongly central. This kind of decomposition is useful because it induces a decomposition of Lie algebras:
\[\Lie(G_*) = \Lie(H_*) \rtimes \Lie(K_*).\]

Now, if $G = H \rtimes K$ is any semi-direct product of groups, then its lower central series decomposes into a semi-direct product $\Gamma_*(G) = \Gamma_*^K(H) \rtimes \Gamma_*(K)$ of strongly central filtrations, where $\Gamma_*^G(H)$ is defined by:
\[H = \Gamma_1^K(H) \supseteq [G,H] = \Gamma_2^K(H) \supseteq [G,\Gamma_2^K(H)] = \Gamma_3^K(H)\supseteq \cdots\]
When the semi-direct product is an \emph{almost-direct} one, which means that $K$ acts trivially on $H^{ab}$, then $\Gamma_*^K(H) = \Gamma_*(H)$, so that is this case:
\[\Lie(H \rtimes K) = \Lie(H) \rtimes \Lie(K).\]

\paragraph{The Johnson morphism \cite[\S 1.4]{Darne1}:} A very useful tool to study a filtration of the form $\mathcal A_*(G_*)$ is the Johnson morphism, which encodes the fact that the associated graded Lie algebra $\Lie(\mathcal A_j(G_*))$ acts faithfully on the graded Lie algebra $\Lie(G_*)$. It is defined by:
\[\tau: \left\{
\begin{array}{clc}
\Lie \left(\mathcal A_*(G_*)\right) &\longhookrightarrow &\Der\left(\Lie(G_*)\right) \\
\overline \sigma &\longmapsto &\overline{[\sigma, -]},
\end{array}
\right.\] 
which means that it is induced by $\sigma \mapsto (x \mapsto \sigma(x)x^{-1})$. Its injectivity comes from the universality of the filtration $\mathcal A_*(G_*)$.

If we want to compare the filtration $\mathcal A_*(G_*)$ with another one, we can do it using comparison morphisms. For example, if $K$ is a subgroup of $\Aut(G_*)$, the inclusion of $\Gamma_*K$ into $K \cap \mathcal A_*(G_*)$ induces a morphism $i_*: \Lie(K) \rightarrow \Lie(K \cap \mathcal A_*(G_*))$ which is injective if and only if $\Gamma_*K = K \cap \mathcal A_*(G_*)$. Thus we can show the Andreadakis equality by showing the injectivity of the morphism $\tau':= \tau \circ i_*$ ($\tau'$ is also sometimes called the Johnson morphism).

\subsection{The case of the free group}

Before beginning our study of the Andreadakis problem for the reduced free group, it may be useful to recall some basic facts about the free group case. Here $F_n$ denotes the free group on $n$ generators $x_1, ..., x_n$.

\paragraph{Magnus expansions:} The assignment $x_i \mapsto 1+X_i$ defines an embedding of $F_n$ into the group of invertible power series on $n$ non-commuting indeterminates $X_1, ..., X_n$ with integral coefficients.  In fact, it is easy to see that it defines a morphism to $1 + (X_1, ..., X_n)$, and that this induces (using universal properties) an isomorphism of completed rings:
\[\widehat{\Z F_n} \cong \widehat{T[n]},\]
where the group ring $\Z F_n$ is completed with respect to the filtration by the powers of its augmentation ideal, and the tensor algebra $T[n]$ on $n$ generators $X_1, ..., X_n$ is completed with respect to the usual valuation. One shows that the above morphism from $F_n$ to this ring is injective by showing directly that the image of a reduced non-trivial word must be non-trivial.

\paragraph{Magnus' theorem:} Using Lazard's theorem, we can get a surjection of $\Lie(F_n)$ onto the Lie ring generated in degree one inside $\gr\left(\widehat{T[n]}\right) \cong T[n]$, which is the free Lie ring $\mathfrak L[n]$ on $n$ generators. Using freeness, one shows that this surjection has to be injective as well:
\[\Lie(F_n) \cong \mathfrak L[n].\]

\paragraph{The Andreadakis problem and the Johnson morphism:} In the case of the free group, the Johnson morphism defines an embedding of $\Lie(\mathcal A_*(F_n))$ into the Lie ring of derivations of the free Lie ring.

The Andreadakis problem for automorphisms of free groups is a difficult problem. The two filtrations were first conjectured to be equal \cite[p.\  253]{Andreadakis}. This was disproved very recently \cite{Bartholdi}, but the methods used do not give a good understanding of what is going on. The Andreadakis equality is known to hold for certain well-behaved subgroups, such as the pure braid group $P_n$ \cite{Satoh_triangulaire, Darne2}. However, the problem stays largely open in general. In particular, it is open for the group $P\Sigma_n$ of basis-conjugating automorphisms (that is, for the group of pure welded braids), of which our group $hP\Sigma_n$ is a simpler version.

\section{The reduced free group and its Lie algebra}\label{Section1}

In this first section, we introduce and study the \emph{reduced free group}, which was first introduced by Milnor \cite{Milnor-LG} as the link group of the trivial link with $n$ components. Using the Magnus expansion defined in \cite{Milnor-LG}, we determine its Lie ring. 

\begin{nota}
Several of our constructions are functors on the category of sets. For such a functor $\Phi$, we denote by $\Phi[X]$ its value at a set $X$. When $X$ is finite with $n$ elements, we will often denote $\Phi[X]$ by $\Phi[n]$, or by $\Phi_n$.
\end{nota}

\subsection{The reduced free group}

\begin{defi}\label{def1_RF}
The \emph{reduced free group} on a set $X$ is the group defined by the following presentation:
\[RF[X]:= \langle X \ |\ \forall x \in X, \forall w \in F[X],\ [x,x^w] = 1 \rangle.\]
This means that it is the largest group generated by $X$ such that each element of $X$ commutes with all its conjugates.
\end{defi}

Since any $x$ commutes with itself, the relations $[x,x^w]$ of Definition~\ref{def1_RF} can also be written $[x,[x,w]]$. The next result and its proof are taken from \cite[Lem. 1.3]{Habegger-Lin}:

\begin{prop}\label{RF_is_nilp}
For any integer $n$, the group $RF_n$ is $n$-nilpotent. For any set $X$, the group $RF[X]$ is residually nilpotent. 
\end{prop}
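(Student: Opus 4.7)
The plan is to prove $n$-nilpotency of $RF_n$ by induction on $n$, and then deduce residual nilpotency of $RF[X]$ by a retraction argument. The base case $n=1$ is trivial since $RF_1 = \mathbb{Z}$ is abelian. For the inductive step, I would first exhibit the semi-direct product decomposition $RF_n \cong N \rtimes RF_{n-1}$, where $N := \langle\langle x_n \rangle\rangle$ is the normal closure of $x_n$: the retraction $RF_n \twoheadrightarrow RF_{n-1}$ sending $x_n \mapsto 1$ is split by the inclusion of $RF_{n-1}$ as the subgroup generated by $x_1,\dots,x_{n-1}$. The defining relation $[x_n, x_n^w] = 1$ combined with a conjugation argument forces any two conjugates of $x_n$ to commute in $RF_n$, so $N$ is abelian.

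Since $x_n \in N$ and $N$ is abelian, conjugation by $x_n$ acts trivially on $N$, so the $RF_n$-action on $N$ factors through $B := RF_{n-1}$, making $N$ a cyclic $\mathbb{Z}[B]$-module generated by $x_n$. The standard formula for the lower central series of a semi-direct product with abelian kernel gives $\Gamma_k(RF_n) = I^{k-1} N \cdot \Gamma_k(B)$, where $I$ is the augmentation ideal of $\mathbb{Z}[B]$. Under the inductive hypothesis $\Gamma_n(B) = 1$, the $n$-nilpotency of $RF_n$ reduces to the purely module-theoretic assertion $I^n N = 0$.

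The defining relations $[x_i, [x_i, x_n^g]] = 1$ (for $i<n$ and $g \in F_{n-1}$) translate, in the module $N$, to the identity $(x_i - 1)^2 \cdot y = 0$ for every $y \in N$ and every $i<n$. The hard step, which is the main obstacle, is to bootstrap these ``square-annihilation'' relations, together with the inductive nilpotency of $B$, into $I^n N = 0$: by pigeonhole, any $n$-fold product $(x_{i_1}-1)\cdots(x_{i_n}-1)$ has a repeated index, and bringing the two equal factors adjacent using the identity $(x_a - 1)(x_b - 1) - (x_b - 1)(x_a - 1) = ([x_a, x_b] - 1) \cdot x_b x_a \in I^2 \mathbb{Z}[B]$ produces corrections of strictly higher $I$-degree, which one iterates and absorbs using the $(n-1)$-nilpotency of $B = RF_{n-1}$ to land in the annihilator of $N$.

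Finally, residual nilpotency of $RF[X]$ is immediate: any nontrivial $g \in RF[X]$ is a word in a finite subset $Y \subseteq X$ of size $n$, and the retraction $RF[X] \twoheadrightarrow RF[Y] \cong RF_n$ (killing generators outside $Y$), being split by the inclusion $RF[Y] \hookrightarrow RF[X]$, sends $g$ to a nontrivial element. Since $\Gamma_{n+1}(RF_n) = 1$ by the first part, this image lies outside $\Gamma_{n+1}(RF_n)$, forcing $g \notin \Gamma_{n+1}(RF[X])$, whence $\bigcap_k \Gamma_k(RF[X]) = 1$.
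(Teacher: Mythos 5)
Your overall architecture (induction on $n$, the splitting $RF_n \cong \mathcal N(x_n) \rtimes RF_{n-1}$ with abelian kernel, the formula $\Gamma_k(RF_n) = I^{k-1}N \cdot \Gamma_k(RF_{n-1})$, and the reduction to $I^nN=0$) is sound, and your residual-nilpotency argument is exactly the paper's. But the step you yourself flag as the main obstacle contains a genuine error: the correction term produced by an adjacent transposition is \emph{not} of strictly higher $I$-adic degree. Swapping two degree-one factors $(x_a-1)(x_b-1)$ inside an $n$-fold product replaces them by the single factor $([x_a,x_b]-1)x_bx_a \in I^2$, so the corrected term still lies in $I^n$ and not in $I^{n+1}$: total degree is preserved, not raised. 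Worse, to re-apply your pigeonhole you must re-expand $[x_a,x_b]-1$ back into degree-one factors, which restores the original length, so the rewriting loops. At best this argument shows $I^nN \subseteq I^{n+1}N$, which gives nothing without an a priori bound on the nilpotency of $N$ as a module --- and that is precisely what is being proved. The step can be repaired, but it needs a different idea: strengthen $(x_i-1)^2N=0$ to $J_i^2N=0$, where $J_i$ is the kernel of $\Z[B] \to \Z[B/\mathcal N(x_i)]$ (this follows from the same computation, since $[x_i^h,u]$ and more generally $[z,u]$ for $z \in \mathcal N(x_i)$ lands in the abelian group $\mathcal N(x_i)$). Because each $J_i$ is a \emph{two-sided} ideal containing $x_i-1$, one gets $I^n \subseteq \sum J_{i_1}\cdots J_{i_n} \subseteq \sum_i J_i\,\Z[B]\,J_i = \sum_i J_i^2$ by pigeonhole, with no commutation of factors needed at all.

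You should also know that the paper's proof avoids all of this module theory. Instead of the single projection killing $x_n$, it uses \emph{all} the projections $p_x: RF[X] \to RF[X-\{x\}]$ at once: their product has kernel $\bigcap_x \mathcal N(x)$, and the induction hypothesis forces $\Gamma_n(RF[X])$ into this kernel. Since every element of $\mathcal N(x)$ commutes with $x$ (by the defining relations), anything in $\bigcap_x \mathcal N(x)$ commutes with every generator, hence is central; so $\Gamma_{n+1} = [RF[X],\Gamma_n] \subseteq [RF[X],\mathcal Z(RF[X])] = 1$. This two-line centrality observation replaces your entire computation of $I^nN$, so if you pursue your route you should either adopt the $J_i^2N=0$ fix above or switch to the paper's argument.
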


\begin{proof}
We use the fact that $RF[-]$ is a functor on pointed sets. First, for a finite set $X$, we show by induction on $n = |X|$ that $RF_n= RF[X]$ is $n$-nilpotent. This is obvious for $n = 1$, because $RF_1 \cong \Z$. Suppose that $RF_{n-1}$ is $(n-1)$-nilpotent. If $x \in X$, the normal subgroup $\mathcal N(x)$ of $RF[X]$ generated by $x$ is the kernel of the projection $p_x$ from $RF[X]$ to $RF[X-\{x\}]$ sending $x$ to $1$. We have an exact sequence:
\[\begin{tikzcd} 1 \ar[r] &\bigcap\limits_{x \in X} \mathcal N(x) \ar[r] &RF[X] \ar[r, "p = (p_x)"] &\prod\limits_{x \in X} RF[X-\{x\}]. \end{tikzcd}\]
Since the group on the right is $(n-1)$-nilpotent by induction hypothesis, the morphism $p$ must send $\Gamma_n(RF[X])$ to $1$, so that $\Gamma_n(RF[X])$ is inside the kernel $\bigcap \mathcal N(x)$. Moreover, by definition of the reduced free group, for every $x \in X$, all elements of $\mathcal N(x)$ commute with $x$. Thus, an element of $\bigcap \mathcal N(x)$ commutes with all $x \in X$, so it is in the center $\mathcal Z(RF[X])$. As a conclusion, $\Gamma_n(RF[X]) \subseteq \mathcal Z(RF[X])$, which means exactly that $RF[X]$ is $n$-nilpotent.

Suppose now $X$ infinite. Let $w$ be an element of $RF[X]$. It can be written as a product of a finite number of elements of $X$ and their inverses. Denote be $W$ such a finite subset of $X$. Then $w$ is inside the image of the canonical injection $RF[W] \hookrightarrow RF[X]$, which is split by the projection from $RF[X]$ to $RF[W]$ sending $X - W$ to $1$. Since $RF[W]$ is $|W|$-nilpotent, this construction provides a nilpotent quotient of $RF[X]$ in which the image of $w$ is non-trivial, whence the residual nilpotence of $RF[X]$.
\end{proof}

\subsection{The reduced free algebra}\label{par_A[Y]}

\begin{defi}\label{def_of_A}
Let $Y$ be a set. If $s \geq 2$ is an integer, let us define $\Delta_s (Y)$ by:
\[\Delta_s (Y):= \{(y_i) \in Y^s\ |\ \exists i \neq j,\ y_i = y_j \}.\] 
The \emph{reduced free algebra} on $Y$ is the unitary associative ring defined by the following presentation:
\[A[Y]:= \langle Y \ |\ \forall s,\ \forall (y_i) \in \Delta_s (Y),\ y_1 \cdots y_s = 0 \rangle.\]
\end{defi}

For short, we often forget the mention of $Y$ when it is clear from the context, and write only $A$ for $A[Y]$.

\begin{fait}\label{basis_of_A}
The algebra $A[Y]$ is graded by the degree of monomials. As a $\Z$-module, $A[Y]$ is a direct factor of the tensor algebra $T[Y]$; a (finite) basis of $A[Y]$ is given by \emph{monomials without repetition} on the generators $y \in Y$,  which are monomials of the form $y_1 \cdots y_s$ with $(y_i) \notin \Delta_s (Y)$. 
\end{fait}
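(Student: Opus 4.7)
The plan is to work directly with the defining presentation of $A[Y]$ as a quotient of the tensor algebra $T[Y]$. Let $M \subseteq T[Y]$ be the $\Z$-submodule spanned by monomials with a repetition, and let $A' \subseteq T[Y]$ be the $\Z$-submodule spanned by monomials without repetition. Then $T[Y] = M \oplus A'$ as $\Z$-modules, since the set of all monomials (with or without repetition) is a $\Z$-basis of $T[Y]$, and this basis splits cleanly along the two cases. Note that both $M$ and $A'$ are graded by degree. The claim reduces to the identification $I = M$, where $I$ denotes the two-sided ideal of $T[Y]$ generated by the relators $y_1 \cdots y_s$ with $(y_i) \in \Delta_s(Y)$.

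First, I would observe that the relators themselves are monomials with a repetition, hence lie in $M$. Since any product $u \cdot (y_1 \cdots y_s) \cdot v$ of such a relator with monomials $u, v$ is again a monomial with a repetition, every generator of $I$ (as a two-sided ideal) lies in $M$, and $M$ is stable under $\Z$-linear combinations, so $I \subseteq M$. Conversely, any monomial with a repetition $y_{i_1} \cdots y_{i_n}$ is itself (with $u = v = 1$ and $s = n$) one of the relators of Definition~\ref{def_of_A}, hence lies in $I$; taking $\Z$-linear combinations gives $M \subseteq I$. Hence $I = M$.

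It follows immediately that $A[Y] = T[Y]/M$ is canonically isomorphic to the complementary summand $A'$ via the projection $T[Y] \twoheadrightarrow A'$ killing $M$; this makes $A[Y]$ a direct factor of $T[Y]$ as a $\Z$-module, with basis the images of monomials without repetition. Since $Y$ is finite and a monomial without repetition has length at most $|Y|$, this basis is finite. Finally, the grading on $A[Y]$ by monomial degree is well-defined because $M$ (equivalently, $I$) is a homogeneous submodule: it is spanned by homogeneous elements (monomials).

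The only mildly delicate point is the equality $I = M$, and the key observation that makes it trivial is that the relators have the very special form of being \emph{monomials}, so closing under the two-sided ideal operation does not enlarge $M$. Once this is noted, everything else is automatic from the decomposition of $T[Y]$ along its standard monomial basis, so I do not anticipate any real obstacle.
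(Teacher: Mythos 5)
Your proof is correct and follows essentially the same route as the paper: both identify the ideal of relations with the $\Z$-span of monomials with repetition (the paper calls it $R$, you call it $M$), observe it is a homogeneous ideal, and split $T[Y]$ as the direct sum of this submodule and the span of monomials without repetition. You merely spell out in more detail the observation the paper dismisses as "clear," namely that the two-sided ideal generated by the relators does not exceed their linear span because the relators are themselves monomials.
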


\begin{proof}
Let $R$ be the (free) $\Z$-submodule of $T[Y]$ generated by the $y_1 \cdots y_s$ such that $(y_i) \in \Delta_s (Y)$ (monomials \emph{with repetition}). This module is clearly a homogeneous ideal of $T[Y]$. As a consequence, $A = T/R$. Moreover, if we denote by $S$ the (free) $\Z$-submodule of $T$ generated by monomials without repetition, then $T = S \oplus R$ as a $\Z$-module, so that $A \cong S$.
\end{proof}

\begin{defi}\label{def_of_RL}
Let $Y$ be a set. The \emph{reduced free Lie algebra} on $Y$ is the Lie algebra defined by the following presentation:
\[R \mathfrak L[Y]:= \langle Y \ |\ \forall s,\ \forall (y_i) \in \Delta_s (Y),\ [y_1, ..., y_s] = 0 \rangle,\]
where $[y_1, ..., y_s]$ denotes $[y_1, [y_2, [ \cdots [y_{s-1}, y_s]] \cdots]$.
\end{defi}

The following result uses some of the combinatorics of the free Lie ring recalled in the appendix:

\begin{prop}\label{Prim(A)}
The Lie subalgebra of $A[Y]$ generated by $Y$ identifies with $R \mathfrak L[Y]$.
\end{prop}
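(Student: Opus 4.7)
The plan is to construct the natural Lie algebra map $\phi : \mathfrak L[Y] \to A[Y]$ from the universal property of the free Lie algebra (with $A[Y]$ viewed as a Lie algebra under its commutator bracket) and to analyse its kernel by means of the classical Magnus embedding $\mathfrak L[Y] \hookrightarrow T[Y]$. By construction the image of $\phi$ is precisely the Lie subalgebra $L$ of $A[Y]$ generated by $Y$, so the proposition will follow once $\phi$ is shown to factor through $R\mathfrak L[Y]$ and the factored map $\bar\phi : R\mathfrak L[Y] \to A[Y]$ is shown to be injective.

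For the factorization, I would verify each defining relator of $R\mathfrak L[Y]$: expanding a right-normed bracket $[y_1,y_2,\dots,y_s]$ inside $A[Y]$ produces a signed sum of associative monomials, each of which is a permutation of $y_1 y_2 \cdots y_s$ and therefore shares its multi-degree. When $(y_i) \in \Delta_s(Y)$ that multi-degree contains a repeated letter, so every such monomial vanishes in $A[Y]$ by Definition \ref{def_of_A}, and the bracket is zero. This gives the desired surjection $\bar\phi : R\mathfrak L[Y] \twoheadrightarrow L$.

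For injectivity, I would factor $\phi$ as $\mathfrak L[Y] \hookrightarrow T[Y] \twoheadrightarrow A[Y]$, the second map being the canonical projection with kernel $R$ supplied by Fact \ref{basis_of_A}, so that $\ker(\phi) = \mathfrak L[Y] \cap R$ computed inside $T[Y]$. All four modules carry the natural multi-grading by $\mathbb N^Y$; on a multi-degree component $M$ one has $R_M = 0$ if $M$ has no repeated letter, and $T[Y]_M \subseteq R$ otherwise. Consequently
\[\ker(\phi) = \bigoplus_{M \text{ with repetition}} \mathfrak L[Y]_M.\]
Calling $I$ the Lie ideal of $\mathfrak L[Y]$ defining $R\mathfrak L[Y]$, the inclusion $I \subseteq \ker(\phi)$ is automatic from the factorization. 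For the reverse inclusion I would invoke the standard fact, a direct Jacobi induction presumably recalled in the appendix, that each multi-degree component $\mathfrak L[Y]_M$ is spanned as a $\mathbb Z$-module by right-normed brackets $[y_{i_1},\dots,y_{i_s}]$ with $y_{i_1}\cdots y_{i_s}$ of multiset $M$. When $M$ has a repeated letter every such spanning bracket is a defining relator of $R\mathfrak L[Y]$, hence lies in $I$, so $\mathfrak L[Y]_M \subseteq I$. Combining gives $\ker(\phi) = I$ and hence injectivity of $\bar\phi$.

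The conceptual obstacle is genuinely this last step: passing from the statement that every monomial in the expansion of a kernel element has a repetition to the statement that the element itself lies in the Lie ideal generated by the right-normed relators with repetition. The multi-degree plus right-normed spanning argument above seems the cleanest way to bridge this gap without an \emph{ad hoc} combinatorial manipulation of Lyndon brackets; the alternative, via the Lyndon basis, would identify $R\mathfrak L[Y]$ directly as the free $\mathbb Z$-module on Lyndon words without repetition, at the cost of invoking the leading-monomial property of Lyndon brackets.
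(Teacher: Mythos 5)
Your proof is correct, and its global strategy -- identifying the kernel of the canonical map $\mathfrak L[Y] \to A[Y]$ with the ideal defining $R\mathfrak L[Y]$ -- coincides with the paper's. The easy inclusion (relators of $R\mathfrak L[Y]$ die in $A[Y]$ because expanding a bracket only produces monomials with the same multiset of letters) is argued identically. Where you genuinely diverge is on the converse inclusion $\mathfrak L[Y]\cap R \subseteq I$. The paper decomposes an element of $R\cap\mathfrak L[Y]$ on the Lyndon basis and runs an induction that peels off the smallest Lyndon word, using the leading-monomial property (Lemma \ref{gr(P)=Id}) to see that this word must carry a repetition, and Lemma \ref{lem_letters} to identify the defining ideal with the span of all Lie monomials with repetition. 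You instead use the $\mathbb N^{Y}$-multigrading to split the kernel as $\bigoplus_{M}\mathfrak L[Y]_M$ over multidegrees $M$ with a repeated letter, and then invoke the right-normed spanning statement -- which is precisely the first half of Lemma \ref{lem_letters} in the appendix, proved there by the Jacobi rewriting you anticipate -- to see that each such component is spanned by defining relators. Your route is arguably cleaner: it replaces the leading-term induction by a grading argument and needs only that $\iota:\mathfrak L[Y]\to T[Y]$ is injective and multigraded, not the full Lyndon basis. Two small caveats: in this paper the injectivity of $\iota$ is itself obtained from the Lyndon machinery (Prop.\ \ref{Lyndon_words_free} and Theorem \ref{Lyndon_basis}), so that dependence is displaced rather than removed; and ``Magnus embedding'' is not the standard name for $\iota$ -- it is the canonical map of the free Lie ring into its enveloping algebra, injective by Poincar\'e--Birkhoff--Witt over $\Z$ (or, here, by Corollary \ref{Prim(TV)}).
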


\begin{proof}
We need to prove that the intersection of the ideal $R$ of relations defining  $A[Y]$ and the free Lie algebra $\mathfrak L[Y] \subset T[Y]$  is exactly the module $S$ of relations defining $R\mathfrak L[Y]$. The inclusion of $S$ into $R$ is clear: when we decompose a relation in $S$ on the basis of $TV$, only monomials with exactly the same letters appears, counting repetitions. For the converse, let us first remark that thanks to Lemma \ref{lem_letters}, $S$ is the submodule of $\mathfrak L[Y]$ generated by all Lie monomials with repetition. Let $p \neq 0$ be an element of  $R \cap \mathfrak L[Y]$, and let us consider its decomposition $p = \sum \lambda_w P_w$ on the Lyndon basis of $\mathfrak L[Y]$. Let $w$ be the smaller Lyndon word such that $\lambda_w \neq 0$. It follows from Lemma \ref{gr(P)=Id} that $\lambda_w$ must be the coefficient of $w$ in the decomposition of $p$ into a linear combination of monomials of $TV$. Since $p \in R$, the word $w$ must be with repetition, so that $P_w \in S$. Then $p -  \lambda_w P_w \in R \cap \mathfrak L[Y]$ has less terms than $p$ in its decomposition on the Lyndon basis, giving us the result by induction.
\end{proof}

\begin{rmq}\label{finite_pstation_RL}
When $Y$ is a finite set with $n$ elements, we can extract finite presentations from the above presentations. Indeed, the ideal $R$ and the Lie ideal $S$ are both generated in degrees at most $n+1$, since $R_{n+1} = T[n]_{n+1}$ and $S_{n+1} = \mathfrak L[n]_{n+1}$ (a word of length $n+1$ must possess at least a repetition). As a consequence, the relations of degree at most $n+1$ are enough do describe $A[n]$ (resp. $R\mathfrak L[n]$), and there are finitely many of them.
\end{rmq}

\begin{prop}\label{rk_of_RL}
Lyndon monomials \emph{without} repetition on the $y_i$ are a basis of $R\mathfrak L[Y]$. The rank of the degree-$k$ part $R\mathfrak L[n]_k$ of $R\mathfrak L[n]$ is $(k-1)! \binom{n}{k}$.
\end{prop}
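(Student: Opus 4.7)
The plan is to read off both claims from the proof of Proposition \ref{Prim(A)} together with a standard cyclic-rotation count of Lyndon words.

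First I would argue that the Lyndon basis of $\mathfrak{L}[Y]$ splits cleanly with respect to the ideal $S$. Recall from the proof of Proposition \ref{Prim(A)} that $S$ is the submodule of $\mathfrak{L}[Y]$ spanned by all Lie monomials with repetition; in particular every Lyndon basis element $P_w$ attached to a Lyndon word $w$ \emph{with} repetition lies in $S$. The induction in the same proof (using Lemma \ref{gr(P)=Id}) shows conversely that if $p \in S$ is decomposed on the Lyndon basis, then only Lyndon words with repetition can appear with nonzero coefficient: one peels off the smallest Lyndon word $w$ occurring, observes that $\lambda_w$ equals the coefficient of $w$ in the monomial expansion of $p$, and deduces that $w$ must have a repetition. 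Thus $S$ is \emph{freely} generated by $\{P_w \mid w \text{ Lyndon with repetition}\}$, and so $\mathfrak{L}[Y] = S \oplus M$, where $M$ is the free submodule generated by $\{P_w \mid w \text{ Lyndon without repetition}\}$. Since $R\mathfrak{L}[Y] = \mathfrak{L}[Y]/S$, the images of the $P_w$ for $w$ a Lyndon word without repetition form a $\mathbb{Z}$-basis of $R\mathfrak{L}[Y]$. This is exactly the first assertion.

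For the rank formula, I would count Lyndon words of length $k$ on $Y = \{y_1, \dots, y_n\}$ having pairwise distinct letters. Such a word is a permutation of some $k$-element subset of $Y$, so I first choose the underlying subset in $\binom{n}{k}$ ways. On a fixed set of $k$ distinct letters there are $k!$ words of length $k$, and since the letters are distinct each word has trivial cyclic stabilizer; hence the $k!$ words partition into $k!/k = (k-1)!$ cyclic orbits, each of which contains exactly one Lyndon word (the lexicographically smallest rotation). Multiplying gives $(k-1)!\binom{n}{k}$ Lyndon words without repetition of length $k$, which by the first part is the rank of $R\mathfrak{L}[n]_k$.

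The only step that requires any care is the first one, i.e.\ making sure that the argument of Proposition \ref{Prim(A)} really yields the stronger splitting statement and not merely that $S \cap \mathfrak{L}[Y] = R \cap \mathfrak{L}[Y]$; but this is essentially immediate from the induction on the Lyndon expansion as recalled above. The counting step is routine combinatorics and presents no obstacle.
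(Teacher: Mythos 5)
Your proof is correct and follows essentially the same route as the paper: identify $S$ with the free submodule spanned by Lyndon monomials with repetition (the paper gets this in one line from Lemma \ref{lem_letters}, you re-run the ``peel off the smallest Lyndon word'' induction via Lemma \ref{gr(P)=Id}, which is the same mechanism), then count Lyndon words without repetition. The only cosmetic difference is in the count, where you use the cyclic-orbit/necklace argument while the paper observes directly that such a word is Lyndon iff it begins with its smallest letter; both give $(k-1)!\binom{n}{k}$.
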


\begin{proof}
Lemma \ref{lem_letters} implies that the module $S$ in the proof of Prop. \ref{Prim(A)} is the submodule generated by all Lyndon monomials with repetition, which are thus a basis of $S$. As a consequence, Lyndon monomials \emph{without} repetition give a basis of the quotient $R\mathfrak L[Y] = \mathfrak L[Y]/S$. 

In order to determine the ranks, we need to count Lyndon words without repetition of length $k$ in $y_1, ..., y_n$. A word without repetition is Lyndon if and only if its first letter is the smallest one. Such a word is determined by the choice of $k$ letters, and a choice of ordering of the $(k-1)$ letters left when the smallest one is removed. This gives $(k-1)! \binom{n}{k}$ such words, as announced.
\end{proof}

\begin{prop}\label{RF_to_A}
In $A[Y]^\times$, each element of $1+Y$ commutes with all its conjugates.
\end{prop}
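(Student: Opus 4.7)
The plan is to reduce the claim to an elementary ``sandwich identity'' in the reduced free algebra: for every $y \in Y$ and every $a \in A[Y]$, one has $y\,a\,y = 0$. To prove this identity I would write $a$ as a $\mathbb Z$-linear combination of monomials in the generators (using for instance Fact \ref{basis_of_A}), and then observe that for any such monomial $m$ the product $y\,m\,y$ is a monomial of $T[Y]$ in which the letter $y$ appears at both ends, hence at least twice; such a monomial is zero in the quotient $A[Y]$ by the defining relations. Summing over the monomials appearing in $a$ gives $y\,a\,y = 0$.

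Granted the sandwich identity, the proposition follows by direct expansion. Any conjugate of $1+y$ in $A[Y]^\times$ has the form $u^{-1}(1+y)u = 1+u^{-1}yu$ for some $u \in A[Y]^\times$. Computing both orders of multiplication yields
\begin{align*}
(1+y)\bigl(1+u^{-1}yu\bigr) &= 1 + y + u^{-1}yu + (y\,u^{-1}\,y)\,u, \\
\bigl(1+u^{-1}yu\bigr)(1+y) &= 1 + u^{-1}yu + y + u^{-1}\,(y\,u\,y),
\end{align*}
and the two remaining cross-terms vanish by the sandwich identity applied with $a=u^{-1}$ and $a=u$ respectively. Thus $1+y$ and $u^{-1}(1+y)u$ commute.

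No step here presents a real obstacle: the whole argument rests on noticing that in $A[Y]$ any element is annihilated on both sides by the same generator, after which the commutation becomes a one-line calculation. The only mild point to keep track of is to work with an honest monomial expansion of $a$ so that the ``two $y$'s at the extremities'' argument applies term by term.
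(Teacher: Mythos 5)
Your proof is correct and follows essentially the same route as the paper: the paper also reduces the claim to the identity $yAy=0$ (which holds for exactly the reason you give, since any monomial flanked by $y$ on both sides has a repeated letter) and then checks commutation of $1+y$ with $1+uyu^{-1}$ by direct expansion.
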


\begin{proof}
Let $y$ be an element of $Y$. From the relation $y^2=0$, we deduce that $1+y$ is invertible, with $1-y$ as its inverse. Let $u \in A^\times$. Then $u(1+y)u^{-1} = 1 + uyu^{-1}$. Since $yAy = 0$, we can write:
\[(1+y)(1 + uyu^{-1}) = 1 + y + uyu^{-1} = (1 + uyu^{-1})(1+y),\]
which is the desired conclusion.
\end{proof}

\begin{nota}
From now on, we denote by $X$ and $Y$ two sets endowed with a bijection $X \cong Y$ that we will denote by $x_i \mapsto y_i$ (we consider both $X$ and $Y$ indexed by a bijection from a set of indices $I$). This notation will allow us to distinguish between the group-theoretic world and its algebraic counterpart. 
\end{nota}
From Proposition \ref{RF_to_A}, we get a well-defined morphism, which is an analogue of the Magnus expansion, and was introduced by Milnor \cite[\S 4]{Milnor-LG}:
\begin{equation}\label{canonical_inj}
\mu: \left\{\begin{array}{clc}
                 RF[X] &\longrightarrow &A[Y]^\times \\
                 x_i &\longmapsto &1+y_i.  
             \end{array}
      \right.
\end{equation}
From Lazard's theorem \cite[Th.\ 3.1]{Lazard} (see also \cite[Th.\ 1.36]{Darne1}), we get an associated morphism between graded Lie algebras:
\begin{equation}\label{canonical_inj_gr}
\overline \mu: \left\{\begin{array}{clcl}
                         \Lie(RF[X]) &\longrightarrow &\gr(A[Y]) &\cong A[Y] \\
                         \bar x_i &\longmapsto &y_i. &  
                     \end{array}
              \right.
\end{equation}
  
From this we deduce our first main theorem:  
\begin{theo}\label{L(RF) = LR}
The above morphism \eqref{canonical_inj_gr} induces a canonical isomorphism between the Lie algebra of the reduced free group and the reduced free algebra:
\[\Lie(RF[X]) \cong R\mathfrak L[Y].\]
\end{theo}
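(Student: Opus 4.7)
The plan is to show that $\overline \mu$ is an isomorphism by constructing an explicit inverse. First I would identify the image of $\overline \mu$: since it is a Lie ring morphism $\Lie(RF[X]) \to \gr(A[Y]) \cong A[Y]$ sending each generator $\bar x_i$ to $y_i$, its image is the Lie subalgebra of $A[Y]$ generated by the $y_i$, which by Proposition \ref{Prim(A)} equals $R\mathfrak L[Y]$. This gives a surjective Lie morphism $\overline \mu : \Lie(RF[X]) \twoheadrightarrow R\mathfrak L[Y]$, and it remains only to prove injectivity.

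For this, I would construct an inverse $\phi : R\mathfrak L[Y] \to \Lie(RF[X])$ sending $y_i \mapsto \bar x_i$. Well-definedness reduces, via the presentation in Definition \ref{def_of_RL}, to showing that every right-normed Lie monomial $[\bar x_{i_1}, [\bar x_{i_2}, \ldots, [\bar x_{i_{s-1}}, \bar x_{i_s}]\ldots]]$ with a repeated index vanishes in $\Lie(RF[X])$. I would prove this by lifting to the group and using Proposition \ref{RF_is_nilp}: set $W = \{x_{i_1}, \ldots, x_{i_s}\}$; the iterated commutator $[x_{i_1}, [x_{i_2}, \ldots, [x_{i_{s-1}}, x_{i_s}]\ldots]]$ lies in the split subgroup $RF[W] \hookrightarrow RF[X]$, and a repetition forces $|W| \leq s-1$. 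Hence this commutator sits in $\Gamma_s(RF[W]) \subseteq \Gamma_{|W|+1}(RF[W]) = 1$ by the $|W|$-nilpotence of $RF[W]$, so it is trivial in the group and its class vanishes in the Lie ring.

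With $\phi$ defined, both $\overline \mu$ and $\phi$ are Lie morphisms interchanging the canonical generating sets, so the composites fix those generators and must equal the identity, which yields the isomorphism. The main subtlety is precisely the vanishing of right-normed Lie monomials with repetition in $\Lie(RF[X])$: without the global nilpotency provided by Proposition \ref{RF_is_nilp}, one would be forced into a Jacobi-based reduction of arbitrary right-normed monomials with repetition to the weaker relations $\ad(\bar x_i)^2 = 0$ coming from $[x, x^w] = 1$, whereas the subgroup-nilpotence argument bypasses this combinatorial step in a single line of index counting.
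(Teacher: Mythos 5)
Your proposal is correct, and its overall architecture is the same as the paper's: surjectivity of $\overline\mu$ onto $R\mathfrak L[Y]$ from degree-one generation together with Proposition \ref{Prim(A)}, then injectivity by checking that $y_i \mapsto \bar x_i$ is a well-defined section, which reduces to the vanishing in $\Lie(RF[X])$ of right-normed Lie monomials with a repeated letter. Where you genuinely diverge is in the proof of that vanishing. The paper stays entirely inside the defining relations of $RF[X]$: writing the repeated monomial as $[x_{i_1},\ldots,x_{i_{j-1}},[x_{i_j},u]]$ where the inner factor $u$ again contains the repeated letter, one has $u \in \mathcal N(x_{i_j})$, and $x_{i_j}$ commutes with its whole normal closure, so the inner bracket $[x_{i_j},u]$ is already trivial in the group. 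You instead restrict to the subgroup $RF[W]$ on the set $W$ of distinct letters, observe $|W| \leq s-1$, and kill the length-$s$ commutator by the $|W|$-nilpotence of Proposition \ref{RF_is_nilp}. Both arguments are valid and non-circular (Proposition \ref{RF_is_nilp} precedes the theorem and is independent of it); your index count is arguably the quicker line to write, while the paper's sub-commutator trick is the more primitive one, since the nilpotence statement you invoke is itself proved from exactly the commutation fact $[x,\mathcal N(x)]=1$ that the paper uses directly. Your closing remark that one would otherwise be ``forced into a Jacobi-based reduction'' is therefore slightly overstated --- the paper's own argument avoids both the Jacobi reduction and the appeal to global nilpotence.
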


\begin{proof}
Since $\Lie(RF[X])$ is generated in degree $1$ \cite[Prop. 1.19]{Darne1} (that is, generated by the $\bar x_i$), the morphism \eqref{canonical_inj_gr} defines a surjection from $\Lie(RF[X])$ onto the Lie subalgebra of $A$ generated by $Y$, which is $R\mathfrak L[Y]$ (Proposition \ref{Prim(A)}). But $\Lie(RF[X])$ is a reduced Lie algebra on $X$, by which we mean that the relations on the $y_i$ defining $R\mathfrak L[Y]$ are true for the classes $\bar x_i$. Indeed, in $RF[X]$, the normal closure $\mathcal N(x)$ of a generator $x \in X$ is commutative. As a consequence, if $u$ is any element of $\mathcal N(x)$, then $[x,u] = 1$. Applying this to $u = [x_{r+1}, ..., x_s, x,w] \in \mathcal N(x)$ (where our notation for iterated commutators is the same as above for iterated brackets in Lie algebras), we see that any $[x_1, ..., x_r, x, x_{r+1}, ..., x_s, x,w]$ is trivial in the group, hence so is its class in the Lie algebra. Thus $y_i \mapsto \overline x_i$ defines a section of our surjection, which has to be injective.
\end{proof}

\begin{cor}\label{RF_in_A}
The morphism $\mu: x_i \mapsto 1+y_i$ \eqref{canonical_inj} from $RF[X]$ to $A[Y]^\times$ is injective.
\end{cor}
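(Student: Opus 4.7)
The plan is a classical residual-nilpotence argument: $\mu$ is compatible with strongly central filtrations on both sides, it induces an injection on associated graded Lie algebras by Theorem~\ref{L(RF) = LR}, and the source is residually nilpotent by Proposition~\ref{RF_is_nilp}.

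More concretely, I would first set $H_i := A[Y]^\times \cap (1 + A[Y]_{\geq i})$, which by Lazard's theorem is a strongly central filtration on $H_1 = A[Y]^\times \cap (1 + A[Y]_{\geq 1})$ whose associated graded Lie ring embeds into $\gr(A[Y]) \cong A[Y]$. Since $\mu(x_i) = 1 + y_i \in H_1$ and $RF[X]$ is generated by the $x_i$, an immediate induction using strong centrality of $H_*$ shows $\mu(\Gamma_i(RF[X])) \subseteq H_i$ for all $i \geq 1$. Thus $\mu$ induces a morphism of graded Lie rings
\[
\overline{\mu} : \Lie(RF[X]) \longrightarrow \Lie(H_*) \longhookrightarrow \gr(A[Y]) \cong A[Y],
\]
which is exactly the morphism appearing in \eqref{canonical_inj_gr}. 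By Theorem~\ref{L(RF) = LR}, $\overline\mu$ identifies $\Lie(RF[X])$ with $R\mathfrak L[Y] \subseteq A[Y]$; in particular it is injective.

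Now suppose $g \in \ker \mu$ is non-trivial. By residual nilpotence of $RF[X]$ (Proposition~\ref{RF_is_nilp}), there is a unique integer $d \geq 1$ such that $g \in \Gamma_d(RF[X]) \setminus \Gamma_{d+1}(RF[X])$, so that the class $\overline g \in \Lie_d(RF[X])$ is non-zero. But $\mu(g) = 1$ forces $\overline\mu(\overline g) = 0$, contradicting the injectivity of $\overline\mu$ established above. Hence $\ker \mu = \{1\}$, which is the desired conclusion.

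The only mildly delicate point is the compatibility statement $\mu(\Gamma_i) \subseteq H_i$, but this is automatic from strong centrality of $H_*$ together with the fact that $\Gamma_*(RF[X])$ is the \emph{minimal} strongly central filtration on $RF[X]$, so there is essentially no real obstacle here. Everything else is formal once Theorem~\ref{L(RF) = LR} and the residual nilpotence of $RF[X]$ are in hand.
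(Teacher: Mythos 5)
Your proposal is correct and follows essentially the same route as the paper: pass to the associated graded via the Lazard filtration on $A[Y]^\times$, invoke the injectivity (in fact isomorphism onto $R\mathfrak L[Y]$) of $\overline\mu$ from Theorem~\ref{L(RF) = LR}, and use residual nilpotence of $RF[X]$ to find a non-trivial class $\overline g$ that would have to map to zero. The extra care you take with the compatibility $\mu(\Gamma_i)\subseteq H_i$ is exactly what the paper delegates to Lazard's theorem when it defines $\overline\mu$ in \eqref{canonical_inj_gr}.
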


\begin{proof}
Let $w$ be an element of $\ker(\mu)$. If $w \neq 1$, then, by residual nilpotence of $RF[X]$ (Proposition \ref{RF_is_nilp}), there exists an integer $k$ such that $w \in \Gamma_k - \Gamma_{k+1}$. Thus, $\bar w$ is a non-trivial element of $\Lie_k(RF[X])$, sent to $0$ by $\overline \mu$. But $\overline \mu$ is an isomorphism (Theorem \ref{L(RF) = LR}), so this is not possible: our element $w$ must be trivial.
\end{proof}

\subsubsection*{Some remarks on finite presentations of nilpotent groups}

Every nilpotent group of finite type admits a finite presentation. This fact is easy to prove, by induction on the nilpotency class, using that finitely generated abelian groups are finitely presented, and that an extension of finitely presented groups is finitely presented. As a consequence, the reduced free group $RF_n$ on $x_1, ..., x_n$ must admit a finite presentation. Can we find a simple one ? Considering that we have a finite presentation of the associated Lie algebra, the problem does not seem to be difficult at first. Indeed, let $G_n$ is the group admitting the same finite presentation as $R\mathcal L_n$ (see Remark \ref{finite_pstation_RL}), where brackets are replaced by commutators. These relations are true in $RF_n$ (see the proof of Prop. \ref{L(RF) = LR}), thus there is a map $\pi$ from $G_n$ onto $RF_n$, which must induce an isomorphism at the level of Lie rings. However, we can deduce that $\pi$ is an isomorphism \emph{only if we know that both these groups are nilpotent}. Which raises the question: do the relations defining $G_n$ imply that it is nilpotent ?

Thus we are led to ask ourselves: \emph{what finite set of relation is needed to ensure that a group is nilpotent ?} This question is strongly related to the following question: \emph{can we give a simple finite presentation of the free nilpotent group of class $c$ ?} (where ``simple" is taken in some naive sense). This question is surprisingly difficult. The reader can convince himself that killing commutators of the form $[x_{i_0}, ..., x_{i_c}]$ (or even $[x_{i_0}^\pm, ..., x_{i_c}^\pm]$) does not seem to be enough, because the usual formulas of commutator calculus seem not to allow one to reduce to commutators of this particular form and length. Even killing all iterated commutators of length $c+1$ of the generators is only conjectured to be enough \cite{Sims, Jackson}. 

To get a presentation known to work in general, we must take a much larger one. For instance, one can kill all iterated commutators of the generators of length between $c+1$ and $2c$. This can be improved slightly by killing only relations of the form $[x,y]$, where $x,y$ are iterated commutators of the generators of length at most $c$, whose length add up to at least $c+1$. Indeed, all iterated commutators of length greater than $c$ can be written as a product of conjugates of iterated commutators of the generators of length greater than $c$ (by repeated use of the formulas $[a, bc] = [a,b] \cdot [a,c] \cdot [[c,a],b]$ and $[a,b^{-1}] = [b,a]^b$). And every such commutator has a sub-commutator of the given form (to see that, it can help to think of commutator words as rooted planar binary trees).

\medskip

In order to avoid these problems, and to keep our presentations simple, we will only give a presentation of $RF_n$ \emph{as a nilpotent group}, that is, we \emph{assume} that the group $G_n$ is the reasoning above is nilpotent, thus obtaining:
\begin{prop}\label{def2_RF}
The reduced free group $RF_n$ is the quotient of the free $n$-nilpotent group on $x_1, ..., x_n$ by the following finite set of relations:
\[\forall s \leq n,\ \forall (x_i) \in \Delta_s (X),\ [x_1, ..., x_s] = 1,\]
where $[x_1, ..., x_s]$ denotes $[x_1, [x_2, [ \cdots [x_{s-1}, x_s]] \cdots]$.
\end{prop}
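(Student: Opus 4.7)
The plan is to denote by $G_n$ the group defined by this presentation, i.e.\ the quotient of the free $n$-nilpotent group on $x_1, \dots, x_n$ by the iterated commutator relations $[x_1, \dots, x_s] = 1$ for $s \leq n$ and $(x_i) \in \Delta_s(X)$, and to construct a surjection $\pi \colon G_n \twoheadrightarrow RF_n$ whose effect on associated graded Lie rings is an isomorphism; the nilpotence of both groups will then force $\pi$ itself to be an isomorphism.

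First I would check that $\pi$ is well-defined, that is, that the relations defining $G_n$ already hold in $RF_n$. That $RF_n$ is $n$-nilpotent is given by Proposition~\ref{RF_is_nilp}. The iterated commutator relations with repetition hold in $RF_n$ by exactly the argument already spelled out inside the proof of Theorem~\ref{L(RF) = LR}: the normal closure $\mathcal N(x)$ of a generator is abelian, so if $x$ appears at two positions in $[x_1, \dots, x_s]$, pushing one of them inside shows that the relevant inner commutator lies in $\mathcal N(x)$ and commutes with the other occurrence of $x$, killing the whole expression. Since both groups are generated by $x_1, \dots, x_n$, the morphism $\pi$ is surjective.

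Next I would compare the associated graded Lie algebras. The Lie ring $\Lie(G_n)$ is generated in degree one, so there is a natural map $\varphi \colon R\mathfrak L[n] \to \Lie(G_n)$ sending $y_i$ to $\overline{x_i}$, well-defined because the defining Lie relations of $R\mathfrak L[n]$ (iterated brackets with repetition) are precisely the shadow of the iterated commutator relations imposed in $G_n$. By construction $\varphi$ is surjective. Composing with $\Lie(\pi) \colon \Lie(G_n) \to \Lie(RF_n)$ and the isomorphism $\Lie(RF_n) \cong R\mathfrak L[n]$ of Theorem~\ref{L(RF) = LR}, we get an endomorphism of $R\mathfrak L[n]$ which is the identity on the generators $y_i$, hence the identity everywhere. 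A surjection followed by a map whose composition is an iso is itself an iso, so both $\varphi$ and $\Lie(\pi)$ are isomorphisms.

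Finally I would conclude by the standard nilpotent-groups argument: the kernel of $\pi$ meets each quotient $\Gamma_i G_n / \Gamma_{i+1} G_n$ trivially (this is exactly the injectivity of $\Lie(\pi)$), so $\ker(\pi) \subseteq \bigcap_{i \geq 1} \Gamma_i G_n = \Gamma_{n+1} G_n = \{1\}$ by $n$-nilpotence of $G_n$. The main subtlety — and the reason the statement is phrased as a presentation \emph{relative to} the free nilpotent group rather than as an absolute finite presentation of $RF_n$ — is precisely the nilpotence input for $G_n$: as the discussion preceding the proposition emphasizes, without it there is no a priori reason the iterated commutator relations with repetition should force $G_n$ to be nilpotent, and the Lie-algebra step above would then collapse.
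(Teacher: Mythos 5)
Your proof is correct and follows essentially the same route as the paper: the paper does not write out a separate proof of Proposition~\ref{def2_RF}, but the discussion immediately preceding it sketches exactly your argument (a surjection $\pi$ from the relatively presented nilpotent group onto $RF_n$, the induced isomorphism on Lie rings via Theorem~\ref{L(RF) = LR}, and nilpotence of both sides to upgrade this to an isomorphism of groups). You have simply filled in the details of that sketch, including the correct observation that defining $G_n$ as a quotient of the free $n$-nilpotent group is what supplies the otherwise problematic nilpotence hypothesis.
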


The subtlety of this situation was not perceived in \cite{Cohen}, where it was assumed that this presentation (with $(n+1)$-commutators included) would automatically define a nilpotent group. Note that several results of the present paper give some insight on the group-theoretic results of \cite{Cohen}, which were stated only in terms of the underlying abelian groups, and become simpler when taking into account the Lie ring structure.

\subsection{Centralizers}

We will use corollary \ref{RF_in_A} to compute the centralizers of generators in $RF[X]$. First, we show a lemma about commutation relations in $A[Y]$:

\begin{lem}\label{centralizers_in_A}
Let $y \in Y$, and let $\lambda$ be an integer. Define the $\lambda$-centralizer $C_{\lambda}(y)$ of $y$ in $A[Y]$ to be:
\[C_{\lambda}(y):= \{u \in A[Y]\ |\ u y = \lambda y u\}.\]
If $\lambda \neq 1$, then $C_{\lambda}(y)$ is exactly $\langle y \rangle$. If $\lambda = 1$, then $C_{\lambda}(y) = \Z \cdot 1 \oplus \langle y \rangle$. 
As a consequence, $\Z \cdot 1 \oplus \langle y \rangle$ is the centralizer $C(y)$ of $y$. Also, $\langle y \rangle$ is the annihilator $\Ann(y)$ of $y$, and it is also the set of elements $u$ satisfying $uy = -yu$.
\end{lem}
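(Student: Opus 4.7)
The plan is to exploit the explicit monomial basis of $A[Y]$ given by \cref{basis_of_A}: every $u \in A[Y]$ expands uniquely over monomials without repetition. I would decompose $u = u_0 + u_1$, where $u_0$ gathers the basis monomials not containing the letter $y$ and $u_1$ gathers those that do. The first observation is that both left- and right-multiplication by $y$ annihilate $u_1$: for any monomial $m$ already containing $y$, the words $my$ and $ym$ contain $y$ at least twice, hence vanish in $A[Y]$. Consequently the equation $uy = \lambda y u$ immediately reduces to $u_0 y = \lambda y u_0$.

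Next, I would write $u_0 = c \cdot 1 + \sum_{m \neq 1} c_m\, m$, where $m$ runs over non-empty basis monomials avoiding $y$. Then $u_0 y = c y + \sum c_m (my)$ and $y u_0 = c y + \sum c_m (ym)$; every word appearing here is itself a monomial without repetition, hence an element of the basis. The family $\{y\} \cup \{my\}_{m \neq 1} \cup \{ym\}_{m \neq 1}$ is pairwise distinct: $y$ is singled out by its length $1$, while $my \neq ym'$ follows from the fact that $ym'$ begins with $y$ whereas $my$ does not, since $m$ avoids $y$. Identifying coefficients in the basis then gives $(1-\lambda)c = 0$ together with $c_m = 0$ and $\lambda c_m = 0$ for every nontrivial $m$; hence $c_m = 0$ in all cases, and $c = 0$ precisely when $\lambda \neq 1$.

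Both cases of the statement follow: for $\lambda \neq 1$, we have $u_0 = 0$ and $u = u_1$ lies in the span of monomials containing $y$, which is exactly the two-sided ideal $\langle y \rangle$ generated by $y$; for $\lambda = 1$, one additionally allows a free summand $\Z \cdot 1$. The remaining claims are immediate specializations: $C(y) = C_1(y) = \Z \cdot 1 \oplus \langle y \rangle$, the annihilator $\Ann(y)$ corresponds to $\lambda = 0$ and equals $\langle y \rangle$, and the anticommutation relation corresponds to $\lambda = -1$, giving $\langle y \rangle$ again. The whole argument is essentially bookkeeping once the monomial basis of \cref{basis_of_A} is in play; the only mildly delicate point, handled above, is the pairwise distinctness of the basis vectors $y$, $my$ and $ym$, which is established by a simple first/last-letter comparison.
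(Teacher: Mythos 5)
Your proof is correct and follows essentially the same route as the paper's: expand $u$ over the basis of monomials without repetition from Fact \ref{basis_of_A}, note that the part of $u$ lying in $\langle y\rangle$ is killed by multiplication by $y$ on either side, and compare coefficients of the pairwise-distinct basis monomials $y$, $my$, $ym$ in $uy - \lambda yu = 0$. The only difference is presentational — you split $u = u_0 + u_1$ up front and make the distinctness of the words $my$ and $ym$ explicit, whereas the paper extracts the relevant coefficients directly — but the underlying argument is identical.
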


\begin{proof}
If $u$ is an element of $\langle y \rangle$, then $uy = \lambda yu = 0$. Moreover, obviously, $1 \in C_1(y)$. This proves one inclusion.  Let us prove the converse. Let $u$ be an element of $C(y)$. Let us decompose $u$ as a sum of monomials without repetition $\sum \lambda_\alpha m_\alpha$ in $A$, and consider a monomial $m_\alpha \neq 1$ not containing $y$. Then $\lambda_\alpha$ is the coefficient of $m_\alpha y$ in $0 = uy - \lambda yu$, so it must be zero. Also, if $\mu$ is the coefficient of $1$ in $m$, then the coefficient of $y$ in $uy - \lambda yu$ is $(1-\lambda)\mu$, hence $\mu = 0$ if $\lambda \neq 1$. Thus all the monomials appearing in the decomposition of $u$ (except possibly $1$ if $\lambda = 1$) must contain $y$, so that $u$ belongs to $\langle y \rangle$ (resp. to $\Z \oplus \langle y \rangle$ if $\lambda = 1$).
\end{proof}

The next lemma is \cite[Lem.~1.10]{Habegger-Lin}:
\begin{lem}\label{centralizers_in_RF}
Let $x \in X$. Let $C(x)$ be the centralizer of $x$ in $RF[X]$. Then $C(x)$ is exactly the normal closure $\mathcal N(x)$ of $x$.
\end{lem}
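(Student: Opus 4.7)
The inclusion $\mathcal{N}(x) \subseteq C(x)$ is immediate from the defining relations of $RF[X]$ and was already exploited in the proof of Theorem \ref{L(RF) = LR}: every conjugate of $x$ commutes with $x$, so the whole normal closure of $x$ does as well. The content of the lemma is the reverse inclusion, which I would prove by transporting the problem into the reduced free algebra.

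The key tool is the Magnus-type embedding $\mu: RF[X] \hookrightarrow A[Y]^\times$ of Corollary \ref{RF_in_A}. If $g \in C(x)$, then $\mu(g)$ commutes with $\mu(x) = 1+y$, hence with $y$. By Lemma \ref{centralizers_in_A} this forces $\mu(g) = n \cdot 1 + \xi$ for some $n \in \Z$ and some $\xi \in \langle y \rangle$. Since $\mu$ takes values in $1+(Y)$ (because $\mu(x_i^{\pm 1}) = 1 \pm y_i$ and $1+(Y)$ is a subgroup of $A[Y]^\times$), inspecting constant terms forces $n=1$, so $\mu(g) \equiv 1 \pmod{\langle y \rangle}$.

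To translate this congruence back into group theory, I would use the commutative square
\[
\begin{tikzcd}
RF[X] \ar[r, "\mu"] \ar[d, "p_x"'] & A[Y] \ar[d, "q"] \\
RF[X - \{x\}] \ar[r, "\mu'"'] & A[Y - \{y\}]
\end{tikzcd}
\]
in which $p_x$ is the projection killing $x$ and $q$ is the ring map sending $y \mapsto 0$ and $y_i \mapsto y_i$ for $i \neq x$. The map $q$ is well defined (the relations of $A[Y]$ are preserved, since a monomial with repetition is either killed because its image still has a repetition in $Y-\{y\}$, or because it contained $y$); commutativity on generators is immediate; and from the monomial basis of Fact \ref{basis_of_A} one checks $\ker(q) = \langle y \rangle$. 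Consequently $\mu'(p_x(g)) = q(\mu(g)) = q(1+\xi) = 1$, and the injectivity of $\mu'$ (Corollary \ref{RF_in_A} applied to $X-\{x\}$) yields $p_x(g) = 1$, i.e.\ $g \in \ker(p_x) = \mathcal{N}(x)$.

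The main obstacle is really conceptual rather than technical: one has to recognise that the strikingly rigid description of the centralizer $C(y) = \Z \cdot 1 \oplus \langle y \rangle$ in $A[Y]$ given by Lemma \ref{centralizers_in_A} is strong enough to pin down $\mu(g)$ modulo the ideal $\langle y \rangle$, which in turn corresponds under Magnus to the normal closure $\mathcal{N}(x)$. Once this is seen, the functoriality of $\mu$ under deletion of a generator does the rest, and no delicate group-theoretic computation inside $RF[X]$ is needed.
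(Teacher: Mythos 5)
Your proof is correct and follows essentially the same route as the paper: both use the embedding $\mu$ of Corollary \ref{RF_in_A} together with the computation of $C(1+y)$ from Lemma \ref{centralizers_in_A} to conclude $\mu(g) \in 1 + \langle y \rangle$, and both then identify $1 + \langle y \rangle$ as the preimage of $1$ under the projection $A[Y] \twoheadrightarrow A[Y-\{y\}]$, which is compatible with $p_x$ and whose injective restriction to $RF[X-\{x\}]$ forces $g \in \mathcal N(x)$. The only difference is that you spell out the commutative square and the identification $\ker(q) = \langle y \rangle$ explicitly, which the paper leaves implicit.
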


\begin{proof}
The inclusion $\mathcal N(x) \subseteq C(x)$ follows from the definition of $RF[X]$. Let us prove the converse. From Corollary \ref{RF_in_A}, we know that $C(x) = C(1+y) \cap RF[X] = (\Z \oplus \langle y \rangle) \cap RF[X]$. Moreover, $RF[X] \hookrightarrow A[Y]$ takes values in $1 + \overline A[Y]$ (where $\overline A$ is the augmentation ideal of $A$, that is, the set of polynomials with no constant term). As a consequence, this intersection is  $(1 + \langle y \rangle) \cap RF[X]$. But $1 + \langle y \rangle$ is exactly the set of elements sent to $1$ by the projection $A[Y] \twoheadrightarrow A[Y-\{y\}]$. This projection induces the projection from $RF[X]$ to $RF[X-\{x\}]$, whose kernel is $\mathcal N(x)$, whence the result.
\end{proof}

\begin{lem}\label{centralizers_in_RL}
Let $y \in Y$. Let $C_{\mathfrak L}(y)$ be the centralizer of $y$ in $R\mathfrak L[Y]$. Then $C_{\mathfrak L}(y)$ is exactly the Lie ideal $\langle y \rangle$ generated by $y$.
\end{lem}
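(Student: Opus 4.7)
The plan is to use the embedding $R\mathfrak L[Y]\hookrightarrow A[Y]$ of Proposition \ref{Prim(A)} to reduce to the associative situation handled by Lemma \ref{centralizers_in_A}, and then to pin down the resulting intersection using the Lyndon basis.

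First, since the bracket on $R\mathfrak L[Y]$ is the restriction of the commutator bracket of $A[Y]$, an element $u\in R\mathfrak L[Y]$ centralizes $y$ in the Lie sense if and only if $uy=yu$ in $A[Y]$. Lemma \ref{centralizers_in_A} identifies the associative centralizer of $y$ with $\Z\cdot 1\oplus\langle y\rangle_A$, where $\langle y\rangle_A$ denotes the two-sided associative ideal generated by $y$ in $A[Y]$. Since elements of $R\mathfrak L[Y]$ are Lie polynomials of strictly positive degree, they have no constant term, so
\[C_{\mathfrak L}(y)=\langle y\rangle_A\cap R\mathfrak L[Y].\]

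It remains to identify this intersection with the Lie ideal $\langle y\rangle_{\mathfrak L}$ generated by $y$ in $R\mathfrak L[Y]$. The inclusion $\langle y\rangle_{\mathfrak L}\subseteq\langle y\rangle_A\cap R\mathfrak L[Y]$ is immediate, since $\langle y\rangle_{\mathfrak L}$ is spanned by iterated brackets of the form $[x_1,[x_2,\ldots,[x_k,y]\ldots]]$, whose expansions in $A[Y]$ are linear combinations of monomials all containing $y$. For the reverse inclusion I would invoke Proposition \ref{rk_of_RL}: a basis of $R\mathfrak L[Y]$ is given by the Lyndon polynomials $P_w$ for $w$ a Lyndon word without repetition. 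Expanding $P_w$ in $A[Y]$ yields a linear combination of monomials, each of which is a permutation of the letters of $w$, so $P_w\in\langle y\rangle_A$ precisely when $y$ appears in $w$. Hence $\langle y\rangle_A\cap R\mathfrak L[Y]$ is spanned by those $P_w$ whose underlying Lyndon word contains $y$, and each such $P_w$ belongs to $\langle y\rangle_{\mathfrak L}$ by induction on $|w|$: the base case $w=y$ is immediate, while otherwise the standard factorization gives $P_w=[P_u,P_v]$ with $u,v$ strictly shorter Lyndon words without repetition (subwords of a repetition-free word are repetition-free); since $y$ appears in $w=uv$ it appears in $u$ or in $v$, so one of $P_u,P_v$ lies in $\langle y\rangle_{\mathfrak L}$ by induction, whence so does $P_w$.

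I expect the main obstacle to be precisely this last inductive step. The reduction to it is mechanical, but its success hinges on two compatibilities between the Lyndon basis and the ``alphabet of $w$'': that the standard factorization preserves the repetition-free condition, and that membership in $\langle y\rangle_A$ can be read off directly from the letters appearing in $w$. Both are immediate here, which should make the plan close up without surprise.
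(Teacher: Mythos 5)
Your proof is correct and follows essentially the same route as the paper: reduce to the associative centralizer via Lemma \ref{centralizers_in_A}, discard the constant term, and identify $\langle y\rangle_A\cap R\mathfrak L[Y]$ with the Lie ideal $\langle y\rangle$. The paper simply asserts this last identification (``the submodule generated by Lie monomials in which $y$ appears''), whereas you spell it out with the Lyndon basis and an induction on the standard factorization; that is a legitimate expansion of the same argument, not a different one.
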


\begin{proof}
If we now denote by $\langle y \rangle_A$ the ideal generated by $y$ in $A$ (denoted by $\langle y \rangle$ above), we have that $C_{\mathfrak L}(y) = C_1(y) \cap R \mathfrak L [Y] = \langle y \rangle_A \cap R \mathfrak L [Y]$ is the submodule of $R\mathfrak L[Y]$ generated by Lie monomials in which $y$ appears, which is exactly $\langle y \rangle$.
\end{proof}

\begin{prop}\label{center_of_RF}
The center of $RF_n$ is the intersection of the $\mathcal N(x_i)$, and also coincide with $\Gamma_n(RF_n)$ ; it is free abelian of rank $(n-1)!$
\end{prop}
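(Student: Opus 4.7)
The plan is to deduce both equalities from a Lie-algebra computation inside $R\mathfrak L_n$, using the isomorphism $\Lie(RF_n) \cong R\mathfrak L_n$ (Theorem~\ref{L(RF) = LR}) together with the $n$-nilpotence of $RF_n$ (Proposition~\ref{RF_is_nilp}).

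For the first equality, I would observe that since the $x_i$ generate $RF_n$, the center equals $\bigcap_i C(x_i)$, and Lemma~\ref{centralizers_in_RF} rewrites each $C(x_i)$ as $\mathcal N(x_i)$, giving $\mathcal Z(RF_n) = \bigcap_i \mathcal N(x_i)$.

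For $\mathcal Z(RF_n) = \Gamma_n(RF_n)$, one inclusion is already explicit in the proof of Proposition~\ref{RF_is_nilp}: $\Gamma_n(RF_n) \subseteq \bigcap_i \mathcal N(x_i)$. For the converse, I would take a nontrivial $w \in \bigcap_i \mathcal N(x_i)$ and let $k$ be its degree with respect to $\Gamma_*(RF_n)$ (well defined by nilpotence). Since $w$ commutes with each $x_i$, the class $\bar w \in \Lie_k(RF_n) = R\mathfrak L_n^{(k)}$ lies in the Lie centralizer $C_{\mathfrak L}(y_i) = \langle y_i \rangle$ for every $i$ by Lemma~\ref{centralizers_in_RL}. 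From the description of $\langle y_i \rangle$ as the submodule of $R\mathfrak L_n$ spanned by Lyndon Lie monomials in which $y_i$ appears (implicit in Proposition~\ref{Prim(A)} and Proposition~\ref{rk_of_RL}), the intersection $\bigcap_i \langle y_i \rangle$ is spanned by Lyndon monomials containing every generator, and hence lives in degree exactly $n$. Therefore $k = n$, that is $w \in \Gamma_n(RF_n)$, establishing equality.

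The rank assertion then drops out: nilpotence forces $\Gamma_n(RF_n) = \Gamma_n/\Gamma_{n+1} \cong R\mathfrak L_n^{(n)}$, which by Proposition~\ref{rk_of_RL} is free abelian of rank $(n-1)!\binom{n}{n} = (n-1)!$. The only delicate point is the passage from the group-level commutation $[w,x_i]=1$ to $[\bar w, y_i]=0$ in $R\mathfrak L_n$ and the recognition that $\bigcap_i \langle y_i \rangle$ is concentrated in top degree; once that combinatorial observation about Lyndon monomials without repetition is in hand, the conclusion is immediate from the results of the previous subsections.
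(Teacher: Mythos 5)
Your proof is correct and follows essentially the same route as the paper: pass to the Lie algebra via Theorem~\ref{L(RF) = LR}, use Lemmas~\ref{centralizers_in_RF} and~\ref{centralizers_in_RL} to identify the centralizers, and observe that Lyndon monomials without repetition containing all $n$ generators are concentrated in degree $n$, so that the rank follows from Proposition~\ref{rk_of_RL}. The only (harmless) variation is that you obtain $\mathcal Z(RF_n) = \bigcap_i \mathcal N(x_i)$ directly from the centralizer lemma, whereas the paper closes the cycle of inclusions $\Gamma_n(RF_n) \subseteq \bigcap_i \mathcal N(x_i) \subseteq \mathcal Z(RF_n) \subseteq \Gamma_n(RF_n)$.
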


\begin{proof}
The inclusions $\Gamma_n(RF_n) \subseteq \bigcap \mathcal N(x_i) \subseteq \mathcal Z(RF_n)$ were already established in the proof of Proposition \ref{RF_is_nilp}. Let $w$ be a non-trivial element of $\mathcal Z(RF_n)$. Since $RF_n$ is nilpotent, $w \in \Gamma_k - \Gamma_{k+1}$ for some $k$, and $\overline w$ is a non-trivial element in the center of $\Lie(RF_n) \cong R\mathfrak L_n$ (see Theorem \ref{L(RF) = LR}). From Lemma \ref{centralizers_in_RL}, we deduce that $\overline w$ is in the Lie ideal $\langle y_1 \rangle \cap ... \cap \langle y_n \rangle$. As a consequence, all $y_i$ appear at least once in each Lie monomial of the decomposition of $\bar w$. Thus its degree must be at least $n$, which means that $w \in \Gamma_n(RF_n)$.

Moreover, $\Gamma_n(RF_n) = \Gamma_n(RF_n)/\Gamma_{n+1}(RF_n) = \Lie_n(RF_n)$ identifies with the degree-$n$ part $R\mathfrak L[n]_n$ of $R\mathfrak L[n]$, which is free abelian of rank $(n-1)!$ by Lemma \ref{rk_of_RL}.
\end{proof}

\section{Derivations and the Johnson morphism}

In order to tackle the Andreadakis problem for $RF_n$, we need to understand the associated Johnson morphism, whose target is the algebra of derivations of the reduced free Lie algebra.

\subsection{Derivations}\label{Section2}

We begin our study of derivations by those of $A[Y]$, which are quite easy to handle.

\begin{prop}\label{derivations_of_A}
Any derivation $d$ of $A[Y]$ sends each element $y$ of $Y$ to an element of the ideal $\langle y \rangle$. Conversely, any application $d_Y: Y \rightarrow A[Y]$ sending each $y$ into $\langle y \rangle$ extends uniquely to a derivation of $A[Y]$.
\end{prop}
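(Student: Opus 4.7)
For the forward direction, the key observation is that each generator satisfies $y^2 = 0$ in $A[Y]$. Applying a derivation $d$ and the Leibniz rule yields $d(y)\, y + y\, d(y) = 0$, i.e.\ $d(y)\, y = -y\, d(y)$. The last clause of Lemma \ref{centralizers_in_A} identifies the set of elements anticommuting with $y$ in this way precisely with $\langle y \rangle$, so $d(y) \in \langle y \rangle$ as required.

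For the converse, the plan is to construct the extension by lifting to the free associative algebra $T[Y]$, where any set map extends uniquely to a derivation. Using Fact \ref{basis_of_A}, identify $A[Y]$ as a $\Z$-module with the submodule of $T[Y]$ spanned by monomials without repetition; this canonically lifts $d_Y$ to a map $\tilde d_Y : Y \rightarrow T[Y]$, with each $\tilde d_Y(y)$ a $\Z$-linear combination of monomials without repetition, each containing the letter $y$ (since $d_Y(y) \in \langle y \rangle$). Extend $\tilde d_Y$ to a derivation $\tilde d$ of $T[Y]$ by the universal property of the tensor algebra.

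The substantive step is then to show that $\tilde d$ descends to a derivation of the quotient $A[Y] = T[Y]/R$, where $R$ is the two-sided ideal spanned by monomials with at least one repetition. Since $R$ is generated as a two-sided ideal by the elements $y^2$, and $\tilde d$ is a derivation, it suffices to verify $\tilde d(y^2) \in R$ for each $y \in Y$. But $\tilde d(y^2) = \tilde d_Y(y)\, y + y\, \tilde d_Y(y)$, and every monomial appearing in either term contains $y$ at least twice --- once from a term of $\tilde d_Y(y)$, once from the adjacent factor --- so lies in $R$. This descends $\tilde d$ to the desired $d$ on $A[Y]$; uniqueness is immediate since $Y$ generates $A[Y]$ as an algebra. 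The only real obstacle here is the descent verification itself: routing through $T[Y]$ reduces it to a one-line computation, whereas a direct construction on $A[Y]$ by applying Leibniz to the monomials-without-repetition basis would require a more delicate well-definedness check over relations involving multiple repetitions.
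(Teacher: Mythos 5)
Your forward direction is exactly the paper's argument: apply $d$ to $y^2=0$ and invoke the $\lambda=-1$ case of Lemma \ref{centralizers_in_A}. The converse, however, contains a genuine gap. You claim that the ideal $R$ of relations is generated as a two-sided ideal by the elements $y^2$, and on that basis you only verify $\tilde d(y^2)\in R$. This is false: $R$ is the span of \emph{all} monomials with a repetition, and the two-sided ideal generated by the $y^2$ only contains monomials with an \emph{adjacent} repetition. For instance $y_1y_2y_1\in R$, but $y_1y_2y_1$ is nonzero in $T[Y]/(y^2 : y\in Y)$ (that quotient has the infinite basis of alternating words), so $y_1y_2y_1\notin (y^2 : y\in Y)$. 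Consequently, checking $\tilde d$ on the squares alone does not establish $\tilde d(R)\subseteq R$.

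The strategy is salvageable, and the repair is essentially what the paper does: verify the descent condition on every monomial with repetition. If $m=y_{i_1}\cdots y_{i_l}$ has $y_{i_a}=y_{i_b}$ with $a\neq b$, then in each Leibniz term $y_{i_1}\cdots \tilde d_Y(y_{i_j})\cdots y_{i_l}$ either $j\notin\{a,b\}$, so both repeated occurrences survive untouched, or $j\in\{a,b\}$, in which case every monomial of $\tilde d_Y(y_{i_j})$ still contains the letter $y_{i_j}=y_{i_a}=y_{i_b}$ (this is where the hypothesis $d_Y(y)\in\langle y\rangle$ is used), while the other occurrence remains elsewhere in the product. Either way each term lies in $R$. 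Note also a cosmetic difference from the paper: the paper extends $d_Y$ to a derivation $d_T\colon T[Y]\to A[Y]$ with values in the bimodule $A[Y]$ and shows it \emph{vanishes} on monomials with repetition, rather than lifting to an endomorphism of $T[Y]$ and checking that $R$ is \emph{preserved}; both routes work once the verification above is done on all of $R$.
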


\begin{proof}
First, given a derivation $d$, we can apply it to the relation $y^2=0$. We get that $(dy)y + y(dy) = 0$. Thus $dy \in C_{-1}(y)$, which means that $dy \in \langle y \rangle$ by Lemma \ref{centralizers_in_A}.

Suppose now that we are given a map $d_Y: Y \rightarrow A[Y]$ sending each $y$ into $\langle y \rangle$. Then $d_Y$ extends uniquely to a derivation $d_T$ from $T[Y]$ to $A[Y]$ (the latter being a $T[Y]$-bimodule in the obvious sense) in the usual way:
\[d_T(y_{i_1} \cdots y_{i_l}):= \sum\limits_{j=1}^l y_{i_1} \cdots y_{i_{j-1}} \cdot d_Y(y_{i_j}) \cdot y_{i_{j-1}} \cdots y_{i_l}.\]
From the hypothesis on $d_Y$, we deduce that $d$ vanished on the monomials with repetition (the sum on the left being a sum of monomials with repetition in this case), so that it induces a well defined derivation $d: A[Y] \rightarrow A[Y]$ extending $d_Y$. Unicity is obvious from the fact that $Y$ generate the ring $A[Y]$.
\end{proof}

We now turn to the study of derivations of $R\mathfrak L[Y]$. We consider only derivations (strictly) increasing the degree, that is, sending $Y$ into $R\mathfrak L[Y]_{\geq 2}$, . In fact, we will mostly be concerned with homogeneous such derivations (which raise the degree by a fixed amount), but we will see that this distinction is not important for $R\mathfrak L[Y]$ (Cor.~\ref{graded_der}).

\begin{prop}\label{dy_in_(y)}
Let $d$ be a derivation of $R\mathfrak L[Y]$. Then for any $y \in Y$:
\[dy \in \langle y \rangle + \bigcap\limits_{y' \neq y}\langle y' \rangle =: J_y,\]
where $\langle y \rangle$ is the Lie ideal generated by $y$. Conversely, any map from $Y$ to $R\mathfrak L[Y]_{\geq 2}$ satisfying this condition can be extended uniquely to a derivation of $R\mathfrak L[Y]$.
\end{prop}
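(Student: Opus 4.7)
The plan is to prove the forward direction by applying $d$ to the defining relation $[y,[y,u]] = 0$ and reading the resulting identity inside $A[Y]$ via the embedding of Prop~\ref{Prim(A)}; the converse is proved by extending $d_Y$ through $\mathfrak L[Y]$ and checking descent to $R\mathfrak L[Y]$ on generators of the ideal of relations.

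For the forward direction, fix $y \in Y$ and split $dy = v_1 + v_2$ using the Lyndon basis (Prop~\ref{rk_of_RL}), with $v_1$ supported on Lyndon words containing $y$ (so $v_1 \in \langle y \rangle$ automatically) and $v_2$ on Lyndon words missing $y$. The goal is to show $v_2 \in \bigcap_{y' \neq y}\langle y'\rangle$. Applying Leibniz to $[y,[y,u]] = 0$ yields
\[[dy,[y,u]] + [y,[dy,u]] + [y,[y,du]] = 0;\]
the third term and both $v_1$-contributions vanish because each Lie monomial involved carries $y$ at least twice. This leaves the identity $[v_2,[y,u]] + [y,[v_2,u]] = 0$ in $R\mathfrak L[Y]$, valid for every $u$.

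Specializing $u = y'$ with $y' \neq y$ and expanding both commutators inside $A[Y]$ produces
\[v_2 yy' - 2 v_2 y'y + y v_2 y' + y' v_2 y - 2yy' v_2 + y'y v_2 = 0 \text{ in } A[Y].\]
Decompose $v_2 = \sum_M v_2^M$ by the letter set $M$ of its Lyndon support; each $v_2^M$ expands in $A$ as a combination of monomials with letter set exactly $M$ (Fact~\ref{basis_of_A}). For $M \subsetneq Y \setminus \{y\}$, choose $y' \in Y \setminus (M \cup \{y\})$ and inspect the monomials ending with $yy'$: among the six summands only $v_2^M \cdot yy'$ produces such monomials (the other five end in $y'y$, $y$, $y'$, or a letter of $M$, since $v_2^M$ has no $y$ or $y'$ as a letter). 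Hence the coefficient of each $m \cdot yy'$ in the equation equals the coefficient of $m$ in $v_2^M$, forcing $v_2^M = 0$. Varying $M$ places $v_2$ entirely on Lyndon words of letter set $Y \setminus \{y\}$ (and $v_2 = 0$ when $Y$ is infinite, since every such $M$ is a proper subset), so $v_2 \in \bigcap_{y' \neq y}\langle y'\rangle$.

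For the converse, extend $d_Y$ through the universal property of $\mathfrak L[Y]$ to a Lie derivation $\tilde d \colon \mathfrak L[Y] \to R\mathfrak L[Y]$ (with module structure via the quotient). Uniqueness is clear since $Y$ generates $R\mathfrak L[Y]$; for existence I must check $\tilde d$ kills the Lie ideal $S$ of relations on generators $[y_{i_1},\ldots,y_{i_s}]$ carrying a repetition $y_{i_j} = y_{i_k}$. By Leibniz, any term where the derivation lands on a position $l \notin \{j,k\}$ still carries the $(j,k)$-repetition and vanishes in $R\mathfrak L$. When it lands on position $j$, split $d_Y(y_{i_j}) = u_1 + u_2$ per hypothesis: $u_1 \in \langle y_{i_j}\rangle$ re-introduces a $y_{i_j}$ yielding a repetition with position $k$, whereas $u_2 \in \bigcap_{y' \neq y_{i_j}}\langle y'\rangle$ contains every other letter and so clashes with any position $l \neq j$ carrying $y_{i_l} \neq y_{i_j}$; the degenerate case where all $y_{i_l}$ equal $y_{i_j}$ makes the original relation already zero in $\mathfrak L$ by antisymmetry. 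The delicate step is the forward direction: the identity from $[y,[y,u]]=0$ looks weak on its own, and the idea of pushing it into $A[Y]$ and isolating the leading $yy'$-monomial to extract $v_2^M$ is the main obstacle.
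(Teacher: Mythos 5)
Your proof is correct. The forward direction is essentially the paper's argument: the paper differentiates the relation $[y,z,y]=0$ and reads off the coefficient of $mzy$ in $A[Y]$, while you differentiate $[y,[y,y']]=0$ and read off the coefficient of $myy'$; both isolate exactly the monomials of $dy$ missing $y$ and some further letter, and the bookkeeping is the same. The converse is where you genuinely diverge. The paper reduces to homogeneous maps and splits into two cases: away from degree $|Y|-2$ the hypothesis collapses to $d_Y(y)\in\langle y\rangle$ and the extension is performed in the associative algebra $A[Y]$ via Proposition~\ref{derivations_of_A} and then restricted to $R\mathfrak L[Y]$, while in the exceptional degree $|Y|-2$ a separate argument shows the induced Lie derivation kills the relations for degree reasons. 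You instead extend $d_Y$ over the free Lie algebra $\mathfrak L[Y]$ and verify descent on the linear monomials with repetition by a letter-clash argument; since Lemma~\ref{lem_letters} guarantees these span the relation module $S$, this does suffice. Your route is uniform in the degree --- the two summands of $J_y$ correspond to the two branches of the clash analysis ($u_1$ clashes with the repeated position, $u_2$ with any position carrying a different letter, the all-equal case being zero already in $\mathfrak L[Y]$) --- and it never leaves the Lie-algebra world. What it costs is checking well-definedness by hand on each relation, which the paper's detour through $A[Y]$ gets for free in the generic case; in exchange you avoid the case split entirely. Both arguments are valid.
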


Let us remark that the homogeneous ideal $J_y$ differs from $\langle y \rangle$ only in degree $|Y|-1$ (in particular, only when $Y$ is finite), since the second term is generated by Lie monomials without repetition where all $y'$ appear, save possibly $y$. Moreover, one easily sees that, for $|Y| = n$, the ideal $J_y$ contains all of $R\mathfrak L[n]_{n-1}$. 

\begin{proof}[Proof of Prop. \ref{dy_in_(y)}]
For $|Y| = 2$, remark that $R\mathfrak L[Y]_2 \subset \langle y_1 \rangle \cap \langle y_2 \rangle$ and $R\mathfrak L[Y]_{\geq 3} = \{0\}$. As a consequence, any linear map raising the degree satisfies the condition and defines a derivation, so we have nothing to show.

Let us suppose that $Y$ has at least three elements. Let $d$ be a derivation of $R\mathfrak L[Y]$, and let $y \in Y$. Take $z \in Y - \{y\}$, and consider the relation $0 = d([y,z,y]) = [dy,z,y] + [y,z,dy]$. Let us decompose $dy$ as a sum of monomials in $A[Y]$. Let $m$ be a monomial which contains neither $y$ nor $z$, and let $\lambda$ be the coefficient  of $m$ in $dy$. Then the monomial $mzy$ appears with coefficient $2\lambda$ in the decomposition of $[dy,z,y] + [y,z,dy]$, so $\lambda$ must be trivial. Since this is true for any $z \neq y$, the only monomials without repetition not containing $y$ that can appear in $dy$ are the ones containing every element of $Y$ save $y$,  which are exactly the generators of $J_y$ modulo $\langle y \rangle$. This shows that $dy \in J_y$.

To show the converse, we can restrict to homogeneous maps, since any map from $Y$ to $\rightarrow R\mathfrak L[Y]_{\geq 2}$ is a sum of homogeneous ones, and a sum of derivations is a derivation. Suppose that we are given a homogeneous map $d_Y: Y \rightarrow R\mathfrak L[Y]_{\geq 2}$ sending each $y$ into $J_y$. If $d_Y$ is not of degree $|Y|-2$, this condition amounts to $d_Y(y) \in \langle y \rangle$. This Lie ideal stands inside the associative ideal $\langle y \rangle \subset A[Y]$. We can thus use Proposition \ref{derivations_of_A} to extend this map to a derivation of $A[Y]$. This derivation sends $Y$ into $R\mathfrak L[Y]$, hence it preserves $R\mathfrak L[Y] \subset A[Y]$. As a consequence, it restricts to a derivation of $R\mathfrak L[Y]$ extending $d_Y$. 

We are left to study the case when $Y$ has $n$ elements and $d_Y$ is of degree $n-2$. Then the conditions on the elements $d_Y(y)$ are empty. We can still extend $d_Y$ to a derivation from $T[Y]$ to $A[Y]$, as in the proof of Proposition \ref{derivations_of_A}, but it does not vanish on the relations defining $A[Y]$. However, the induced Lie derivation from $\mathfrak L[Y]$ to $R\mathfrak L[Y]$ does vanish on the Lie monomials with repetition. Indeed, it vanishes on all elements of degree at least $3$ (sent to $R\mathfrak L[Y]_{\geq n+1} = \{0\}$), and there are no such monomials in degree $2$, since the elements $[y,y]$ are already trivial in $\mathfrak L[Y]$. As a consequence, it induces a well-defined derivation from $R\mathfrak L[Y]$ to itself. This derivation extends $d_y$ and is the only one to do so, since $R\mathfrak L[Y]$ is generated by $Y$. 
\end{proof}

\begin{cor}\label{graded_der}
Any derivation of $R\mathfrak L_n$ is the sum of homogeneous components:
\[\Der\left(\mathfrak L_n\right) \cong \bigoplus\limits_{k \geq 1 } \Der_k\left(R\mathfrak L_n\right).\]
\end{cor}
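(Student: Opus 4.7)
The plan is to reduce the corollary directly to Proposition~\ref{dy_in_(y)}, which characterizes derivations by their value on generators. Given any (degree-increasing) derivation $d$ of $R\mathfrak L_n$, I will first restrict to the generating set $Y = \{y_1, \dots, y_n\}$ to get a map $d_Y \colon Y \to R\mathfrak L[Y]_{\geq 2}$, and then decompose this map into homogeneous pieces using the grading of $R\mathfrak L_n$.

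Concretely, since $R\mathfrak L_n$ is concentrated in degrees $1$ through $n$, for each generator $y_i \in Y$ I can write $d(y_i) = \sum_{k=2}^{n} v_{i,k}$ with $v_{i,k} \in R\mathfrak L[Y]_k$. Proposition~\ref{dy_in_(y)} tells me that $d(y_i) \in J_{y_i}$; the crucial observation I would then invoke is that $J_{y_i} = \langle y_i \rangle + \bigcap_{y' \neq y_i}\langle y' \rangle$ is a \emph{homogeneous} Lie ideal of $R\mathfrak L[Y]$, so each homogeneous piece $v_{i,k}$ individually lies in $J_{y_i}$. Applying the converse direction of Proposition~\ref{dy_in_(y)}, the assignment $y_i \mapsto v_{i,k}$ extends uniquely to a derivation $d_k$ of $R\mathfrak L_n$, which is manifestly homogeneous of degree $k-1$.

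Finally, I would argue that $d = \sum_{k=2}^{n} d_k$ by the unicity clause of Proposition~\ref{dy_in_(y)}: both sides are derivations of $R\mathfrak L_n$ taking the same values on $Y$, and since $Y$ generates $R\mathfrak L_n$ as a Lie algebra, they must coincide. Uniqueness of the decomposition follows immediately from the uniqueness of the homogeneous decomposition in $R\mathfrak L_n$ (two distinct decompositions of $d$ would produce two distinct decompositions of $d(y_i)$ for some $i$). No step looks like a serious obstacle; the only subtlety to watch is ensuring that the ideal $J_{y_i}$ is genuinely homogeneous, which is clear from its generators being monomials, so the argument is essentially a formal grading manoeuvre enabled by the preceding proposition.
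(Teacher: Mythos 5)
Your proof is correct and follows essentially the same route as the paper: restrict $d$ to the generating set $Y$, split $d|_Y$ into homogeneous pieces (finitely many, since $R\mathfrak L_n$ is concentrated in degrees $1$ through $n$), extend each piece to a derivation via Proposition~\ref{dy_in_(y)}, and conclude by unicity on generators. Your explicit check that each homogeneous component of $d(y_i)$ still lands in $J_{y_i}$ because $J_{y_i}$ is a homogeneous ideal is exactly the point the paper leaves implicit (it records the homogeneity of $J_y$ in the remark following Proposition~\ref{dy_in_(y)}), so nothing is missing.
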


\begin{proof}
If $d$ is such a derivation, Proposition \ref{dy_in_(y)} shows that the homogeneous components of its restriction to $Y$ extend uniquely to derivations of $R\mathfrak L_n$, whose sum coincide with $d$ on $Y$, hence everywhere. Note that it makes sense to speak of this sum, because $Y$ is finite, so that the number of non-trivial homogeneous components of $d|_Y$ is finite.
\end{proof}

The following theorem is an analogue of \cite[Prop. 2.41]{Darne1}, replacing free nilpotent groups by reduced free groups. 

\begin{theo}\label{Johnson_iso}
Let $n \geq 2$ be an integer. The Johnson morphism is an isomorphism:
\[\Lie\left(\mathcal A_*(RF_n)\right) \cong \Der(R\mathfrak L_n).\]
\end{theo}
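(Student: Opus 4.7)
Injectivity of $\tau$ is a general feature of Johnson morphisms, recalled in \cref{intro_to_Andreadakis}, so the content of the theorem is surjectivity. By Corollary~\ref{graded_der} it suffices to realize each homogeneous derivation $d$ of degree $k\geq 1$ as $\tau(\bar\sigma)$ for some $\sigma\in\mathcal A_k(RF_n)$. Given such a $d$, Proposition~\ref{dy_in_(y)} together with the fact that in degree $k+1$ the sum $J_{y_i} = \langle y_i\rangle + \bigcap_{y'\neq y_i}\langle y'\rangle$ is direct (its second summand being nonzero only when $k=n-2$) lets me split $d = d_1 + d_2$ with $d_1(y_i) \in \langle y_i\rangle$ and $d_2(y_i) \in \bigcap_{y'\neq y_i}\langle y'\rangle$. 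My plan is to realize $d_1$ and $d_2$ separately by endomorphisms $\sigma_1, \sigma_2$ of $RF_n$ and then take $\sigma := \sigma_1\sigma_2$.

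For $d_1$: every element of $\langle y_i\rangle \cap R\mathfrak L_{k+1}$ is a sum of Lie monomials of degree $k+1$ containing $y_i$, and each such monomial lifts to an iterated commutator of generators in which $x_i$ appears; by induction on the depth of the commutator (using normality of $\mathcal N(x_i)$), any such commutator belongs to $\mathcal N(x_i)$. I therefore choose a lift $u_i^{(1)} \in \mathcal N(x_i) \cap \Gamma_{k+1}(RF_n)$ of $d_1(y_i)$ and set $\sigma_1(x_i) := x_i u_i^{(1)}$. Then $\sigma_1(x_i) \in \mathcal N(x_i)$, and $\mathcal N(x_i)$ is abelian in $RF_n$, since its generators (the conjugates of $x_i$) pairwise commute by definition of the reduced free group. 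Any conjugate $\sigma_1(x_i)^{\sigma_1(w)}$ again lies in $\mathcal N(x_i)$ by normality, hence commutes with $\sigma_1(x_i)$; so the defining relations $[x_i, x_i^w] = 1$ of $RF_n$ are preserved. For $d_2$, relevant only when $k = n-2$: a basis element of $\bigcap_{y'\neq y_i}\langle y'\rangle$ in degree $n-1$ has the form $[y_{j_1}, \ldots, y_{j_{n-1}}]$ with $\{j_1,\ldots,j_{n-1}\} = \{1,\ldots,n\}\setminus\{i\}$, and lifts to $u_i^{(2)} := [x_{j_1}, \ldots, x_{j_{n-1}}] \in \bigcap_{j\neq i}\mathcal N(x_j) \cap \Gamma_{n-1}(RF_n)$. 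Setting $\sigma_2(x_i) := x_i u_i^{(2)}$, any $s$-fold commutator $[\sigma_2(x_{i_1}), \ldots, \sigma_2(x_{i_s})]$ expands into the ``pure $x$'' term $[x_{i_1},\ldots,x_{i_s}]$ together with terms containing at least one $u^{(2)}$-factor, which lie in $\Gamma_{s+(n-2)}(RF_n) \subseteq \Gamma_{n+1}(RF_n) = 1$ as soon as $s \geq 3$; combined with the triviality of $s=2$ relations, this and Proposition~\ref{def2_RF} guarantee that $\sigma_2$ respects every defining relator of $RF_n$. This boundary case is the main obstacle of the proof: because $d_2(y_i)$ sits outside the associative ideal $\langle y_i\rangle_A$, Proposition~\ref{derivations_of_A} does not supply an extension of $d$ to a derivation of $A[n]$, so Magnus-based arguments fail and it is the $n$-nilpotence of $RF_n$ (Proposition~\ref{RF_is_nilp}) that rescues the computation.

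With $\sigma := \sigma_1\sigma_2$ a well-defined endomorphism of $RF_n$, the conclusion is routine. Since the correction terms lie in $\Gamma_{k+1} \subseteq \Gamma_2$, $\sigma$ induces the identity on $RF_n^{ab} \cong \Z^n$, so by a standard induction on nilpotency class $\sigma$ is surjective, and the Hopfian property of finitely generated nilpotent groups then makes it an automorphism. The containment $\sigma(x_i) x_i^{-1} \in \Gamma_{k+1}(RF_n)$ propagates to $[\sigma, \Gamma_j(RF_n)] \subseteq \Gamma_{j+k}(RF_n)$ for all $j$ by the strong centrality of the lower central series, placing $\sigma$ in $\mathcal A_k(RF_n)$. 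Finally, since $\tau$ is a Lie algebra morphism, $\tau(\bar\sigma) = \tau(\bar\sigma_1) + \tau(\bar\sigma_2) = d_1 + d_2 = d$, as required.
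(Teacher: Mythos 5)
Your proof is correct and follows essentially the same route as the paper's: injectivity from the general theory of the Johnson morphism, and surjectivity by lifting a homogeneous derivation to $x_i \mapsto x_i w_i$ with $w_i$ an iterated-commutator lift, using the abelianness of $\mathcal N(x_i)$ in the generic degrees and the $n$-nilpotence of $RF_n$ in the exceptional degree $k=n-2$. Your splitting $d = d_1 + d_2$ and the verification via the nilpotent presentation of Proposition~\ref{def2_RF} are only minor repackagings of the paper's two-case argument, which instead takes a single lift $w_i \in \Gamma_{n-1}$ when $k = n-2$ and checks $[w_i x_i, w, w_i x_i] \equiv [x_i,w,x_i] \pmod{\Gamma_{n+1}}$ directly.
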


\begin{proof}
Take $|X| = |Y| = n$. Let $d$ be a derivation of $R\mathfrak L[Y]$, of degree $k$. We need to lift it to an automorphism $\varphi$ of $RF[X]$. We first suppose that $k \neq n - 2$. Since $d(y_i) \in \langle y_i \rangle \cap R\mathfrak L_{k+1}[Y]$ (Proposition \ref{dy_in_(y)}), we can write each $d(y_i)$ as a linear combination of Lie monomials of length $k+1$ containing $y_i$. The corresponding product of brackets in $RF[X]$  lifts $d(y_i)$ to an element $w_i$ of $\Gamma_{k+1}(RF[X]) \cap \mathcal N(x_i)$. The element $w_i x_i$ belongs to $\mathcal N(x_i)$, so it commutes with all its conjugates. As a consequence, $x_i \mapsto w_i x_i$ defines an endomorphism $\varphi$ of $RF[X]$. Since $\varphi$ acts trivially on the abelianization of $RF[X]$, which is nilpotent, it is an automorphism \cite[Lem.~2.38]{Darne1}. Moreover, by construction, we have $\tau(\bar \varphi) = d$.

Suppose now that $k = n - 2$. Then $d(y_i)$ can be any element of $R\mathfrak L_{n-1}[Y]$. Choose any lift $w_i \in \Gamma_{n-1}(RF[X])$ of $d(y_i)$. Using the usual formulas of commutator calculus, we see that for any $w \in RF[X]$, $[w_i x_i, w, w_i x_i] \equiv [x_i, w, x_i] \pmod{\Gamma_{n+1}}$. Since $[x_i, w, x_i] = 1$ and $\Gamma_{n+1}(RF_n) = \{1\}$, we conclude that $[w_i x_i, w, w_i x_i]=1$, which means exactly that $w_i x_i$ commutes with all its conjugate. The same construction as in the first case then gives an automorphism $\varphi \in \mathcal A_{n-2}$ such that $\tau(\bar \varphi) = d$.
\end{proof}

\subsection{Tangential derivations}

\begin{defi}
A \emph{tangential derivation} of $R\mathfrak L[Y]$ is a derivation sending each $y \in Y$ to an element of the form $[y,w_y]$ (for some $w_y \in R\mathfrak L[Y]$).
\end{defi}

\begin{fait}
The subset $\Der_{\tau}(R\mathfrak L[Y])$ of tangential derivations is a Lie subalgebra of $\Der(R\mathfrak L[Y])$.
\end{fait}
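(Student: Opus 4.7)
The plan is to verify the two conditions for a Lie subalgebra: closure under $\Z$-linear combinations, and closure under the bracket.

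For linear combinations, the argument is immediate: if $d_1(y)=[y,u_y]$ and $d_2(y)=[y,v_y]$, then $(\lambda d_1+\mu d_2)(y)=[y,\lambda u_y+\mu v_y]$, so $\Der_{\tau}(R\mathfrak L[Y])$ is a submodule. The substantive point is closure under the commutator $[d_1,d_2]:=d_1 d_2 - d_2 d_1$. I would just compute $[d_1,d_2](y)$ directly, applying the Leibniz rule to $d_1([y,v_y])$ and $d_2([y,u_y])$:
\[
[d_1,d_2](y)=[d_1(y),v_y]+[y,d_1(v_y)]-[d_2(y),u_y]-[y,d_2(u_y)].
\]
Substituting $d_1(y)=[y,u_y]$ and $d_2(y)=[y,v_y]$ gives terms of two types: terms already of the form $[y,-]$, namely $[y,d_1(v_y)-d_2(u_y)]$, and two ``extra'' terms $[[y,u_y],v_y]-[[y,v_y],u_y]$.

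The key step is then an application of the Jacobi identity in $R\mathfrak L[Y]$ to rewrite these extra terms as a single bracket starting with $y$:
\[
[[y,u_y],v_y]-[[y,v_y],u_y]=[y,[u_y,v_y]].
\]
Combining, one obtains
\[
[d_1,d_2](y)=[y,\,[u_y,v_y]+d_1(v_y)-d_2(u_y)],
\]
which is of the required tangential form with $w_y:=[u_y,v_y]+d_1(v_y)-d_2(u_y)\in R\mathfrak L[Y]$. This exhibits $[d_1,d_2]$ as tangential and completes the verification.

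I do not expect any real obstacle; the only thing to be slightly careful about is that the computation takes place in the Lie algebra $R\mathfrak L[Y]$ and uses only the Leibniz rule and Jacobi, both of which hold there. Since $\Der(R\mathfrak L[Y])$ is already known to be a Lie algebra under the commutator of derivations, this calculation is enough to conclude that $\Der_{\tau}(R\mathfrak L[Y])$ is a Lie subalgebra.
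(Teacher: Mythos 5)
Your computation is correct and is exactly the ``elementary calculation'' the paper invokes: the paper's proof consists of stating the same identity $[d,d'](y) = [y, [w_y, w_y'] + d(w_y') - d'(w_y)]$, which you derive in full via the Leibniz rule and the Jacobi identity. No differences in approach, and no gaps.
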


\begin{proof}
Let $d: y \mapsto [y, w_y]$ and $d': y \mapsto [y, w_y']$. Then an elementary calculation gives:
\begin{equation}\label{[d,d']}
[d,d'](y) = [y, [w_y, w_y'] + d(w_y') - d'(w_y)],
\end{equation}
whence the result.
\end{proof}

\begin{prop}\label{tangential_der}
Let $n \geq 2$ be an integer. The Lie subalgebra of $\Der(R\mathfrak L_n)$ generated in degree $1$ is the subalgebra $\Der_{\tau}(R\mathfrak L_n)$ of tangential derivations.
\end{prop}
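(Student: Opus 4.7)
The plan is to prove both inclusions. The inclusion from left to right is easy: by Proposition \ref{dy_in_(y)}, a degree-$1$ derivation $d$ sends each $y_i$ to an element of $\langle y_i \rangle \cap R\mathfrak L_n[2]$, which is the $\Z$-span of $\{[y_i,y_j] : j \neq i\}$. Hence $d(y_i) = [y_i, \sum_j c_{ij} y_j]$ is tangential, and since $\Der_\tau$ is a Lie subalgebra (the preceding Fact), the Lie subalgebra generated in degree $1$ is contained in $\Der_\tau$.

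For the converse, denote by $\mathfrak D$ the Lie subalgebra generated in degree $1$, and set $\chi_{ij} \in \mathfrak D$ to be the degree-$1$ tangential derivation sending $y_i$ to $[y_i,y_j]$ and $y_l$ to $0$ for $l \neq i$. By Corollary \ref{graded_der} we may restrict to homogeneous tangential $d$; writing $d(y_i) = [y_i, w_i]$, I decompose $d = \sum_i d^{(i)}_{w_i}$, where $d^{(i)}_w$ is the tangential derivation sending $y_i$ to $[y_i, w]$ and $y_l$ to $0$ for $l \neq i$ (well-defined by Proposition \ref{dy_in_(y)}, since $[y_i, w] \in \langle y_i \rangle \subseteq J_{y_i}$). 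It thus suffices to show that $d^{(i)}_w \in \mathfrak D$ for every $i$ and every homogeneous $w \in R\mathfrak L_n$.

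I do this by induction on $\deg(w)$. The base case $\deg(w)=1$ is immediate: $w$ is a linear combination of the $y_j$ and $d^{(i)}_w$ is the corresponding linear combination of the $\chi_{ij}$. For the inductive step, I first observe that $d^{(i)}_w$ depends on $w$ only modulo $C_{\mathfrak L}(y_i) = \langle y_i \rangle$ (Lemma \ref{centralizers_in_RL}); since $\langle y_i \rangle$ is spanned by Lie monomials involving $y_i$, we may replace $w$ by a Lie polynomial in $\{y_j\}_{j\neq i}$. Within the sub-Lie-algebra generated by $\{y_j\}_{j\neq i}$, any element of degree $\geq 2$ decomposes, using Jacobi to normalize to right-normed brackets, as $w = \sum_s [y_{j_s}, w'_s]$ with $j_s \neq i$ and each $w'_s$ a Lie polynomial of lower degree not involving $y_i$. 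The key identity, obtained by direct computation using the derivation property and one application of the Jacobi identity, is
\[
[\chi_{ij},\, d^{(i)}_{w'}] \;=\; d^{(i)}_{[y_j, w'] + \chi_{ij}(w')},
\]
which, since $w'_s$ does not involve $y_i$ (so $\chi_{ij_s}(w'_s) = 0$), reduces to $[\chi_{ij_s}, d^{(i)}_{w'_s}] = d^{(i)}_{[y_{j_s}, w'_s]}$. Summing, $d^{(i)}_w = \sum_s [\chi_{ij_s}, d^{(i)}_{w'_s}]$, and the inductive hypothesis places each $d^{(i)}_{w'_s}$ in $\mathfrak D$, hence $d^{(i)}_w \in \mathfrak D$.

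The main technical obstacle is verifying the bracket identity above; the computation is a careful application of Jacobi to rewrite $[[y_i,y_j],w'] - [[y_i,w'],y_j]$ as $[y_i,[y_j,w']]$, but one has to keep track of the correction term $\chi_{ij}(w')$ that arises whenever $y_i$ occurs inside $w'$. The trick that makes everything clean is the preliminary reduction to $w$ not involving $y_i$, which kills this correction and lets the induction go through.
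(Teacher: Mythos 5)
Your proof is correct and follows essentially the same route as the paper: both arguments decompose a tangential derivation into the pieces $d^{(i)}_{w_i}$ supported on a single generator, use the commutator formula \eqref{[d,d']} together with Lemma \ref{centralizers_in_RL} (so that $d^{(i)}_w$ only depends on $w$ modulo $\langle y_i\rangle$), and conclude that each piece lies in the subalgebra generated by the $d_{ij}$. Your induction on $\deg(w)$ via the identity $[\chi_{ij}, d^{(i)}_{w'}] = d^{(i)}_{[y_j,w']}$ is exactly an unpacked version of the paper's observation that $t \mapsto (y_i \mapsto [y_i,t])$ is a surjective Lie-algebra morphism onto $D_i$ with kernel $\langle y_i\rangle$.
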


\begin{proof}
Consider the derivation $d_{ij}$ sending $y_i$ to $[y_i, y_j]$ and all the other $y_k$ to $0$. From Lemma \ref{dy_in_(y)}, we know that these generate the module of derivations of degree $1$. They are tangential derivations, so the Lie subalgebra they generate is inside $\Der_{\tau}(R\mathfrak L[Y])$. Consider the set $D_i$ of tangential derivations sending all $y_k$ to $0$, save the $i$-th one. Such derivations vanish on all monomials which are not in $\langle y_i \rangle$, and preserve $\langle y_i \rangle$. Since elements of $\langle y_i \rangle$ commute with $y_i$, formula \eqref{[d,d']} implies that the following map is a morphism:
\[c_i:
\left\{\begin{array}{clc}
\mathcal R\mathfrak L[Y] &\longrightarrow & D_i \\
t               &\longmapsto     & (y_i \mapsto [y_i,t])).
\end{array}  \right.\]
It is obviously surjective, so that $D_i$ is a Lie subalgebra of $\Der_{\tau}(R\mathfrak L[Y])$. Moreover, its kernel is $\langle y_i \rangle$ (Lemma \ref{centralizers_in_RL}), so that $D_i \cong R \mathfrak L[Y]/y_i$ is in fact the free reduced Lie algebra on the $c_i(y_j) = d_{ij}$ (for $j \neq i$). Since $\Der_{\tau}(R\mathfrak L[Y])$ is the (linear) finite direct sum of the $D_i$, it is indeed generated (as a Lie algebra) by the $d_{ij}$.
\end{proof}

Recall that the \emph{McCool group} $P\Sigma_X$ is the group of automorphisms of the free group $F[X]$ on a set $X$ fixing the conjugacy class of each generator $x \in X$.
\begin{defi}
The \emph{reduced McCool group} $hP\Sigma_X$ is the subgroup of $\Aut(RF[X])$ preserving the conjugacy class of each generator $x \in X$ of $RF[X]$.
\end{defi}
This group $hP\Sigma_X$ is also called $\Aut_C(RF_X)$, but we prefer to think of it as version of $P\Sigma_X$ \emph{up to homotopy} (this terminology will be explained in \cref{Section4}). When $X$ is finite, we denote its elements by $x_1, ..., x_n$ and $hP\Sigma_X$ by $hP\Sigma_n$.

\medskip

Consider the filtration $\mathcal A_*(RF_n)$ on $\Aut(RF_n)$. It restricts to a filtration $hP\Sigma_n \cap \mathcal A_*(RF_n)$ on $hP\Sigma_X$. Moreover, since $\mathcal A_*(RF_n)$ is strongly central on the subgroup $\mathcal A_1(RF_n)$ of automorphisms acting trivially on $RF_n^{ab}$, and $hP\Sigma_n \subset \mathcal A_1(RF_n)$, this induced filtration is strongly central on $hP\Sigma_n$.

\begin{theo}\label{Johnson_conj_iso}
Let $n \geq 2$ be an integer. The Johnson morphism induces an isomorphism:
\[\Lie \left(hP\Sigma_n \cap \mathcal A_*(RF_n)\right) \cong \Der_{\tau}(R\mathfrak L_n)\]
\end{theo}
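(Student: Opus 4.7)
The plan is to identify the statement as the restriction of the Johnson isomorphism of Theorem~\ref{Johnson_iso} to a Lie subalgebra, so that injectivity of $\tau$ comes for free; only two things remain to prove: the image lies in $\Der_\tau(R\mathfrak L_n)$, and every tangential derivation has a preimage.

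For the inclusion of the image in $\Der_\tau$, take $\sigma \in hP\Sigma_n \cap \mathcal A_k$ and write $\sigma(x_i) = g_i^{-1} x_i g_i$ using the definition of $hP\Sigma_n$. The crux is to arrange for the conjugator $g_i$ to lie in $\Gamma_k(RF_n)$: if $g_i$ only lies in $\Gamma_\ell$ with $\ell < k$, the assumption $\sigma \in \mathcal A_k$ forces $[y_i, \bar g_i] = 0$ in $\Lie_{\ell+1}(RF_n)$, hence $\bar g_i \in \langle y_i \rangle$ by Lemma~\ref{centralizers_in_RL}. Every class in $\langle y_i \rangle$ is the class of an iterated commutator involving $x_i$, which belongs to $\mathcal N(x_i) = C(x_i)$ by Lemma~\ref{centralizers_in_RF}; absorbing such a lift into $g_i$ leaves the conjugate $g_i^{-1} x_i g_i$ unchanged but raises the degree of $g_i$, and iteration places it in $\Gamma_k$. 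Once this is done,
\[\tau(\bar\sigma)(y_i) = \overline{[g_i^{-1}, x_i]} = [y_i, \bar g_i],\]
exhibiting $\tau(\bar\sigma)$ as a tangential derivation.

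For surjectivity, given a tangential derivation $d$ of degree $k$ with $d(y_i) = [y_i, t_i]$, I will lift each $t_i$ to some $u_i \in \Gamma_k(RF_n)$ and set $\varphi(x_i) := u_i^{-1} x_i u_i$. The key point is that $\varphi(x_i) \in \mathcal N(x_i)$, which is abelian by Lemma~\ref{centralizers_in_RF}, so $\varphi(x_i)$ commutes with all of its conjugates and $\varphi$ respects the defining relations of $RF_n$, giving a well-defined endomorphism. Combining trivial action on the abelianization with nilpotence shows that $\varphi$ is in fact an automorphism (\cite[Lem.~2.38]{Darne1}); it obviously lies in $hP\Sigma_n$, and the commutator $[\varphi, x_i] = [u_i^{-1}, x_i]$ lies in $\Gamma_{k+1}$, placing $\varphi$ in $\mathcal A_k$ with $\tau(\bar\varphi)(y_i) = [y_i, t_i] = d(y_i)$.

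The genuinely delicate step is the degree-reduction argument for the conjugators $g_i$ in the first half, which rests on the fact that the associated graded of the filtration $\mathcal N(x_i) \cap \Gamma_*(RF_n)$ surjects onto the Lie ideal $\langle y_i \rangle$. By contrast, the degree-$(n-2)$ subtlety that appeared in the proof of Theorem~\ref{Johnson_iso} disappears here, because the tangential hypothesis $d(y_i) \in \langle y_i \rangle$ already provides lifts inside $\mathcal N(x_i)$, so no ad hoc commutator identity is needed to enforce the reduced-free relations on $\varphi$.
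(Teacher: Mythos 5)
Your proof is correct, and it differs from the paper's on both halves in an instructive way. For the containment of the image in $\Der_\tau(R\mathfrak L_n)$, the paper simply asserts that $\tau(\bar\varphi)(y_i) = [y_i,\bar w_i]$ with $\bar w_i$ a class in $\Gamma_k/\Gamma_{k+1}$; your degree-reduction argument for the conjugators --- using Lemma~\ref{centralizers_in_RL} to conclude $\bar g_i \in \langle y_i\rangle$ when the degree is too low, then absorbing a lift taken in $\mathcal N(x_i) = C(x_i)$ (Lemma~\ref{centralizers_in_RF}) into $g_i$ without changing the conjugate --- is precisely the justification that assertion needs, and it is the same mechanism the paper deploys later in the proof of Theorem~\ref{Andreadakis_for_wP_n}. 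For surjectivity, the paper takes a shorter route: the image of $\Lie(hP\Sigma_n \cap \mathcal A_*)$ under $\tau$ is a Lie subalgebra containing the $d_{ij} = \tau(\bar\chi_{ij})$, and Proposition~\ref{tangential_der} says these generate $\Der_\tau(R\mathfrak L_n)$, so nothing further is required. You instead lift an arbitrary tangential derivation degree by degree, mimicking the proof of Theorem~\ref{Johnson_iso}: this avoids Proposition~\ref{tangential_der} entirely, at the cost of re-running the lifting construction, and your observation that the degree-$(n-2)$ anomaly of Theorem~\ref{Johnson_iso} disappears because $x_i^{u_i}$ is automatically a conjugate of $x_i$ (hence lies in the abelian group $\mathcal N(x_i)$) is correct. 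Both routes are valid; the paper's is shorter because Proposition~\ref{tangential_der} is already on hand, while yours is self-contained and produces an explicit preimage for every tangential derivation.
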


\begin{proof}
Let $\varphi: x_i \mapsto x_i^{w_i}$ be a basis-conjugating automorphism belonging to $\mathcal A_k - \mathcal A_{k+1}$. Then $\tau(\varphi)(y_i) = [y_i, \bar w_i]$ (where $\bar w_i$ is the class of $w_i$ in $\Gamma_k/\Gamma_{k+1}$), so the Johnson morphism sends $\Lie(hP\Sigma_n \cap \mathcal A_*(RF_n))$ into $\Der_{\tau}$. Moreover, it is injective by Theorem \ref{Johnson_iso}, and since $\tau(\chi_{ij}) = d_{ij}$, Proposition \ref{tangential_der} implies that it is surjective.
\end{proof}

Theorem \ref{Johnson_conj_iso}, together with Proposition \ref{tangential_der}, have an interesting consequence: \textbf{the group $hP\Sigma_n$ is maximal among subgroups of $\Aut(RF_n)$ for which the Andreadakis equality can be true.} 
Indeed, let $hP\Sigma_n \subsetneq G \subseteq \Aut(RF_n)$, and consider the comparison morphism $i_*: \Lie(G) \rightarrow \Lie(G\cap \mathcal A_*)$ obtained from the inclusion of $\Gamma_*G$ into $G\cap \mathcal A_*$. On the one hand, the Lie algebra $\Lie(G\cap \mathcal A_*)$ contains $\Lie(hP\Sigma_n \cap \mathcal A_*)$, and this inclusion must be strict, otherwise we could argue as in the proof of Lemma \ref{gen_of_nilp} to show that $G = hP\Sigma_n$. On the other hand, $\Lie(G)$ is generated in degree $1$, so that $i_*(\Lie(G)) \subseteq \Lie(hP\Sigma_n \cap \mathcal A_*)$, the latter being the subalgebra of $\Lie\left(\mathcal A_*(RF_n)\right) \cong \Der(R\mathfrak L_n)$ generated by its degree one. As a consequence, $i_*$ cannot be surjective, whence the conclusion.

\medskip
Here is another consequence of these theorems:

\begin{cor}\label{generators_of_hMcCool}
The group $hP\Sigma_n$ is generated by the $\chi_{ij}$ ($i \neq j$), and $hP\Sigma_n^{ab}$ identifies with the free abelian group generated by the $\overline \chi_{ij}$.
\end{cor}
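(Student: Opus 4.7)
The plan is to deduce both statements from the two previous theorems via a standard nilpotence argument. By Theorem \ref{Johnson_conj_iso} the Johnson morphism identifies $\Lie(hP\Sigma_n \cap \mathcal A_*(RF_n))$ with $\Der_{\tau}(R\mathfrak L_n)$, and Proposition \ref{tangential_der} asserts that the latter is generated as a Lie algebra by its degree-one part, which is free abelian on the $d_{ij} = \tau(\overline{\chi_{ij}})$ for $i \neq j$. Moreover $hP\Sigma_n$ is nilpotent: since $RF_n$ is $n$-nilpotent by Proposition \ref{RF_is_nilp}, one checks directly from \eqref{def_A_*} (taking $i = 1$ and $j = n$) that $\mathcal A_n(RF_n) = \{1\}$, so $\mathcal A_*(RF_n)$ is a bounded strongly central filtration on $\mathcal A_1(RF_n) \supseteq hP\Sigma_n$.

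To prove generation, set $G = hP\Sigma_n$ and let $H \subseteq G$ be the subgroup generated by the $\chi_{ij}$. I will show by induction on $k$ that $G = H \cdot (G \cap \mathcal A_k(RF_n))$; since the filtration is bounded, this implies $G = H$ after finitely many steps. The case $k = 1$ is trivial. For the inductive step, pick $g \in G \cap \mathcal A_k$: its class $\overline g \in \Lie_k(G \cap \mathcal A_*)$ corresponds under $\tau$ to an element of the degree-$k$ part of $\Der_{\tau}(R\mathfrak L_n)$, which by Proposition \ref{tangential_der} is a $\Z$-linear combination of iterated brackets of length $k$ in the $d_{ij}$. Lifting each such Lie bracket to the corresponding iterated group commutator of the $\chi_{ij}$ produces an element $h \in H \cap (G \cap \mathcal A_k)$ with $\overline h = \overline g$ in $\Lie_k(G \cap \mathcal A_*)$; then $gh^{-1} \in G \cap \mathcal A_{k+1}$, so $G \cap \mathcal A_k \subseteq H \cdot (G \cap \mathcal A_{k+1})$, and combining with the inductive hypothesis yields $G = H \cdot (G \cap \mathcal A_{k+1})$.

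For the second statement, the inclusion $\Gamma_2 G \subseteq G \cap \mathcal A_2(RF_n)$ (automatic since $\mathcal A_*(RF_n)$ is strongly central on $\mathcal A_1(RF_n)$) induces a surjection $\pi : G^{ab} \twoheadrightarrow G/(G \cap \mathcal A_2) = \Lie_1(G \cap \mathcal A_*)$ which, through $\tau$, lands onto the degree-one part of $\Der_{\tau}(R\mathfrak L_n)$; this target is free abelian on the $d_{ij}$, and $\pi$ sends $\overline{\chi_{ij}}$ to $d_{ij}$. Since the first part of the corollary shows that $G^{ab}$ is generated by the classes $\overline{\chi_{ij}}$, any linear relation among them would push down to a relation among the $d_{ij}$, forcing it to be trivial. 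Hence $\pi$ is an isomorphism and $G^{ab}$ is free abelian on the $\overline{\chi_{ij}}$.

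The only point requiring care is that the Lie algebra appearing in Theorem \ref{Johnson_conj_iso} is that of the Andreadakis filtration $G \cap \mathcal A_*$ and not a priori that of the lower central series $\Gamma_* G$ (the equality of these two is precisely the statement of Theorem \ref{Andreadakis_for_wP_n}, proved later and not used here); the argument sidesteps this by running induction directly on the Andreadakis filtration, which is what Theorem \ref{Johnson_conj_iso} controls. Apart from this, the proof is entirely formal, the main work having already been done in Proposition \ref{tangential_der} and Theorem \ref{Johnson_conj_iso}.
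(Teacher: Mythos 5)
Your proof is correct and follows essentially the same route as the paper: it combines Theorem \ref{Johnson_conj_iso} and Proposition \ref{tangential_der} to see that the classes of the $\chi_{ij}$ generate $\Lie(hP\Sigma_n \cap \mathcal A_*(RF_n))$, then lifts generation to the group using the finiteness of the Andreadakis filtration, and deduces the abelianization statement from the linear independence of the $d_{ij}$ in $\Der_1(R\mathfrak L_n)$. The only difference is that you re-prove inline, by the explicit induction $G = H\cdot(G\cap\mathcal A_k)$, what the paper delegates to Lemma \ref{gen_of_nilp}.
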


In particular, the canonical morphism from $P\Sigma_n$ to $hP\Sigma_n$ is surjective. This means that that when it comes to basis-conjugating automorphisms, all automorphisms of $RF_n$ are \emph{tame}. This is in striking contrast with the case of free nilpotent groups \cite[\S 2.6]{Darne1}. This fact is in fact obvious from the geometrical interpretation (recalled in \cref{Section4}), but we give an algebraic proof here, using much less machinery.

\begin{proof}[Proof of cor. \ref{generators_of_hMcCool}]
Thanks to Proposition \ref{tangential_der} and Theorem \ref{Johnson_conj_iso}, we know that the classes of the $\chi_{ij}$ in $\Lie(\mathcal A_* \cap hP\Sigma_n)$ generate this Lie ring. By applying Lemma \ref{gen_of_nilp} to the finite filtration $\mathcal A_* \cap hP\Sigma_n$, we deduce that the $\chi_{ij}$ generate $hP\Sigma_n$.

As a consequence, the $\overline \chi_{ij}$ generate its abelianization. Moreover, the Johnson morphism from $hP\Sigma_n^{ab}$ to $\Der_1(R\mathfrak L_n)$ sends the $\overline \chi_{ij}$ to the linearly independant elements $d_{ij}$ of $\Der_1(R\mathfrak L_n)$. Thus the $\overline \chi_{ij}$ are a basis of $hP\Sigma_n^{ab}$.
\end{proof}

We can also use the proof of Proposition \ref{tangential_der} to compute the \emph{Hirsch rank} of the nilpotent group $hP\Sigma_n$ (which is the rank of any associated Lie algebra). We recover the formula from \cite[Rk.\ 4.9]{AMW}:

\begin{cor}
The Hirsch rank of the reduced McCool group is:
\[\rk(hP\Sigma_n) = \rk(\Der_{\tau}(R\mathfrak L_n)) = n \cdot \rk(R\mathfrak L_{n-1}) = \sum\limits_{k=1}^{n-1} \frac{n!}{(n-k-1)!\cdot k}\cdot\]
\end{cor}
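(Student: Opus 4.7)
The plan is to reduce everything to a computation of the rank of the associated graded Lie ring, using the Johnson isomorphism obtained in Theorem~\ref{Johnson_conj_iso} together with the decomposition of tangential derivations that already appeared in the proof of Proposition~\ref{tangential_der}.

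First, I would justify the equality $\rk(hP\Sigma_n) = \rk(\Der_\tau(R\mathfrak L_n))$. By Theorem~\ref{Andreadakis_for_wP_n}, the lower central series of $hP\Sigma_n$ coincides with $hP\Sigma_n \cap \mathcal A_*(RF_n)$, so these two strongly central filtrations give the same associated graded Lie ring. Since $RF_n$ is nilpotent (Proposition~\ref{RF_is_nilp}), so is $\mathcal A_*(RF_n)$, hence this filtration is finite and $hP\Sigma_n$ is a finitely generated nilpotent group, for which the Hirsch rank equals the rank of the associated graded Lie ring over any strongly central series with finitely generated quotients. By Theorem~\ref{Johnson_conj_iso}, this Lie ring is isomorphic to $\Der_\tau(R\mathfrak L_n)$. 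Torsion-freeness is not an issue because $\Der_\tau(R\mathfrak L_n)$ sits inside the derivations of a free Lie ring, hence is a free abelian group of finite rank.

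Next, I would compute $\rk(\Der_\tau(R\mathfrak L_n))$. The proof of Proposition~\ref{tangential_der} exhibits $\Der_\tau(R\mathfrak L_n)$ as the internal direct sum $\bigoplus_{i=1}^n D_i$, where $D_i$ is the space of tangential derivations killing every generator except $y_i$, and where the surjection $c_i : R\mathfrak L[Y] \twoheadrightarrow D_i$, $t \mapsto (y_i \mapsto [y_i,t])$, has kernel $\langle y_i\rangle$ (by Lemma~\ref{centralizers_in_RL}). Therefore $D_i \cong R\mathfrak L[Y]/\langle y_i\rangle \cong R\mathfrak L_{n-1}$, which gives $\rk(\Der_\tau(R\mathfrak L_n)) = n\cdot \rk(R\mathfrak L_{n-1})$.

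Finally, I would invoke Proposition~\ref{rk_of_RL}, which gives $\rk(R\mathfrak L_{n-1}) = \sum_{k=1}^{n-1}(k-1)!\binom{n-1}{k}$ (the sum stops at $k=n-1$ because any word of greater length in $n-1$ letters must repeat). Multiplying by $n$ and rewriting
\[
n\cdot (k-1)!\binom{n-1}{k}
= \frac{n\,(n-1)!}{k\,(n-1-k)!}
= \frac{n!}{(n-k-1)!\cdot k}
\]
yields the announced closed form. I do not expect any real obstacle here: the whole argument is a bookkeeping exercise once the direct-sum decomposition from the proof of Proposition~\ref{tangential_der} is read off carefully and Theorem~\ref{Andreadakis_for_wP_n} is used to pass from the Andreadakis filtration to the lower central series (and hence to the Hirsch rank).
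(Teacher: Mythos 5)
Your proposal is correct and follows essentially the same route as the paper: read the rank off the graded Lie ring via Theorem~\ref{Johnson_conj_iso}, use the direct-sum decomposition $\Der_\tau(R\mathfrak L_n)=\bigoplus_i D_i$ with $D_i\cong R\mathfrak L_{n-1}$ from the proof of Proposition~\ref{tangential_der}, and conclude with Proposition~\ref{rk_of_RL}. One small remark: your invocation of Theorem~\ref{Andreadakis_for_wP_n} is superfluous (and, in the paper's ordering, would be an appeal to a result proved only later), since --- as you yourself observe --- the Hirsch rank equals the rank of the graded Lie ring of \emph{any} finite strongly central filtration with finitely generated quotients, so one can work directly with $hP\Sigma_n\cap\mathcal A_*(RF_n)$ without passing to the lower central series; this is exactly what the paper does.
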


\begin{proof}
The first equality is a direct consequence of Theorem \ref{Johnson_conj_iso}. The second one stems from the proof of Proposition \ref{tangential_der}, where we have shown that $\Der_{\tau}(R\mathfrak L_n)$ is (linearly) a direct sum of $n$ copies $D_i$ of $R\mathfrak L_{n-1}$. The last one is a direct application of Proposition \ref{rk_of_RL}.
\end{proof}

\section{The Andreadakis problem}\label{section_Andreadakis}

The McCool group $P\Sigma_n$ is generated by the elements $\chi_{ij}: x_i \mapsto x_i^{x_j}$ ($\chi_{ij}$ fixed all the other $x_t$). The following relations, called the \emph{McCool relations}, are known to define a presentation of the McCool group $P\Sigma_n$ \cite{McCool}. The reader can easily check that they are satisfied in $P\Sigma_n$: 
\[\begin{cases}
[\chi_{ik}\chi_{jk}, \chi_{ij}] = 1 &\text{ for $i,j,k$ pairwise distinct,} \\
[\chi_{ik}, \chi_{jk}] = 1 &\text{ for $i,j,k$ pairwise distinct,} \\
[\chi_{ij},\chi_{kl}] = 1 &\text{ if } \{i,j\} \cap \{k,l\} = \varnothing,
\end{cases}\]

Thanks to Corollary \ref{generators_of_hMcCool}, we know that $hP\Sigma_n$ is naturally a quotient of $P\Sigma_n$. We will give in \cref{par_pstation} three families of relations that need to be added to a presentation of $P\Sigma_n$ in order to get a presentation of $P\Sigma_n$. This will rely on the semi-direct product decomposition that we now describe.

\subsection{A semi-direct product decomposition}

The following decomposition theorem is the central result of the present paper. From it we will deduce the Andreadakis equality for $h P\Sigma_n$ (\cref{par_Andreadakis}) and a presentation of this group  and of its Lie ring (\cref{par_pstation}):

\begin{theo}\label{dec_of_hMcCool}
There is a decomposition of $hP\Sigma_n$ as a semi-direct product:
\[hP\Sigma_n \cong \left[\left( \prod\limits_{i <n} \mathcal N(x_n)/x_i\right) \rtimes (RF_n/x_n)\right] \rtimes hP\Sigma_{n-1},\]
where $\mathcal N(x_n)/x_i$ is the normal closure of $x_n$ inside $RF_n/x_i$, and the action of $RF_n/x_n \cong RF_{n-1}$ on the product is the diagonal one. Moreover, the semi-direct product on the right is an almost direct one.
\end{theo}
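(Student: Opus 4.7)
The plan is to realise the decomposition as two nested split short exact sequences. The outer one comes from the forgetful projection $\pi : hP\Sigma_n \twoheadrightarrow hP\Sigma_{n-1}$ induced by the quotient $p : RF_n \twoheadrightarrow RF_n/x_n = RF_{n-1}$: any $\varphi \in hP\Sigma_n$ stabilises $\mathcal N(x_n)$ because it permutes conjugates of $x_n$, hence descends to an automorphism of $RF_{n-1}$ still preserving the conjugacy class of each $x_i$. An explicit splitting $s$ extends an automorphism of $RF_{n-1}$ by the identity on $x_n$; this is a well-defined endomorphism of $RF_n$ (the defining relations are preserved trivially), and an automorphism by nilpotence \cite[Lem.~2.38]{Darne1}.

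The inner sequence tracks the conjugator of $x_n$. For $\varphi \in H := \ker \pi$, write $\varphi(x_i) = x_i^{w_i}$. By the centralizer computation (Lemma~\ref{centralizers_in_RF}) each $w_i$ is only defined modulo $\mathcal N(x_i)$, hence lives in $RF_n/x_i$; moreover membership in $H$ translates, via the same lemma applied in $RF_{n-1}$, to $\bar w_i \in \mathcal N(x_n)/x_i$ for $i < n$, while $\bar w_n \in RF_n/x_n = RF_{n-1}$ is unconstrained. The assignment $\varphi \mapsto \bar w_n$ is a homomorphism $\pi_n : H \to RF_{n-1}$---the key point being that $\varphi \in H$ acts trivially on the quotient $RF_n/\mathcal N(x_n)$, so that $\varphi(w_n(\psi)) \equiv w_n(\psi)$ modulo $\mathcal N(x_n)$---and it is split by $\bar w \mapsto (x_n \mapsto x_n^w,\; x_i \mapsto x_i)$, a well-defined element of $H$ because conjugation depends on $w$ only modulo $C(x_n) = \mathcal N(x_n)$.

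It remains to identify $K := \ker \pi_n$ with $\prod_{i<n} \mathcal N(x_n)/x_i$. For each $i<n$, the subgroup $H_i \subset K$ of automorphisms fixing every $x_k$ with $k \neq i$ maps bijectively to $\mathcal N(x_n)/x_i$ via $\varphi \mapsto \bar w_i$. The central calculation is pairwise commutation of $H_i$ and $H_j$ for $i \neq j$ (both $<n$): the automorphism $\varphi_j \in H_j$ descends to $RF_n/\mathcal N(x_i)$, where it fixes every $x_k$ with $k \neq j$ and sends $x_j$ to $x_j^{\bar w_j}$ with $\bar w_j$ in the normal closure of $x_n$; it therefore further descends, after killing $x_n$, to the identity, and hence acts trivially on the abelian subgroup $\mathcal N(x_n)/x_i$. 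The action of $RF_{n-1}$ on $K$ via the splitting $s_n$ is then immediate: on each factor $\mathcal N(x_n)/x_i$ it is the natural $RF_{n-1}$-module action given by $x_n^u \mapsto x_n^{wu}$, and it is diagonal on the product.

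The remaining point, that the outer semi-direct product is almost direct, is where I expect the genuine work. One needs to show that $hP\Sigma_{n-1}$ acts trivially on $H^{ab}$. Since $hP\Sigma_{n-1}$ is generated by the $\chi_{jk}$ ($j,k<n$) and $H$ is generated by the $\chi_{in}, \chi_{ni}$ ($i<n$), this reduces to $[\chi_{jk}, \chi_{in}]$ and $[\chi_{jk}, \chi_{ni}]$ lying in $[H,H]$. Disjoint-index cases are immediate from McCool's commuting relations; the genuine cases (such as $[\chi_{jk}, \chi_{jn}]$ for $k \neq j$) are handled by expanding a well-chosen McCool relation like $[\chi_{nk}\chi_{jk}, \chi_{nj}] = 1$, which rewrites the commutator in question as a conjugate---by an element of $H$---of a commutator of two elements of $H$, whence the conclusion by normality of $[H,H]$ in $hP\Sigma_n$.
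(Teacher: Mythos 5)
Your route is the same as the paper's: the outer split projection onto $hP\Sigma_{n-1}$, the inner one onto $RF_{n-1}$ recording the conjugator of $x_n$, and the identification of the innermost kernel with $\prod_{i<n}\mathcal N(x_n)/x_i$. The verifications you do give are correct: the well-definedness of the two retractions and their sections, the constraint $\bar w_i\in\mathcal N(x_n)/x_i$ for $\varphi\in\ker\pi$, the pairwise commutation of the $H_i$ (your argument that an element of $H_j$ fixes $\mathcal N(x_n)/x_i$ pointwise because it fixes $x_n$ and acts trivially modulo $\mathcal N(x_n)$ is a clean variant of the paper's explicit computation with the $\chi_{in}^w$), and the McCool-relation manipulation for almost-directness, which replaces the paper's argument via $hP\Sigma_n^{ab}$ and Corollary~\ref{generators_of_hMcCool} and does go through, since $[H,H]$ is normal in $hP\Sigma_n$.

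There is, however, one step genuinely missing in Step 3: you embed each $\mathcal N(x_n)/x_i$ into $K=\ker\pi_n$ as $H_i$ and show the $H_i$ commute, but you never show that they \emph{exhaust} $K$; a priori $H_1\cdots H_{n-1}$ could be a proper subgroup. The paper gets this for free by applying Lemma~\ref{generating_the_kernel} to the inner semi-direct product, which exhibits the explicit generators $\chi_{in}^w$ of the kernel and then computes them. In your setup the fix is the observation already implicit in your commutation argument: any $\varphi\in K$ fixes $\mathcal N(x_n)$ pointwise (write $u\in\mathcal N(x_n)$ as a product of conjugates $(x_n^{\pm1})^{v}$ and use $x_n^{\varphi(v)}=x_n^{v}$, valid since $\varphi(v)\equiv v \bmod \mathcal N(x_n)$); this makes $\varphi\mapsto(\bar w_i)_{i<n}$ a homomorphism $K\to\prod\mathcal N(x_n)/x_i$ inverse to the product of your embeddings. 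A smaller point of the same nature: you assert that $H$ is generated by the $\chi_{in}$ and $\chi_{ni}$, which is the content of the paper's Lemma~\ref{decomposition_lemma_1} and is not automatic (Lemma~\ref{generating_the_kernel} only yields their $hP\Sigma_{n-1}$-conjugates as generators); your own McCool computations supply the normality argument needed to remove the conjugates, but this should be said rather than assumed, since you use the generation statement to reduce the almost-directness check to generators.
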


We will prove this theorem in three steps. First, we show that $hP\Sigma_n$ decomposes into a semi-direct product $\mathcal K_n \rtimes  hP\Sigma_{n-1} $. Then we investigate the structure of $\mathcal K_n$, which decomposes as $\mathcal K_n' \rtimes \mathcal RF_{n-1}$. Finally, we investigate the structure of $\mathcal K_n'$, which is abelian and decomposes as the direct product of the $\mathcal N(x_n)/x_i$.

\subsubsection*{Step 1: decomposition of $hP\Sigma_n$}

Elements of $hP\Sigma_n$ preserve the conjugacy class of $x_n$, so they preserve its normal closure $\mathcal N(x_n)$. As a consequence, any of these automorphisms induce a well-defined automorphism of $RF_n/\mathcal N(x_n) \cong RF_{n-1}$. In other words, the projection $x_n \mapsto 1$ from $RF_n$ onto $RF_{n-1}$ induces a well-defined morphism $p_n$ from $hP\Sigma_n$ to $hP\Sigma_{n-1}$. Moreover, this morphism is a split projection, a splitting $s_n$ being the map extending automorphisms by making them fix $x_n$. Let us denote by $\mathcal K_n$ the kernel of $p_n$. We thus get our first decomposition:
\begin{equation}\label{dec1}
hP\Sigma_n \cong \mathcal K_n \rtimes hP\Sigma_{n-1} 
\end{equation}
Moreover, it will follow from Lemma \ref{decomposition_lemma_1} that this is indeed an almost direct product: $\mathcal K_n^{ab}$ is generated by the classes of the $\chi_{in}$ and the $\chi_{ni}$. From corollary \ref{generators_of_hMcCool}, we know that these are sent to a linearly independant family in $hP\Sigma_n^{ab}$, so that they freely generate $\mathcal K_n^{ab}$. We thus get a direct product decomposition $hP\Sigma_n^{ab} \cong \mathcal K_n^{ab} \oplus hP\Sigma_{n-1}^{ab}$, as announced.

\subsubsection*{Step 2: structure of $\mathcal K_n$}

We first state an easy result on generators of factors in semi-direct products.

\begin{lem}\label{generating_the_kernel}
Let $G = H \rtimes K$ be a semi-direct product of groups. Suppose given a family $(h_i)$ of elements of $H$, and a family $(k_j)$ of elements of $K$ such that their reunion generate $G$. Then $K$ is generated by the $k_j$, and $H$ is generated by the $h_i^k$, for $k \in K$. 
\end{lem}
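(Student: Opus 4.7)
The plan is to prove the two claims separately, and neither should pose much difficulty since they follow from the general structure of semi-direct products.

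\textbf{Generating $K$.} First I would use the canonical projection $\pi \colon G = H \rtimes K \twoheadrightarrow K$, whose kernel is $H$. Since $\pi(h_i) = 1$ and $\pi(k_j) = k_j$, and by hypothesis the $h_i$ together with the $k_j$ generate $G$, their images under $\pi$ generate $\pi(G) = K$. Hence the $k_j$ alone generate $K$.

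\textbf{Generating $H$.} Next, let $L := \langle h_i^k \mid i \in I,\ k \in K \rangle$, viewed as a subgroup of $H$. By construction $L$ is stable under $K$-conjugation, so $K$ normalizes $L$ and the product $LK$ is a subgroup of $G$. This subgroup contains every $h_i$ (taking $k = 1$) and every $k_j$, hence $LK$ contains the generating set of $G$, so $LK = G$. The uniqueness of the decomposition $G = H \cdot K$ then forces $L = H$: any $h \in H \subseteq G = LK$ can be written $h = \ell k$ with $\ell \in L \subseteq H$ and $k \in K$, and comparing with the trivial decomposition $h = h \cdot 1$ gives $k = 1$, whence $h = \ell \in L$.

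\textbf{Main obstacle.} There is essentially none here; this is a standard fact about semi-direct products. The only point worth highlighting is that it is crucial to take $K$-conjugates (and not merely $H$-conjugates, nor the full normal closure in $G$): the $K$-stability of $L$ is precisely what ensures $LK$ is already a subgroup, and this is what drives the argument.
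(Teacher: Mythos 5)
Your proof is correct and rests on the same key point as the paper's, namely the uniqueness of the decomposition $g = h\cdot k$ in a semi-direct product; the paper just packages both claims into a single explicit word-rewriting argument (pushing the $k_j$ to the right in a word in the generators), whereas you split them into a projection argument for $K$ and a subgroup argument ($LK = G$) for $H$. No gaps.
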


\begin{proof}
Take an element $g \in G$ and write it as a product of $h_i^{\pm 1}$ and $k_j^{\pm 1}$. Then use the formula $kh = ({}^k\! h)k$ to push the $k_j$ to the right. We obtain a decomposition $g = h'k$, where $h' \in H$ is a product of conjugates of the $h_i^{\pm 1}$ by elements of $K$, and $k \in K$ is a product of the $k_j^{\pm 1}$. This decomposition has to be the unique decomposition of $g$ into a product of an element of $H$ followed by and element of $K$. As a consequence, if $g \in H$, then $g = h'$, whereas if $g \in K$, then $g = k$, proving our claim.
\end{proof}

We can apply Lemma \ref{generating_the_kernel} to the $\chi_{ij}$ in $hP\Sigma_n \cong \mathcal K_n \rtimes hP\Sigma_{n-1}$. Indeed, the $\chi_{in}$ and the $\chi_{ni}$ are in $\mathcal K_n$, and that the other $\chi_{ij}$ belong to $hP\Sigma_{n-1}$. Hence, $\mathcal K_n$ is generated by the conjugates of the $\chi_{in}$ and the $\chi_{ni}$ by products of the other $\chi_{ij}$ and their inverses. In fact, more is true:

\begin{lem}\label{decomposition_lemma_1}
The group $\mathcal K_n$ is generated by the $\chi_{in}$ and the $\chi_{ni}$.
\end{lem}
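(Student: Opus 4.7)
The plan is to apply Lemma~\ref{generating_the_kernel} to the decomposition $hP\Sigma_n = \mathcal K_n \rtimes hP\Sigma_{n-1}$ together with the generating set $\{\chi_{ij} : i \neq j\}$ of $hP\Sigma_n$ (Corollary~\ref{generators_of_hMcCool}): this yields that $\mathcal K_n$ is generated by the $hP\Sigma_{n-1}$-conjugates of the $\chi_{in}$ and $\chi_{ni}$ for $i < n$. Writing $H := \langle \chi_{in}, \chi_{ni} : i < n \rangle$, it will therefore suffice to show that $H$ is stable under conjugation by $hP\Sigma_{n-1}$; since that group is generated by the $\chi_{ij}$ with $i, j < n$, the question reduces to checking that $\chi_{ij}^{\pm 1}\gamma\chi_{ij}^{\mp 1} \in H$ for every such $\chi_{ij}$ and every generator $\gamma \in \{\chi_{kn}, \chi_{nk} : k < n\}$ of $H$.

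I would proceed by a case analysis on the position of $k$ relative to $\{i, j\}$, exploiting the McCool relations, which hold in $hP\Sigma_n$ since it is a quotient of $P\Sigma_n$. When $k \notin \{i, j\}$, the third McCool relation gives commutation of $\chi_{ij}$ with both $\chi_{kn}$ and $\chi_{nk}$. When $k = j$ and $\gamma = \chi_{nj}$, the second McCool relation yields $[\chi_{ij}, \chi_{nj}] = 1$. When $k = i$ and $\gamma = \chi_{ni}$, the first McCool relation applied to the triple $(n, i, j)$ reads $[\chi_{nj}\chi_{ij}, \chi_{ni}] = 1$, which rearranges to $\chi_{ij}\chi_{ni}\chi_{ij}^{-1} = \chi_{nj}^{-1}\chi_{ni}\chi_{nj}$ (and analogously $\chi_{ij}^{-1}\chi_{ni}\chi_{ij} = \chi_{nj}\chi_{ni}\chi_{nj}^{-1}$), both visibly in $H$. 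When $k = i$ and $\gamma = \chi_{in}$, the first McCool relation $[\chi_{in}\chi_{jn}, \chi_{ij}] = 1$ expresses $\chi_{ij}\chi_{in}\chi_{ij}^{-1}$ as $\chi_{in}\chi_{jn}(\chi_{ij}\chi_{jn}\chi_{ij}^{-1})^{-1}$, reducing the problem to the single remaining case $\gamma = \chi_{jn}$.

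The main obstacle is this last case, since no McCool relation directly constrains the pair $(\chi_{ij}, \chi_{jn})$. Here the plan is to fall back on a direct computation in $\Aut(RF_n)$: one checks that $\psi := \chi_{ij}\chi_{jn}\chi_{ij}^{-1}$ fixes $x_n$ and all $x_k$ with $k \neq i, j$, acts on $x_j$ as $\chi_{jn}$, and sends $x_i$ to $x_i^c$ with $c = [x_j, x_n^{-1}]$. One then exhibits the element $(\chi_{nj}^{-1}\chi_{in}^{-1}\chi_{nj}\chi_{in}) \cdot \chi_{jn}$ of $H$ and verifies, again generator by generator, that it induces exactly $\psi$: intuitively, conjugating $\chi_{in}$ by $\chi_{nj}$ replaces the conjugator $x_n$ by $x_j x_n x_j^{-1}$, so the commutator prefix produces exactly $c$ as the conjugator on $x_i$. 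The inverse conjugation $\chi_{ij}^{-1}\chi_{jn}\chi_{ij}$ is handled symmetrically by $(\chi_{nj}\chi_{in}^{-1}\chi_{nj}^{-1}\chi_{in}) \cdot \chi_{jn}$. Combining all cases shows that $H$ is $hP\Sigma_{n-1}$-stable, yielding the equality $\mathcal K_n = H$.
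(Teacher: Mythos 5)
Your proposal is correct, and its skeleton (apply Lemma~\ref{generating_the_kernel}, then show $H=\langle \chi_{in},\chi_{ni}\rangle$ is stable under conjugation by the generators of $hP\Sigma_{n-1}$) is exactly the paper's. The divergence is in your ``hard case'' ${}^{\chi_{ij}^{\pm1}}\chi_{jn}$: you resolve it by an explicit computation in $\Aut(RF_n)$ (your identity ${}^{\chi_{ij}}\chi_{jn}=[\chi_{nj}^{-1},\chi_{in}^{-1}]\,\chi_{jn}$ does check out, generator by generator, and in fact already holds in $\Aut(F_n)$), whereas the paper stays entirely inside the McCool relations. The relation you overlooked is the instance $[\chi_{ij}\chi_{nj},\chi_{in}]=1$ of the first McCool family (take $(i,j,k)\mapsto(i,n,j)$, and commute the two left factors by the second family): it says $\chi_{in}$ commutes with $\chi_{ij}\chi_{nj}$, hence ${}^{\chi_{ij}}\chi_{in}={}^{\chi_{nj}^{-1}}\chi_{in}\in H$ directly; feeding this into $[\chi_{in}\chi_{jn},\chi_{ij}]=1$ then handles ${}^{\chi_{ij}}\chi_{jn}$ with no computation in the automorphism group. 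So your reduction in the case $k=i$, $\gamma=\chi_{in}$ went in the less convenient direction --- that case is the one the relations settle first, and the $\gamma=\chi_{jn}$ case follows from it. This difference matters later in the paper: the purely relational proof is reused verbatim for $P\Sigma_n$ (Remark~\ref{Kn_in_wPn}) and, crucially, for the abstractly presented group $\mathcal G_n$ in Step~2 of the proof of Theorem~\ref{The_presentation}, where one cannot evaluate automorphisms on $RF_n$. Your argument proves the lemma as stated, but to serve those later purposes you would still need to derive your identity from the relations (which the two displayed McCool instances above accomplish). One last cosmetic point: your formula $c=[x_j,x_n^{-1}]$ presupposes the right-to-left composition convention; under the opposite convention the conjugator and the witnessing element of $H$ change by the evident inverses, but the conclusion is unaffected.
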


\begin{proof}
We use the above relations to show that the subgroup $H$ of $\mathcal K_n$ generated by the  $\chi_{in}$ and the $\chi_{ni}$ is normal in $hP\Sigma_n$, that is: $[hP\Sigma_n,H] \subseteq H$.

The bracket $[\chi_{in}, \chi_{\alpha\beta}]$ is obviously in $H$ if $\alpha = n$ or $\beta = n$. Otherwise, it is trivial, except possibly when $\alpha = i$ or $\beta = i$. In the first case (since $\chi_{n \beta}$ and $\chi_{i\beta}$ commute):
\[1 = [\chi_{in}, \chi_{n \beta} \chi_{i\beta}] = [\chi_{in}, \chi_{n \beta}] ({}^{\chi_{n \beta}}\! [\chi_{in}, \chi_{i \beta}]),\]
whence $[\chi_{in}, \chi_{i \beta}] \in H.$ In the second case:
\[1 = [\chi_{in}\chi_{\alpha n}, \chi_{\alpha i}] = ({}^{\chi_{in}}\![\chi_{\alpha n}, \chi_{\alpha i}])[\chi_{in}, \chi_{\alpha i}],\]
so, using the first case: $[\chi_{in}, \chi_{\alpha i}] \in H$.

In a similar fashion, the bracket $[\chi_{ni}, \chi_{\alpha\beta}]$ belongs to $G$ if $\alpha = n$ or $\beta = n$. Otherwise, it is trivial, except when $\alpha = i$. But in this case:
\[1 = [\chi_{ni}, \chi_{i\beta} \chi_{n \beta}] = [\chi_{ni}, \chi_{i\beta}] ({}^{\chi_{n \beta}}\! [\chi_{in}, \chi_{n \beta}]),\]
so that $[\chi_{ni}, \chi_{i \beta}] \in H$. Thus, $H$ is stable under conjugation by all generators of $hP\Sigma_n$, so it is normal in $hP\Sigma_n$.
\end{proof}

\begin{rmq}\label{Kn_in_wPn}
We have used only the McCool relations here, so the analogue of Lemma \ref{decomposition_lemma_1} is also true in $P\Sigma_n$.
\end{rmq}

By looking at how elements of $\mathcal K_n$ act on $x_n$, we get a split projection $q_n$ from $\mathcal K_n$ onto $RF_{n-1}$. Namely, if $\varphi \in \mathcal K_n$ is an automorphism sending each $x_i$ to $x_i^{w_i}$, $q_n$ sends $\varphi$ onto the class $\overline w_n \in RF_n/x_n \cong RF_{n-1}$. This is well-defined, because of Lemma \ref{centralizers_in_RF}:
\[x_n^v=x_n^w\ \Leftrightarrow\ x_n^{vw^{-1}}=1\ \Leftrightarrow\ vw^{-1} \in C(x_n) = \mathcal N(x_n)\ \Leftrightarrow\ \overline v = \overline w.\]
Moreover, this defines a morphism. Indeed, if $\varphi$ and $\psi$ send $x_n$ respectively to $x_n^{w_n}$ and $x_n^{v_n}$, then:
\[\psi \varphi (x_n) = \psi(x_n^{w_n}) = x_n^{v_n \psi(w_n)},\]
and since $\psi \in \mathcal K_n$, we have $\overline{\psi(w_n)} = \overline w_n$, whence:
\[q_n(\psi \varphi) = \overline{v_n \psi(w_n)} = \overline v_n \overline w_n = q_n(\psi)q_n(\varphi).\]
This morphism $q_n$ is a retraction of the inclusion $t_n$ of $RF_{n-1} \cong RF_n/x_n$ into $\mathcal K_n$ sending $w \in RF_n$ to the automorphism fixing all $x_i$ save $x_n$, which is sent to $x_n^w$. If we call $\mathcal K_n'$ the kernel of $q_n$, we thus get a decomposition:
\begin{equation}\label{dec2}
\mathcal K_n = \mathcal K_n' \rtimes RF_{n-1}.
\end{equation}

\begin{lem} The above decomposition is $hP\Sigma_{n-1}$-equivariant, with respect to the action of $hP\Sigma_{n-1}$ on $\mathcal K_n$ (and on $\mathcal K_n' \subset \mathcal K_n$) coming from conjugation in $hP\Sigma_n$, and to the canonical action of $hP\Sigma_{n-1}$ on $RF_{n-1}$. Precisely, $q_n$ and $t_n$ are $hP\Sigma_{n-1}$-equivariant morphisms.
\end{lem}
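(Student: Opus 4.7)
My plan is to prove the equivariance by direct computation on the defining formulas for $q_n$ and $t_n$, using the key fact that $s_n(\sigma)$ fixes $x_n$ for every $\sigma \in hP\Sigma_{n-1}$. Throughout, I denote the conjugation action of $hP\Sigma_{n-1}$ on $\mathcal K_n$ by $\sigma \cdot \varphi := s_n(\sigma)\, \varphi\, s_n(\sigma)^{-1}$, and I must show $q_n(\sigma \cdot \varphi) = \sigma \cdot q_n(\varphi)$ and $t_n(\sigma \cdot w) = \sigma \cdot t_n(w)$ for all $\varphi \in \mathcal K_n$ and $w \in RF_{n-1}$.

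First, the equivariance of $q_n$. Given $\varphi \in \mathcal K_n$ with $\varphi(x_n) = x_n^{w_n}$, I would compute
\[
(\sigma \cdot \varphi)(x_n) \;=\; s_n(\sigma)\bigl(\varphi(s_n(\sigma)^{-1}(x_n))\bigr) \;=\; s_n(\sigma)(x_n^{w_n}) \;=\; x_n^{s_n(\sigma)(w_n)},
\]
using that $s_n(\sigma)^{-1}(x_n) = x_n$. Passing to the quotient $RF_n / \mathcal N(x_n) \cong RF_{n-1}$, the image of $s_n(\sigma)(w_n)$ is precisely $\sigma(\overline{w_n})$, because $s_n(\sigma)$ descends along this projection to $\sigma$ by construction of the splitting. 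Hence $q_n(\sigma \cdot \varphi) = \sigma(\overline{w_n}) = \sigma \cdot q_n(\varphi)$.

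Next, the equivariance of $t_n$. Given $w \in RF_{n-1}$, lift it to some $\tilde w \in RF_n$; I would in fact choose $\tilde w$ to be a word in $x_1, \dots, x_{n-1}$ only, which is possible since $RF_{n-1}$ is the reduced free group on these generators. By definition, $t_n(w)$ fixes $x_1, \dots, x_{n-1}$ and sends $x_n$ to $x_n^{\tilde w}$. For $i < n$, I compute
\[
(\sigma \cdot t_n(w))(x_i) \;=\; s_n(\sigma)\bigl(t_n(w)(s_n(\sigma)^{-1}(x_i))\bigr) \;=\; s_n(\sigma)(s_n(\sigma)^{-1}(x_i)) \;=\; x_i,
\]
because $s_n(\sigma)^{-1}(x_i)$ lies in the subgroup generated by $x_1,\dots, x_{n-1}$, on which $t_n(w)$ acts trivially. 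For $x_n$, I get
\[
(\sigma \cdot t_n(w))(x_n) \;=\; s_n(\sigma)(x_n^{\tilde w}) \;=\; x_n^{s_n(\sigma)(\tilde w)} \;=\; x_n^{\sigma(\tilde w)},
\]
the last equality using that $\tilde w$ lies in the $x_1,\dots,x_{n-1}$-subgroup, where $s_n(\sigma)$ agrees with $\sigma$. Thus $\sigma \cdot t_n(w) = t_n(\sigma(w))$, as required.

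The only subtlety, and the one point I would double-check carefully, is the well-definedness already used in the paper: $t_n(w)$ depends only on the class of $\tilde w$ modulo $\mathcal N(x_n) = C(x_n)$ (by Lemma \ref{centralizers_in_RF}), so the choice of lift is immaterial. Given this, both computations are one-line verifications, and no further ingredients are needed.
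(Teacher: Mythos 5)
Your proof is correct and follows essentially the same route as the paper's: a direct computation on generators using that $s_n(\sigma)$ fixes $x_n$, agrees with $\sigma$ on $x_1,\dots,x_{n-1}$, and descends to $\sigma$ under the projection $RF_n \twoheadrightarrow RF_n/\mathcal N(x_n) \cong RF_{n-1}$. The paper's version is merely terser (it suppresses $s_n$ from the notation and checks the action of both sides of the $t_n$-equivariance on each generator), and your remark on the well-definedness of $t_n$ via Lemma \ref{centralizers_in_RF} matches the paper's earlier justification.
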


\begin{proof}
If $\varphi \in \mathcal K_n$ sends $x_i$ to $x_i^{w_i}$ as above, and $\chi \in hP\Sigma_{n-1}$, then $\chi \varphi \chi^{-1}$ sends $x_n$ to $x_n^{\chi(w_n)}$, so that:
\[q_n(\chi \varphi \chi^{-1}) = \overline{\chi(w_n)} = \chi(\overline{w_n}) =\chi(q_n(\varphi)).\]
As for the equivariance of $t_n$, if $w \in RF_{n-1}$, both $\chi \cdot t_n(w) \cdot \chi^{-1}$ and $t_n(\chi(w))$ fix all $x_i$ save $x_n$, the latter being sent to $x_n^{\chi(w)}$, hence they are equal.
\end{proof}

\begin{rmq}\label{dec_of_Kn_in_wPn}
A similar decomposition holds in $hP\Sigma_n$, replacing $RF_{n-1}$ by $F_{n-1}$. The same proof works, replacing the equality $C(x_n) = \mathcal N(x_n)$ (which is not true in this case) by the inclusion $C(x_n) \subset \mathcal N(x_n)$.
\end{rmq}

\subsubsection*{Step 3: structure of $\mathcal K_n'$}

So far, we have not really used the fact that that we consider welded braids \emph{up to homotopy} (that is, automorphisms of $RF_n$, not of $F_n$). In fact, the analogues of the decomposition results above are true in the group $P\Sigma_n$ of welded braids (see Remarks \ref{Kn_in_wPn} and \ref{dec_of_Kn_in_wPn}). We now come to the part where the homotopy relation plays a crucial role. That is, we are going to use the relations defining $RF_n$ in a crucial way. These relations, saying that each element $x_i$ of the fixed basis commutes with its conjugates, can be re-written as:
\[\forall i \leq n,\ \forall s,t \in RF_n,\ \ x_i^{sx_it} = x_i^{st}.\]
In other words, for $w \in RF_n$, $x_i^w$ depends only on the class of $w$ modulo $x_i$ (that is, modulo the normal closure of $x_i$). These relations allow us us to say more about the above decomposition of $\mathcal K_n$:

\begin{lem}\label{decomposition_lemma_2}
The kernel $\mathcal K_n'$ of the projection $q_n: \mathcal K_n \twoheadrightarrow RF_{n-1}$ is an abelian group, isomorphic to the product of the $\mathcal N(x_n)/x_i$, where $\mathcal N(x_n)/x_i$ is the normal closure of $x_n$ inside $RF_n/x_i \cong RF_{n-1}$. Precisely, the identification of $\mathcal N(x_n)/x_i$ with a factor of $\mathcal K_n'$ is induced by the map:
\[c_i:
\left\{\begin{array}{cll}
\mathcal N(x_n) &\longrightarrow &  \hspace*{1.9em} \mathcal K_n'\\
u               &\longmapsto     & \left(x_j \mapsto 
\begin{cases} x_i^u &\text{ if } j=i, \\ x_j &\text{ else.}
                                   \end{cases} \right)
                                   
\end{array}  \right.\]
which is a well-defined group morphism. Furthermore, $c_i$ is $RF_{n-1}$-equivariant, where $RF_{n-1} \cong \langle \chi_{nj} \rangle_j$ acts via automorphisms on the source, and via conjugation on the target.
\end{lem}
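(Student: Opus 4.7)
Plan: Build the isomorphism from the $c_i$ in stages, then verify equivariance.

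\textbf{Step 1: Each $c_i$ is a well-defined morphism into $\mathcal K_n'$.} For $u \in \mathcal N(x_n)$, the element $x_i^u$ lies in the abelian subgroup $\mathcal N(x_i) = C(x_i)$ (Lemma~\ref{centralizers_in_RF}), so it commutes with all its conjugates; hence the assignment $x_i \mapsto x_i^u$, $x_k \mapsto x_k$ ($k \neq i$) respects the defining relations of $RF_n$ and defines an endomorphism. Nilpotence of $RF_n$ together with triviality on the abelianisation upgrades it to an automorphism (Lemma~2.38 of \cite{Darne1}), which fixes $x_n$ and induces the identity on $RF_{n-1}$, hence lies in $\mathcal K_n'$. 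Dependence only on $\bar u \in \mathcal N(x_n)/x_i$ follows at once from $\mathcal N(x_i) = C(x_i)$. Finally $c_i(u)c_i(u')(x_i) = x_i^{u \cdot c_i(u)(u')}$ with $c_i(u)(u') \equiv u' \pmod{\mathcal N(x_i)}$ (since $c_i(u)$ alters only $x_i$, and by an element of $\mathcal N(x_i)$), giving $c_i(u)c_i(u') = c_i(uu')$.

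\textbf{Step 2: Product decomposition and abelianness.} I assemble the $c_i$ into a map $c \colon \prod_{i<n} \mathcal N(x_n)/x_i \to \mathcal K_n'$ sending $(\bar u_i)$ to the automorphism $x_i \mapsto x_i^{u_i}$, $x_n \mapsto x_n$, well-defined by Step~1. Injectivity is immediate from evaluation at each $x_i$. For surjectivity, any $\varphi \in \mathcal K_n'$ must fix $x_n$ (the corresponding $w_n$ lies in $\mathcal N(x_n) = C(x_n)$), and it sends each $x_i$ to some $x_i^{w_i}$ with $w_i \in \mathcal N(x_i)\mathcal N(x_n)$ (from $\varphi$ being the identity on $RF_{n-1}$ combined with $C(\bar x_i) = \mathcal N(\bar x_i)$ there); factoring $w_i = a_i b_i$ with $b_i \in \mathcal N(x_n)$ yields $x_i^{w_i} = x_i^{b_i}$, so $\varphi = c(\bar b_1,\dots,\bar b_{n-1})$. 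Showing that $c$ is a homomorphism from the direct product reduces to $[c_i(u), c_j(v)] = 1$ for $i \neq j$, hence to $c_j(v)(u) \equiv u \pmod{\mathcal N(x_i)}$ for $u \in \mathcal N(x_n)$. The key descent: $c_j(v)$ induces an automorphism $\overline{c_j(v)}$ of $RF_n/x_i$ whose further projection to $(RF_n/x_i)/\mathcal N(\bar x_n) \cong RF_{n-2}$ is the identity, since the only nontrivial modification $\bar x_j \mapsto \bar x_j^{\bar v}$ vanishes once $\bar x_n$ is killed ($\bar v$ maps to $1$). Thus $\overline{c_j(v)}(w) w^{-1} \in \mathcal N(\bar x_n) = C(\bar x_n)$ for every $w$, so $\overline{c_j(v)}$ fixes each generator $\bar x_n^w$ of $\mathcal N(x_n)/x_i$, and hence by homomorphy fixes the whole subgroup, including $\bar u$. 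Each factor $\mathcal N(x_n)/x_i$ is abelian as the normal closure of a generator in the reduced free group $RF_n/x_i \cong RF_{n-1}$, so $\mathcal K_n'$ is abelian and decomposes as announced.

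\textbf{Step 3: $RF_{n-1}$-equivariance.} The identity $c_i(\overline{\chi_{nj}(u)}) = \chi_{nj}\, c_i(u)\, \chi_{nj}^{-1}$ is checked on generators: both sides send $x_i$ to $x_i^{\chi_{nj}(u)}$ and fix $x_k$ for $k \neq i, n$; that the right-hand side also fixes $x_n$ follows from a short calculation, using $[x_i, \chi_{ni}(u)^{-1}] \in \mathcal N(x_n) = C(x_n)$ in the delicate case $j = i$. The main technical obstacle is the commutation argument in Step~2: it requires two successive quotients and invokes the centralizer formula $C(\bar x_n) = \mathcal N(\bar x_n)$ in $RF_n/x_i$, genuinely using the reduced free group relations rather than merely the basis-conjugating condition on $\varphi$.
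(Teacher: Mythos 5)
Your proof is correct, and its skeleton --- the maps $c_i$, the centralizer identity $C(x)=\mathcal N(x)$ of Lemma~\ref{centralizers_in_RF}, and the reduced relations $x_i^{sx_it}=x_i^{st}$ --- is the same as in the paper. You diverge on two technical points, both legitimately. For the product decomposition, the paper applies Lemma~\ref{generating_the_kernel} to $\mathcal K_n = \mathcal K_n' \rtimes RF_{n-1}$ to see that $\mathcal K_n'$ is generated by the conjugates $\chi_{in}^w$, which it then computes explicitly ($x_i \mapsto x_i^{x_n^w}$, all other generators fixed); you instead describe an arbitrary $\varphi\in\mathcal K_n'$ directly, factoring $w_i=a_ib_i\in\mathcal N(x_i)\mathcal N(x_n)$ and absorbing $a_i$, which makes surjectivity and injectivity of the assembled map $c$ transparent without invoking the generating-set lemma. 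For commutativity of the factors, the paper checks $[\chi_{in}^w,\chi_{jn}^v]=1$ on the explicit generators using $x_n^{\chi'(w)}=x_n^w$; your descent through $RF_n/x_i \to RF_n/\langle x_i,x_n\rangle$ proves the stronger statement that $c_j(v)$ fixes $\mathcal N(x_n)/x_i$ pointwise, yielding the commutation for arbitrary elements in one stroke. Both routes rest on exactly the same inputs (and your equivariance check matches the paper's up to the left/right conjugation convention); yours trades the explicit generator computation for a slightly more abstract but self-contained argument.
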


\begin{proof}
We identify elements $w \in RF_{n-1}$ with their image by $t_n: RF_{n-1} \rightarrow \mathcal K_n$, that is, we denote by $w$ the automorphism fixing all $x_i$ save $x_n$, which is sent to $x_n^w$. Applying Lemma \ref{generating_the_kernel} to the semi-direct product decomposition \eqref{dec2}, we see that $\mathcal K_n'$ is generated by the elements $\chi_{in}^w$, which we now compute. The automorphism $\chi_{in}^w$ fixes $x_\alpha$ if $\alpha \notin \{i,n\}$. On $x_i$ and $x_n$, using that $\chi_{in}(w) \equiv w \pmod{x_n}$, we compute:
\[\chi_{in}^w:
\left\{\begin{array}{clclclc}
x_i &\longmapsto &x_i        &\longmapsto &x_i^{x_n}                            &\longmapsto &x_i^{x_n^w}, \\
x_n &\longmapsto &{}^w\! x_n &\longmapsto &{}^{\chi_{in}(w)}\! x_n = {}^w\! x_n &\longmapsto &x_n.
\end{array}  \right.\]
From this calculation, we see that all $\chi = \chi_{in}^w$ commutes with every $\chi' = \chi_{jn}^v$, showing that $\mathcal K_n'$ is indeed abelian. If $j \neq i$, this is a consequence of the fact that these automorphisms act trivially modulo $x_n$:
\[\chi'(x_i^{x_n^w}) = x_i^{x_n^{\chi'(w)}} = x_i^{x_n^{w}}.\]
For $i=j$, it follows from the fact that the conjugates of $x_n$ commute.

Consider now $N_i$ the subgroup generated by the $\chi_{in}^w$, for $w \in RF_{n-1}$. All the elements of $N_i$ are automorphisms fixing all $x_j$ save $x_i$, and sending $x_i$ to an element $x_i^u$, for some $u \in \mathcal N(x_n)$. As a consequence, the map $c_i$ is a surjection from $\mathcal N(x_n)$ onto $N_i$. Since, by definition of the reduced free group, $x_i^{sx_it} = x_i^{st}$ for all $s,t \in RF_n$, we see that $c_i(v)$ depends only on the class $\overline v$ of $v$ in $RF_{n-1}/x_i$. We use this to show that $c_i$ is a morphism:
\[c_i(u)c_i(v): x_i \mapsto c_i(u)(x_i^{\overline v}) = (x_i^{\overline u})^{c_i(u)(\overline v)} = x_i^{\overline{uv}} = c_i(uv)(x_i). \]
Now, the kernel of $c_i$ is $C(x_i) \cap \mathcal N(x_n) = \mathcal N(x_i) \cap \mathcal N(x_n)$ (using Lemma \ref{centralizers_in_RF}). It thus induces an isomorphism between $\mathcal N(x_n)/(\mathcal N(x_i) \cap \mathcal N(x_n))$ and $N_i$. Moreover, since it is the image of $\mathcal N(x_n)$ in $RF_n/\mathcal N(x_i)$, this group identifies with the normal closure of $x_n$ inside $RF_n/x_i \cong RF_{n-1}$. 

We are left to show that $c_i$ is $RF_{n-1}$-equivariant. It is enough to show that it commutes with the actions of the generators. 
If $\varphi \in \langle \chi_{nj} \rangle_{j \neq i}$, then $x_i$ does not appear in $\varphi(x_n)$, so that:
\[c_i(u)^{\varphi}:
\left\{\begin{array}{clclclc}
x_i &\longmapsto &x_i &\longmapsto &x_i^u  &\longmapsto &x_i^{\varphi(u)}, \\
x_n &\longmapsto &\varphi(x_n) &\longmapsto &\varphi(x_n) &\longmapsto &x_n,
\end{array}  \right.\]
showing that $c_i(u)^{\varphi} = c_i(\varphi(u))$. It remains to check that $c_i(u)^{\chi_{ni}} = c_i(\chi_{ni}(u))$. $c_i(\chi_{ni}(u))$ identifies with $c_i(u)$, since $\chi_{ni}$ acts trivially modulo $x_i$. We thus need to check that $\chi_{ni}$ commute with all $c_i(u)$ (which are all elements in $N_i$). This comes from the two relations $x_n^{x_i} = x_n^{x_i^u}$ (because $u \in \mathcal N(x_n)$) and $x_i^{\chi_{ni}(u)} = x_i^u$ (because $\chi_{ni}$ acts trivially modulo $x_i$).
\end{proof}

\subsection{The Lie algebra of the reduced McCool group}

The decomposition of $hP\Sigma_n$ described in Theorem \ref{dec_of_hMcCool} induces a decomposition of its Lie algebra:

\begin{theo}\label{L(wPn)}
The Lie algebra $\Lie(hP\Sigma_n)$ decomposes into a semi-direct product:
\[\Lie(hP\Sigma_n) \cong \left[\left( \prod\limits_{i <n} \langle y_i \rangle\right) \rtimes R \mathfrak L_{n-1}\right] \rtimes \Lie(hP\Sigma_{n-1}),\]
where $ \langle y_i \rangle$ is the ideal generated by $y_i$ inside $R \mathfrak L_{n-1}$, and the action of $R \mathfrak L_{n-1}$ on the product is the diagonal one.
\end{theo}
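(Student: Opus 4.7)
The strategy is to apply the Lie functor to the iterated semi-direct product decomposition of Theorem~\ref{dec_of_hMcCool}, peeling off one layer at a time using the tools from the introduction.

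\textbf{Outer layer.} Since the outermost semi-direct product $hP\Sigma_n \cong \mathcal K_n \rtimes hP\Sigma_{n-1}$ is almost direct, the general principle recalled in the introduction gives directly
\[
\Lie(hP\Sigma_n) \cong \Lie(\mathcal K_n) \rtimes \Lie(hP\Sigma_{n-1}).
\]

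\textbf{Middle layer.} The semi-direct product $\mathcal K_n \cong \mathcal K_n' \rtimes RF_{n-1}$ is \emph{not} almost direct, so I fall back on the more general framework of semi-direct products of strongly central filtrations: with $\Gamma_*(RF_{n-1})$ on the quotient and the relative filtration $\Gamma_*^{RF_{n-1}}(\mathcal K_n')$ on the kernel, one has $\Gamma_*(\mathcal K_n) = \Gamma_*^{RF_{n-1}}(\mathcal K_n') \rtimes \Gamma_*(RF_{n-1})$ as a semi-direct product of strongly central filtrations, so
\[
\Lie(\mathcal K_n) \cong \Lie\bigl(\Gamma_*^{RF_{n-1}}(\mathcal K_n')\bigr) \rtimes R\mathfrak L_{n-1},
\]
using Theorem~\ref{L(RF) = LR} for $\Lie(RF_{n-1})$. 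Since $\mathcal K_n'$ is abelian and the direct product decomposition $\mathcal K_n' \cong \prod_{i<n}\mathcal N(x_n)/x_i$ from Lemma~\ref{decomposition_lemma_2} is $RF_{n-1}$-equivariant, the filtration $\Gamma_*^{RF_{n-1}}$ (built iteratively from $[RF_{n-1},-]$ on the abelian group $\mathcal K_n'$) splits componentwise, giving
\[
\Lie\bigl(\Gamma_*^{RF_{n-1}}(\mathcal K_n')\bigr) \cong \bigoplus_{i<n}\Lie\bigl(\Gamma_*^{RF_{n-1}}(\mathcal N(x_n)/x_i)\bigr).
\]

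\textbf{Identification of the factors.} The remaining and most delicate step is to identify each summand with $\langle y_i \rangle \subset R\mathfrak L_{n-1}$. The plan is to observe that under the isomorphism $c_i$ of Lemma~\ref{decomposition_lemma_2}, the $RF_{n-1}$-action on the factor $\mathcal N(x_n)/x_i$ corresponds to the action of $RF_{n-1}$ on the normal subgroup $\mathcal N(x_n)$ of $RF_n/x_i \cong RF_{n-1}$, which can be compared with conjugation inside this ambient reduced free group. Consequently, the relative filtration $\Gamma_*^{RF_{n-1}}$ coincides with the restriction of the lower central series $\Gamma_*(RF_n/x_i)$ to $\mathcal N(x_n)/x_i$. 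Passing to the associated graded via Theorem~\ref{L(RF) = LR}, we obtain the Lie ideal generated by the class of $x_n$ in $\Lie(RF_n/x_i) \cong R\mathfrak L_{n-1}$, which after the appropriate relabeling of generators is exactly $\langle y_i \rangle$. The diagonal action of $R\mathfrak L_{n-1}$ on the product then follows immediately from the $RF_{n-1}$-equivariance of the whole construction, concluding the proof.
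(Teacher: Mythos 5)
Your overall architecture is exactly the paper's: strip off the almost-direct outer factor to get $\Lie(hP\Sigma_n)\cong\Lie(\mathcal K_n)\rtimes\Lie(hP\Sigma_{n-1})$, handle $\mathcal K_n\cong\mathcal K_n'\rtimes RF_{n-1}$ via the relative filtration $\Gamma_*^{RF_{n-1}}$ in the framework of semi-direct products of strongly central filtrations, and split $\Gamma_*^{RF_{n-1}}(\mathcal K_n')$ componentwise using the diagonal action. Those two layers are correct and match the paper.

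The identification of the factors is where the substance of the theorem lies, and your justification there is too quick on the two points that actually need proof. First, you treat the $RF_{n-1}$-action on $\mathcal N(x_n)/x_i$ as if it were (comparable to) conjugation in the ambient group $RF_n/x_i$. But the acting copy of $RF_{n-1}$ is $\langle\chi_{nj}\rangle_j\cong RF_n/x_n$, which is \emph{not} the ambient group $RF_n/x_i$ (one contains $x_i$ and not $x_n$, the other the reverse), and the action is genuinely \emph{not} the conjugation action: $t_n(v)$ moves the conjugator of $x_n^w$ by left multiplication, whereas internal conjugation moves it by right multiplication — the paper warns about precisely this in the remark following Lemma~\ref{basis_of_Kn'}. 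So ``can be compared with conjugation'' is the assertion that needs an argument, not a consequence. The paper's way out is the swap: the automorphism of $RF_n$ exchanging $x_i$ and $x_n$ carries the factor to $\mathcal N(x_i)/x_n$ sitting inside the acting group $RF_{n-1}$ itself, where the relevant action \emph{is} conjugation (equivariance holding because $x_i$ acts trivially on both sides). Second, even once you are reduced to conjugation on a normal subgroup, the equality $\Gamma_*^{RF_{n-1}}(\mathcal N(x_i))=\mathcal N(x_i)\cap\Gamma_*(RF_{n-1})$ is not automatic; the paper deduces it from the fact that $\mathcal N(x_i)\hookrightarrow RF_{n-1}\twoheadrightarrow RF_{n-2}$ is \emph{split}, via \cite[Prop.~3.4]{Darne2}. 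With those two points supplied, your final step (the kernel of $R\mathfrak L_{n-1}\twoheadrightarrow R\mathfrak L_{n-2}$ is $\langle y_i\rangle$, by Theorem~\ref{L(RF) = LR}) is exactly the paper's conclusion, and your ``relabeling'' is just the swap performed at the Lie-algebra level instead of the group level.
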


\begin{proof}
From the almost-direct product decomposition $hP\Sigma_n \cong \mathcal K_n \rtimes hP\Sigma_{n-1}$, comes a decomposition of the Lie algebra $\Lie(hP\Sigma_n) \cong \Lie(\mathcal K_n) \rtimes \Lie(hP\Sigma_{n-1})$. In the decomposition of $\mathcal K_n$ described above \eqref{dec2}, we can replace the normal closure $\mathcal N(x_n)/x_i$ of $x_n$ in $RF_n/x_i$ by the normal closure $\mathcal N(x_i)/x_n$ of $x_i$ in $RF_n/x_n \cong RF_{n-1}$. Indeed, the automorphism of $RF_n$ exchanging $x_i$ and $x_n$ induces an isomorphism between these two, which is $RF_{n-1}$-equivariant, since $x_i$ acts trivially on both of them. We thus have to compute:
\[\Lie(\mathcal K_n) \cong \Lie\left[\left( \prod\limits_{i <n} \mathcal N(x_i)\right) \rtimes RF_{n-1}\right].\]
Since this is not a decomposition into an almost direct product, we have to use \cite[\textsection 3.1]{Darne2}: we need to compute $\Gamma_*^{RF_{n-1}}\left( \prod\mathcal N(x_i)\right)$, which is the product $\prod \Gamma_*^{RF_{n-1}}(\mathcal N(x_i))$, since $RF_{n-1}$ acts diagonally. In order to do this, consider the split short exact sequence of groups: 
\[\mathcal N(x_i) \hookrightarrow RF_{n-1} \twoheadrightarrow RF_{n-1}/x_i \cong RF_{n-2}.\]
From \cite[Prop.~3.4]{Darne2}, this gives rise to a decomposition of $\Gamma_*(RF_{n-1})$ into a semi-direct product $\Gamma_*^{RF_{n-2}}(\mathcal N(x_i)) \rtimes \Gamma_*(RF_{n-2})$, where $\Gamma_*^{RF_{n-2}}(\mathcal N(x_i))$ is defined by taking commutators with $\mathcal N(x_i) \rtimes RF_{n-2} \cong RF_{n-1}$ at each step, so is equal to $\Gamma_*^{RF_{n-1}}(\mathcal N(x_i))$. As a consequence, $\mathcal N_*(x_i):= \Gamma_*^{RF_{n-1}}(\mathcal N(x_i))$ is the intersection of $\Gamma_*(RF_{n-1})$ with $\mathcal N(x_i)$. Its associated Lie algebra fits into the short exact sequence:
\[\Lie(\mathcal N_*(x_i)) \hookrightarrow \Lie(RF_{n-1}) \twoheadrightarrow \Lie(RF_{n-2}).\]
Theorem \ref{L(RF) = LR} ensures that the projection on the right identifies with the projection of $R\mathfrak L_{n-1}$ onto $R\mathfrak L_{n-2}$ sending $y_i$ to $0$, whose kernel is $\langle y_i \rangle$. Thus $\Lie(\mathcal N_*(x_i)) \cong \langle y_i \rangle$, and $\Lie(\mathcal N(x_i) \rtimes RF_{n-1}) \cong  \Lie(\mathcal N_*(x_i)) \rtimes \Lie(RF_{n-1}) \cong \langle y_i \rangle \rtimes R\mathfrak L_{n-1}$, ending the proof of the theorem.
\end{proof}

\subsection{The Andreadakis equality}\label{par_Andreadakis}

Theorem \ref{L(wPn)} gives a complete description of the graded Lie ring associated to $\Gamma_*(hP\Sigma_n)$. On the other hand, Theorem \ref{Johnson_conj_iso} describes the Lie ring associated with the Andreadakis filtration $hP\Sigma_n \cap \mathcal A_*(RF_n)$. Using these two results, we are now able to show:

\begin{theo}\label{Andreadakis_for_wP_n}
The Andreadakis equality holds for $hP\Sigma_n$.
\end{theo}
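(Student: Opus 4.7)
The plan is to show that the comparison morphism $\tau' := \tau \circ i_* : \Lie(hP\Sigma_n) \to \Der(R\mathfrak{L}_n)$ is injective; by the criterion recalled in \cref{intro_to_Andreadakis}, this is equivalent to the Andreadakis equality. By Theorem~\ref{Johnson_conj_iso}, the image of $\tau'$ lies in $\Der_{\tau}(R\mathfrak{L}_n)$, and $\tau'$ sends each generator $\overline{\chi_{ij}}$ to $d_{ij}$. Since $\Lie(hP\Sigma_n)$ is generated in degree $1$ by the $\overline{\chi_{ij}}$ (Corollary~\ref{generators_of_hMcCool}) and $\Der_{\tau}(R\mathfrak{L}_n)$ is generated in degree $1$ by the $d_{ij}$ (Proposition~\ref{tangential_der}), the map $\tau'$ is surjective.

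The hard part is then injectivity, which I would establish by comparing $\mathbb{Z}$-ranks in each degree, by induction on $n$. On the target side, the decomposition $\Der_{\tau}(R\mathfrak{L}_n) \cong \bigoplus_{i=1}^n D_i$ with each $D_i \cong R\mathfrak{L}_{n-1}$ (from the proof of Proposition~\ref{tangential_der}) yields $\rk_k(\Der_{\tau}(R\mathfrak{L}_n)) = n \cdot \rk_k(R\mathfrak{L}_{n-1})$. On the source side, Theorem~\ref{L(wPn)} gives
\[\rk_k(\Lie(hP\Sigma_n)) = (n-1)\,\rk_k(\langle y \rangle) + \rk_k(R\mathfrak{L}_{n-1}) + \rk_k(\Lie(hP\Sigma_{n-1})),\]
with $\rk_k(\langle y\rangle) = \rk_k(R\mathfrak{L}_{n-1}) - \rk_k(R\mathfrak{L}_{n-2})$ from the split projection $R\mathfrak{L}_{n-1} \twoheadrightarrow R\mathfrak{L}_{n-2}$ killing $y$. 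Plugging in the induction hypothesis $\rk_k(\Lie(hP\Sigma_{n-1})) = (n-1)\,\rk_k(R\mathfrak{L}_{n-2})$, the two contributions in $\rk_k(R\mathfrak{L}_{n-2})$ cancel and one recovers $n \cdot \rk_k(R\mathfrak{L}_{n-1})$, matching the target.

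Finally, each $\Lie_k(hP\Sigma_n)$ is torsion-free, as Theorem~\ref{L(wPn)} presents it as an iterated extension of free $\mathbb{Z}$-modules (sub-ideals $\langle y\rangle \subset R\mathfrak{L}_m$ and copies of $R\mathfrak{L}_m$). A surjection between free $\mathbb{Z}$-modules of equal finite rank is an isomorphism, so $\tau'$ is injective and the result follows. The base case $n = 1$ is trivial. The main obstacle is really only the bookkeeping of the rank comparison; once the structural Theorems~\ref{Johnson_conj_iso} and~\ref{L(wPn)} are in hand, the rest is formal.
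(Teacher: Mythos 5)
Your proof is correct, and it shares its skeleton with the paper's (reduce the Andreadakis equality to injectivity of $\tau'$, get surjectivity onto $\Der_\tau(R\mathfrak L_n)$ from Proposition~\ref{tangential_der} and Theorem~\ref{Johnson_conj_iso}, and feed in the decomposition of Theorem~\ref{L(wPn)}), but the key injectivity step is handled by a genuinely different mechanism. The paper argues structurally: it fits $\tau'$ into a morphism of short exact sequences $\Lie(\mathcal K_n) \hookrightarrow \Lie(hP\Sigma_n) \twoheadrightarrow \Lie(hP\Sigma_{n-1})$, reduces by the snake lemma and induction to the restriction to $\Lie(\mathcal K_n)$, and then kills the kernel by hand: if $\varphi = ((w_i), w_n)$ has $\tau'(\overline\varphi)=0$, each $\overline w_i$ centralizes $y_i$, hence lies in $\langle y_i \rangle = \Lie\left(\Gamma_*(RF_n)\cap \mathcal N(x_i)\right)$ by Lemma~\ref{centralizers_in_RL}, so the $w_i$ can be corrected modulo $\mathcal N(x_i)$ to lie in $\Gamma_{j+1}$, forcing $\overline\varphi = 0$. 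You instead run a degree-by-degree rank count: both sides are free $\Z$-modules of finite rank (Proposition~\ref{rk_of_RL} and the split exact sequences behind Theorem~\ref{L(wPn)} guarantee this for the source), the ranks agree by your telescoping computation $(n-1)\bigl(\rk_k R\mathfrak L_{n-1} - \rk_k R\mathfrak L_{n-2}\bigr) + \rk_k R\mathfrak L_{n-1} + (n-1)\rk_k R\mathfrak L_{n-2} = n\,\rk_k R\mathfrak L_{n-1}$, and a surjection of free $\Z$-modules of equal finite rank is an isomorphism. Your route is shorter and essentially the same calculation that the paper performs only later, for the Hirsch rank corollary; what you give up is that it is a purely numerical verification — it depends on torsion-freeness and finiteness of ranks and tells you nothing about where a hypothetical kernel element would live — whereas the paper's element-level argument identifies the kernel directly and reuses the centralizer machinery (Lemma~\ref{centralizers_in_RL}) that powers the rest of the paper. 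Both are complete given the results you cite.
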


\begin{proof}
We want to show that the Johnson morphism $\tau': \Lie(hP\Sigma_n) \rightarrow \Der(R \mathfrak L_n)$ is injective (see the end of \cref{intro_to_Andreadakis}). We make use of the following the commutative diagrams:
\[\begin{tikzcd}
\Lie(\mathcal K_n) \ar[r, hook]      \ar[d, dashed, "\tau'"] 
&\Lie(hP\Sigma_n)  \ar[r, two heads] \ar[d, "\tau'"]
&\Lie(hP\Sigma_{n-1})                    \ar[d, "\tau'"] \\
\bullet                  \ar[r, hook]     
&\Der_\tau(R\mathfrak L_n)   \ar[r, two heads]
&\Der_\tau(R\mathfrak L_{n-1}),                    
\end{tikzcd}\]
Where the bottom projection is the one induced by $y_n \mapsto 0$.
By induction (beginning at $n = 2$), using the snake lemma, we only have to prove that the left map is injective, that is, that $\tau': \Lie(\mathcal K_n) \rightarrow \Der(R\mathfrak L_n)$ is.
Take an element 
\[\varphi = ((w_i), w_n)  \in \Gamma_j(\mathcal K_n) = \left(\prod\limits_{j < n} \left( \Gamma_j(RF_{n}) \cap \mathcal N(x_n)\right)/x_i\right) \rtimes \Gamma_j(RF_{n-1}),\]
meaning that $\varphi$ is the automorphism conjugating $x_n$ by $w_n \in \Gamma_j(RF_{n-1})$ and $x_i$ by $w_i \in \Gamma_j(RF_n) \cap \mathcal N(x_n)$ for $i < n$, which depends only on the class of each $w_i$ modulo $\mathcal N(x_i)$. Then $\tau'_j( \overline \varphi)$ sends each $y_i$ ($i \leq n$) to $[y_i, \overline w_i] \in \mathcal L_{j+1}(RF_n)$. As a consequence, the equality $\tau'_j( \overline \varphi) = 0$ would mean that each $\overline w_i$ commutes with $y_i$ in $\mathcal L(RF_n) \cong R \mathcal L_n$. However, by Lemma \ref{centralizers_in_RL}, this would imply that $\overline w_i \in \langle y_i \rangle$. However, in the course of the proof of Theorem \ref{L(wPn)}, we have shown that  $\langle y_i \rangle = \mathcal L\left(\Gamma_*(RF_n) \cap \mathcal N(x_i)\right)$. Thus there exists $v_i$ in $\Gamma_j(RF_n) \cap \mathcal N(x_i)$ such that $\overline v_i = \overline w_i$, that is, $w_i \equiv v_i \pmod{\Gamma_{j+1}(RF_n)}$. But we can replace $w_i$ by $w_i v_i ^{-1}$ without changing $\varphi$, so that all the $w_i$ can be chosen to be in $\Gamma_{j+1}(RF_n)$. This implies that $\varphi \in \Gamma_{j+1}(hP\Sigma_n)$, which means that $\overline \varphi = 0$ in $\Lie_j(hP\Sigma_n)$. This ends the proof that the kernel of $\tau'$ is trivial, and the proof of the Theorem.
\end{proof}

\subsection{Braids up to homotopy}\label{par_hPn}

Consider the (classical) pure braid group $P_n$. It can be embedded into the monoid of string-links on $n$ strands. These string-links can be considered \emph{up to (link-)homotopy}, which means that one adds to the isotopy relation the possibility for each strand to cross itself. This relation is obviously compatible with the monoid structure, and since every string-link is in fact homotopic to a braid, this quotient is a quotient of the pure braid group, called the group of braids up to homotopy, denoted by $hP_n$.

\subsubsection{Decomposition and Lie algebra}\label{dec_of_hPn}

In \cite{Goldsmith-Braids}, Goldsmith described $hP_n$ as a quotient of $P_n$ by a finite set of relations. These relations say exactly that for $j < k$, the generators $A_{jk}$ commute with their conjugates by elements of $\langle A_{ik}\rangle_{i < k} \cong F_{k-1}$. This means exactly that the free factors in the decomposition of $P_n$ are replaced by reduced free groups:
\[hP_{n+1} \cong RF_n \rtimes hP_n.\]
This decomposition first appeared explicitly in \cite{Habegger-Lin}, where a more topological proof is described.

Such a decomposition is compatible with the decomposition of the (classical) pure braid group, which means that the canonical projections give a morphism of semi-direct products:
\begin{equation}\label{proj_between_dec}
\begin{tikzcd}
F_n  \ar[r, hook] \ar[d, two heads] &P_{n+1}  \ar[r, two heads] \ar[d, two heads] &P_n   \ar[l, bend right] \ar[d, two heads]\\ 
RF_n \ar[r, hook]                   &hP_{n+1} \ar[r, two heads]                   &hP_n. \ar[l, bend right]
\end{tikzcd}
\end{equation}

Since Goldsmith's relations are commutation relations, the projection from $P_{n+1}$ onto $hP_{n+1}$ induces an isomorphism between $P_{n+1}^{ab}$ onto $hP_{n+1}^{ab}$. As a consequence, since the decomposition $P_{n+1} \cong F_n \rtimes P_n$ is an almost-direct product decomposition, the decomposition $hP_{n+1} \cong RF_n \rtimes hP_n$ also is. It thus induces a decomposition of the lower central series and of the corresponding Lie ring. Precisely, we get iterated semi-direct product decompositions:
\begin{equation}\label{dec_LCS_hPn}
\Gamma_j(hP_{n+1}) =  \Gamma_j(RF_n) \rtimes \Gamma_j(hP_n),
\end{equation}
which induces such decompositions on the associated graded Lie rings. Thus we get:
\begin{prop}\label{dec_L(hPn)}
The group $hP_{n+1}$ is $n$-nilpotent, and its Lie algebra decomposes as an iterated semi-direct product of reduced free Lie algebras:
\[\Lie(hP_{n+1}) \cong \Lie(RF_n) \rtimes \Lie(hP_n) \cong R\mathfrak L_n \rtimes \Lie(hP_n).\]
\end{prop}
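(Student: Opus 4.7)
The plan is to proceed by induction on $n$, using the almost-direct product decomposition $hP_{n+1} \cong RF_n \rtimes hP_n$ established just above, together with Theorem \ref{L(RF) = LR} and the formula for Lie algebras of almost-direct products recalled in the introduction.

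First I would address nilpotency. The base case $hP_2 \cong RF_1 \cong \mathbb{Z}$ is $1$-nilpotent. For the inductive step, the almost-directness of the decomposition \eqref{dec_LCS_hPn} gives the semi-direct product of lower central series
\[
\Gamma_j(hP_{n+1}) = \Gamma_j(RF_n) \rtimes \Gamma_j(hP_n).
\]
By the induction hypothesis $\Gamma_n(hP_n)=1$, and by Proposition \ref{RF_is_nilp} the group $RF_n$ is $n$-nilpotent, so $\Gamma_{n+1}(RF_n)=1$. Thus $\Gamma_{n+1}(hP_{n+1}) = 1 \rtimes 1 = 1$, proving that $hP_{n+1}$ is $n$-nilpotent.

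Next I would handle the Lie algebra decomposition. Because $hP_{n+1} \cong RF_n \rtimes hP_n$ is an almost-direct product, the general semi-direct product result recalled in \S1.1 (the almost-direct case, where $\Gamma_*^K(H) = \Gamma_*(H)$) gives
\[
\Lie(hP_{n+1}) \cong \Lie(RF_n) \rtimes \Lie(hP_n).
\]
Applying Theorem \ref{L(RF) = LR} identifies $\Lie(RF_n)$ with the reduced free Lie algebra $R\mathfrak L_n$, yielding the displayed isomorphism. Iterating this decomposition all the way down to $\Lie(hP_2) \cong R\mathfrak L_1$ produces the promised description as an iterated semi-direct product of reduced free Lie algebras.

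I do not expect any serious obstacle here: the entire argument is a direct assembly of previously established facts (the almost-direct decomposition of $hP_{n+1}$, Proposition \ref{RF_is_nilp}, Theorem \ref{L(RF) = LR}, and the standard behaviour of Lie algebras under almost-direct products). The only point requiring a small amount of care is ensuring that the decomposition is genuinely almost-direct, but this was already observed via the diagram \eqref{proj_between_dec} and the fact that $P_{n+1} \twoheadrightarrow hP_{n+1}$ induces an isomorphism on abelianizations.
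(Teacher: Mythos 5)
Your proposal is correct and follows essentially the same route as the paper, which derives the proposition directly from the almost-direct product decomposition $hP_{n+1} \cong RF_n \rtimes hP_n$, the induced decomposition \eqref{dec_LCS_hPn} of lower central series, and Theorem \ref{L(RF) = LR}. The inductive nilpotency argument you spell out (base case $hP_2 \cong RF_1 \cong \Z$, then $\Gamma_{n+1}(hP_{n+1}) = \Gamma_{n+1}(RF_n) \rtimes \Gamma_{n+1}(hP_n) = 1$ using Proposition \ref{RF_is_nilp}) is exactly the intended, and correct, completion of the paper's terse derivation.
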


From this, we can deduce the Hirsch rank of $hP_n$, recovering Milnor's formula, as quoted in \cite[Section 3]{Habegger-Lin}:
\begin{cor}
The group $hP_n$ has no torsion and its Hirsch rank is: 
\[\rk(hP_n) = \sum_{k = 1}^{n-1} (k-1)! \binom{n}{k+1}.\]
\end{cor}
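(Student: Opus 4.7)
The plan is to iterate Proposition \ref{dec_L(hPn)} to reduce everything to a direct sum of the reduced free Lie algebras $R\mathfrak L_j$, and then apply Proposition \ref{rk_of_RL} together with a hockey-stick identity to match the stated formula.

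First, I would iterate the decomposition $\Lie(hP_{k+1}) \cong R\mathfrak L_k \rtimes \Lie(hP_k)$, starting from the trivial base case $\Lie(hP_1) = 0$. This yields an iterated semi-direct product
\[ \Lie(hP_n) \;\cong\; R\mathfrak L_{n-1} \rtimes R\mathfrak L_{n-2} \rtimes \cdots \rtimes R\mathfrak L_1, \]
which, as a graded abelian group, is simply the direct sum $\bigoplus_{j=1}^{n-1} R\mathfrak L_j$. Proposition \ref{rk_of_RL} shows that each $R\mathfrak L_j$ is free abelian (with an explicit Lyndon basis), so $\Lie(hP_n)$ is a finitely generated free abelian group. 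In particular, the associated graded of $\Gamma_*(hP_n)$ has no torsion, and each quotient $\Gamma_j(hP_n)/\Gamma_{j+1}(hP_n)$ is free abelian. Since $hP_n$ is nilpotent (Prop.\ \ref{dec_L(hPn)}), a standard induction on the nilpotency class, lifting from the torsion-free quotients, shows that $hP_n$ itself is torsion-free.

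For a torsion-free finitely generated nilpotent group, the Hirsch rank equals the rank of the associated graded Lie ring, so
\[ \rk(hP_n) \;=\; \sum_{j=1}^{n-1} \rk(R\mathfrak L_j) \;=\; \sum_{j=1}^{n-1} \sum_{k=1}^{j} (k-1)!\,\binom{j}{k}, \]
using the formula from Proposition \ref{rk_of_RL}. Swapping the order of summation and applying the hockey-stick identity $\sum_{j=k}^{n-1} \binom{j}{k} = \binom{n}{k+1}$ then yields
\[ \rk(hP_n) \;=\; \sum_{k=1}^{n-1} (k-1)!\,\binom{n}{k+1}, \]
which is the claimed formula.

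There is no serious obstacle here: the heavy lifting has already been done by Proposition \ref{dec_L(hPn)} (which in turn relies on Goldsmith's relations giving an almost-direct product decomposition) and by the basis count of Proposition \ref{rk_of_RL}. The only small technical point to be careful with is the passage from ``the associated graded is torsion-free'' to ``the group is torsion-free''; this must be invoked for a nilpotent group and relies on the finiteness of the nilpotency class established in Proposition \ref{dec_L(hPn)}.
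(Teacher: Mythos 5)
Your proof is correct and follows essentially the same route as the paper: both rely on the decomposition of Proposition \ref{dec_L(hPn)} to reduce $\Lie(hP_n)$ to a direct sum of the $R\mathfrak L_j$, deduce torsion-freeness from the freeness of the Lyndon bases in Proposition \ref{rk_of_RL}, and obtain the rank formula from the same binomial identity (the paper's ``iterating Pascal's formula'' is exactly your hockey-stick identity, applied degree by degree rather than after swapping a double sum). The only cosmetic difference is that you spell out the lifting of torsion-freeness from the associated graded to the group, which the paper leaves implicit.
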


\begin{proof}
That it has no torsion (even no torsion in its lower central series) comes from the fact that the $R\mathfrak L[m]$ do not, according to Proposition \ref{rk_of_RL}. The same proposition gives us the ranks of the $R\mathfrak L[m]_k$, allowing us to compute:
\[\rk\left( \Lie_k(hP_n) \right) = \sum\limits_{m = 1}^{n-1} \rk\left(R\mathfrak L[m]_k\right) = (k-1)! \sum\limits_{m = 1}^{n-1}\binom{m}{k} = (k-1)! \binom{n}{k+1},\]
the last equality being obtained by iterating Pascal's formula, or by a combinatorial proof (replacing the choice of $k$ elements $t_1, ..., t_k$ among $m$ elements, with $m$ ranging from $k$ to $n-1$, by the choice of $k+1$ elements $t_1, ..., t_k, m+1$ among $n$ elements).
\end{proof}

Let us also mention that we can deduce from the decomposition of $\Lie(hP_n)$ described in Proposition \ref{dec_L(hPn)} and from the usual presentation of the pure braid group a presentation of this Lie ring, which is a quotient of the Drinfeld-Kohno Lie ring $\Lie(P_n)$ of infinitesimal braids (whose rational version was introduced in \cite{Kohno}).

\begin{cor}\label{hDrinfeld-Kohno}
The Lie ring of $hP_n$ is generated by $t_{ij}\ (1 \leq i , j \leq n)$, under the Drinfeld-Kohno relations:
\[\begin{cases}
t_{ij} = t_{ji},\ t_{ii} = 0 &\forall i,j,\\ 
[t_{ij}, t_{ik} + t_{kj}] = 0 &\forall i, j, k,\\
[t_{ij}, t_{kl}] = 0 &\text{if } \{i,j\} \cap \{k,l\} = \varnothing,
\end{cases}\]
to which are added, for each $m$, the vanishing of Lie monomials in the $t_{im}$ ($i<m$) with repetition.
\end{cor}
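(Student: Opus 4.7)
The plan is to derive this presentation by comparing the iterated semi-direct product decomposition $\Lie(hP_n) \cong R\mathfrak L_{n-1} \rtimes \cdots \rtimes R\mathfrak L_1$ from Proposition \ref{dec_L(hPn)} with the analogous decomposition $\Lie(P_n) \cong \mathfrak L_{n-1} \rtimes \cdots \rtimes \mathfrak L_1$ of the Drinfeld--Kohno Lie ring of infinitesimal braids. The key observation is that the surjection $P_n \twoheadrightarrow hP_n$ is compatible with these almost-direct product decompositions (cf.\ diagram~\eqref{proj_between_dec}), so it induces a surjection $\Lie(P_n) \twoheadrightarrow \Lie(hP_n)$ whose kernel can be computed factor by factor.

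First, I would let $\mathfrak h_n$ denote the Lie ring defined by the presentation in the statement. The Drinfeld--Kohno relations are satisfied in $\Lie(hP_n)$ because they already hold in $\Lie(P_n)$ and the projection is a Lie morphism. Moreover, for each $m$, the generators $t_{im}$ ($i < m$) project in $\Lie(hP_n)$ to the generators of the reduced free Lie subalgebra $R\mathfrak L_{m-1}$ sitting as a factor in the decomposition of Proposition~\ref{dec_L(hPn)}, and Theorem~\ref{L(RF) = LR} tells us that in $R\mathfrak L_{m-1}$ the Lie monomials with repetition vanish. Hence all the stated relations hold in $\Lie(hP_n)$, giving a surjection $\pi: \mathfrak h_n \twoheadrightarrow \Lie(hP_n)$.

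For injectivity, I would proceed by induction on $n$. The Drinfeld--Kohno relations involving only the indices $\{i,j\} \subseteq \{1,\dots,n-1\}$, together with the repetition relations for $m \leq n-1$, yield by induction a surjection $\mathfrak h_{n-1} \twoheadrightarrow \mathfrak h_n / (t_{in})_{i<n}$; conversely, inside $\mathfrak h_n$, the sub-Lie-ring $\mathfrak k$ generated by the $t_{in}$ ($i<n$) is a normal ideal (its normality follows from the DK relations $[t_{ij}, t_{in}+t_{jn}] = 0$ and $[t_{ij}, t_{kn}] = 0$ for disjoint $\{i,j\}\cap\{k,n\}$), and the repetition relations for $m=n$ say precisely that $\mathfrak k$ is a quotient of the reduced free Lie algebra on the $t_{in}$. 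Thus we obtain a surjective morphism $R\mathfrak L_{n-1} \rtimes \mathfrak h_{n-1} \twoheadrightarrow \mathfrak h_n$, whose composition with $\pi$ is the isomorphism from Proposition~\ref{dec_L(hPn)}. It follows that $\pi$ itself is an isomorphism.

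The main obstacle I expect is checking that $\mathfrak k$ is indeed a semi-direct factor in $\mathfrak h_n$, i.e.\ that the DK relations in $\mathfrak h_n$ really force the abstract surjection $R\mathfrak L_{n-1} \rtimes \mathfrak h_{n-1} \twoheadrightarrow \mathfrak h_n$ without any further hidden identifications between the two factors. This is the place where one genuinely uses the Drinfeld--Kohno relations of mixed type $[t_{ij}, t_{in}+t_{jn}] = 0$, which translate the conjugation action of the lower-index generators on the top-layer $R\mathfrak L_{n-1}$; everything else is essentially a bookkeeping exercise analogous to the classical derivation of the Drinfeld--Kohno presentation of $\Lie(P_n)$ from its iterated free-Lie-algebra decomposition.
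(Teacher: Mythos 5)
Your proposal is correct and is essentially the paper's own proof: the paper simply states that the classical derivation of the Drinfeld--Kohno presentation of $\Lie(P_n)$ from its iterated free-Lie-algebra decomposition adapts verbatim upon replacing free Lie rings by reduced free Lie rings, and your argument is precisely that adaptation, spelled out (define the Lie ring by the presentation, check the relations hold using Proposition \ref{dec_L(hPn)} and Theorem \ref{L(RF) = LR}, then prove injectivity inductively by matching the semi-direct product decompositions).
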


\begin{proof}
The proof in the classical case (see for instance to the appendix of \cite{Darne2}) adapts verbatim, by considering reduced free Lie rings instead of free Lie rings.
\end{proof}

Remark that as in the definition of the reduced free Lie ring (Definition \ref{def_of_RL} -- see also Remark \ref{finite_pstation_RL}), one can give a simpler finite presentation by considering, for each $m$, only linear Lie monomials in the $t_{im}$ ($i<m$) of length at most $m$.

\subsubsection{The Andreadakis problem}\label{par_Andreadakis_hPn}

The semi-direct product $RF_n \rtimes hP_n$ described above is the same thing as an action of $hP_n$ on $RF_n$, also described by a morphism from $hP_n$ to $\Aut(RF_n)$. This is the \emph{homotopy Artin action}, that we now study, using the fact that it is encoded by conjugation inside $P_{n+1} = RF_n \rtimes hP_n$.

First, remark that this action is by basis-conjugating automorphisms. In fact, the compatibility diagram \eqref{proj_between_dec} gives rise to a commutative diagram:
\[\begin{tikzcd}
P_n  \ar[r, hook] \ar[d, two heads] &\Aut_C(F_n) \ar[d, two heads]\\ 
hP_n \ar[r]                         &\Aut_C(RF_n),
\end{tikzcd}\]
the morphism on the left being surjective by \ref{generators_of_hMcCool}. The top map, which is the Artin action, is injective (the action is faithful) and its image is exactly the subgroup of basis-conjugating automorphisms fixing the \emph{boundary element} $x_1 \cdots x_n$ \cite[Th.\ 1.9]{Birman}. Habegger and Lin have shown that the analogous statements are true for $hP_n$ \cite[Th.\ 1.7]{Habegger-Lin}: the homotopy Artin action induces an isomorphism between $hP_n$ and the group $\Aut_C^\partial(RF_n)$ of basis-conjugating automorphisms of $RF_n$ preserving the product $x_1 \cdots x_n$. In their proof, they show that this group admits the same decomposition as $hP_n$, and the pieces of these decompositions identify under the Artin morphism. We will recover the faithfulness of the homotopy Artin action as part of our answer to the Andreadakis problem for $hP_n \subset \Aut_C(RF_n)$ (see Cor.\ \ref{Artin_faithful} below).

\begin{theo}\label{Andreadakis_for_hP_n}
The Andreadakis equality holds for $hP_n$, embedded into $\Aut(RF_n)$ \emph{via} the Artin action.
\end{theo}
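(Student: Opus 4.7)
The plan is to induct on $n$, following closely the template provided by the proof of Theorem~\ref{Andreadakis_for_wP_n}. The base case $n=2$ is immediate: $hP_2 \cong \Z$ is abelian and $RF_2$ is $2$-nilpotent, so both filtrations $\Gamma_*(hP_2)$ and $hP_2 \cap \mathcal A_*(RF_2)$ collapse after degree $1$. For the inductive step, as explained at the end of \cref{intro_to_Andreadakis}, it suffices to prove that the Johnson morphism $\tau' : \Lie(hP_{n+1}) \to \Der(R \mathfrak L_{n+1})$ is injective. I would exploit the almost-direct product decomposition $hP_{n+1} \cong RF_n \rtimes hP_n$ of \cref{dec_of_hPn}, which by \cref{dec_L(hPn)} gives a Lie algebra decomposition $\Lie(hP_{n+1}) \cong R\mathfrak L_n \rtimes \Lie(hP_n)$, and which fits into the commutative diagram
\[\begin{tikzcd}
\Lie(RF_n) \ar[r, hook] \ar[d, dashed, "\tau'"]
&\Lie(hP_{n+1})  \ar[r, two heads] \ar[d, "\tau'"]
&\Lie(hP_n) \ar[d, "\tau'"] \\
\bullet                  \ar[r, hook]
&\Der_\tau(R\mathfrak L_{n+1})   \ar[r, two heads]
&\Der_\tau(R\mathfrak L_n),
\end{tikzcd}\]
where the bottom-right map sends a tangential derivation $y_i \mapsto [y_i, w_i]$ to $y_i \mapsto [y_i, w_i|_{y_{n+1}=0}]$. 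The right vertical map is injective by the induction hypothesis, so a standard diagram chase reduces the injectivity of $\tau'$ on $\Lie(hP_{n+1})$ to the injectivity of the left (dashed) arrow.

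The core of the argument is the computation of this left arrow. The subgroup $RF_n \subset hP_{n+1}$ is the kernel of the forgetful map removing the last strand, so by Artin's combing it consists of braids moving only the $(n+1)$-st strand: concretely, an element $w \in RF_n$ acts on $RF_{n+1}$ by fixing each $x_i$ for $i \leq n$ and sending $x_{n+1} \mapsto x_{n+1}^w$, where $w$ is interpreted in $RF_{n+1}$ via the inclusion on generators. Passing to the Lie level, the Johnson morphism then sends $\bar w \in R \mathfrak L_n \cong \Lie(RF_n)$ to the derivation $y_{n+1} \mapsto [y_{n+1}, \bar w]$, $y_i \mapsto 0$ for $i \leq n$. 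If this derivation vanishes, \cref{centralizers_in_RL} forces $\bar w$ into the Lie ideal $\langle y_{n+1}\rangle$ of $R\mathfrak L_{n+1}$; but $\bar w$ already lies in $R \mathfrak L_n$ and so involves no $y_{n+1}$, whence $\bar w = 0$. This gives the required injectivity and completes the induction.

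The main obstacle is the identification of this Artin action of $RF_n \subset hP_{n+1}$. Geometrically this is the content of Artin's combing applied to the ``innermost cable'' associated to the $(n+1)$-st strand, reduced modulo homotopy; rigorously, one needs to check that the splitting of \cref{dec_of_hPn} (extracted from Goldsmith's decomposition as used in \cite{Goldsmith-Braids, Habegger-Lin}) identifies $RF_n$ with precisely this subgroup of $\Aut(RF_{n+1})$. Once this compatibility is established, the remainder of the proof is purely algebraic and mirrors the strategy of Theorem~\ref{Andreadakis_for_wP_n}.
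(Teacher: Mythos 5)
Your overall strategy (induct on $n$, use the decomposition $\Lie(hP_{n+1})\cong R\mathfrak L_n\rtimes\Lie(hP_n)$ to reduce the injectivity of $\tau'$ to its restriction to the combing factor, then invoke Lemma~\ref{centralizers_in_RL}) is viable, but the step you yourself flag as ``the main obstacle'' is not merely unverified --- as stated it is false. An automorphism of $RF_{n+1}$ fixing $x_1,\dots,x_n$ and sending $x_{n+1}\mapsto x_{n+1}^w$ sends the boundary element $x_1\cdots x_{n+1}$ to $x_1\cdots x_n\,x_{n+1}^w$, so it preserves it only when $w\in C(x_{n+1})=\mathcal N(x_{n+1})$; since the image of $hP_{n+1}$ under the homotopy Artin action is exactly $\Aut_C^\partial(RF_{n+1})$, the subgroup $RF_n=\langle A_{1,n+1},\dots,A_{n,n+1}\rangle\subset hP_{n+1}$ cannot act in the way you describe. (What you wrote down is the section $t_{n+1}\colon RF_n\to hP\Sigma_{n+1}$ from the \emph{welded} decomposition; in the braid group the generator $A_{i,n+1}$ also conjugates $x_i$ and the intermediate generators, precisely so as to preserve the boundary.) The argument is repairable, because your last step only uses the $y_{n+1}$-component of the derivation: one has $\tau'(\bar w)(y_{n+1})=[y_{n+1},\bar v_w]$, where $v_w$ is the conjugator of $x_{n+1}$, well defined in $RF_{n+1}/\mathcal N(x_{n+1})$; but one must then show that $\bar w\mapsto\bar v_w$ induces an \emph{injective} map $\Lie(RF_n)\to R\mathfrak L_{n+1}/\langle y_{n+1}\rangle\cong R\mathfrak L_n$, and that identification is the actual content missing from your proposal.

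For comparison, the paper sidesteps any computation of the Artin action: it works directly inside $hP_{n+1}=RF_n\rtimes hP_n$, rewrites the hypothesis as $[w,RF_n]\subseteq\Gamma_{j+1}(RF_n)$, combs $w=\beta_n\cdots\beta_2$, and for the largest non-vanishing $\beta_i$ observes that $\beta_i$ and $A_{i,n+1}$ lie in a \emph{common} copy $\langle A_{1,i},\dots,A_{i-1,i},A_{i,i+1},\dots,A_{i,n+1}\rangle\cong RF_n$ of the reduced free group inside $hP_{n+1}$, so that Lemma~\ref{centralizers_in_RL} applies there and yields a contradiction. That route needs no induction on $n$ and no identification of conjugators, only the decomposition \eqref{dec_LCS_hPn} of the lower central series.
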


\begin{proof}
We adapt the proof for $P_n$ given in \cite{Darne2}. Let $w \in hP_n$, and suppose that $w$ acts on $RF_n$ as an element of $\mathcal A_j$. We want to show that it belongs to $\Gamma_j(hP_n)$. Our hypothesis can be written as:
\[[w, RF_n] \subseteq \Gamma_{j+1}(RF_n),\]
where the bracket is computed in $RF_n \rtimes hP_n$, which is exactly $hP_{n+1}$. Moreover, from the decomposition of the lower central series of $hP_{n+1}$ described above (\cref{dec_of_hPn}), we deduce that $\Gamma_j(hP_n) = hP_n \cap \Gamma_j(hP_{n+1})$, so that the conclusion we seek is in fact $w \in \Gamma_j(hP_{n+1})$. Let us comb $w$: we write $w = \beta_n \cdots \beta_2 \in RF_{n-1} \rtimes (RF_{n-2}  \rtimes( \cdots \rtimes RF_1) = hP_n$. Again, because of the decomposition of the lower central series of $hP_n$, we need to show that each $\beta_i$ is in $\Gamma_j(P_{n+1})$. In the rest of the proof, we often write $\Gamma_k$ for $\Gamma_k(hP_{n+1})$, its intersection with the subgroups we consider being their own lower central series, because of \eqref{dec_LCS_hPn}.

Let us suppose that $w \notin \Gamma_j(hP_{n+1})$. Then $w \in \Gamma_k - \Gamma_{k+1}$ for some $k < j$. Let $i$ be maximal such that $\beta_i \notin \Gamma_{k+1}$. 
On the one hand, the generator $A_{i, n+1} \in RF_n$ commutes with all $\beta_k$ with $k < i$, so that $[w, A_{i, n+1}] \equiv [\beta_i, A_{i, n+1}] \pmod{\Gamma_{k+2}}$. Moreover, by hypothesis, $[w, A_{i, n+1}] \in \Gamma_{j+1} \subseteq \Gamma_{k+2}$, so that $[\beta_i, A_{i, n+1}] \in \Gamma_{k+2}$. Since $\beta_i$ has degree $k$ and $A_{i, n+1}$ has degree $1$ in the lower central series, this means that $[\overline{\beta_i}, \overline A_{i, n+1}] = 0$ in the Lie algebra. 
On the other hand, $\beta_i$ and $A_{i, n+1}$ belong to another copy of $RF_n$ inside $hP_{n+1}$, namely $\langle A_{1, i}, ..., A_{i-1,i}, A_{i, i+1}, ... A_{i, n+1} \rangle$. We denote this copy by $\widetilde{RF_n}$. Remark that the equality $\Gamma_*(\widetilde{RF_n}) = \widetilde{RF_n} \cap \Gamma_*(hP_{n+1})$ is also true for this copy of $RF_n$, as one sees by switching the strands $i$ and $n+1$ in the reasoning above. But then we can apply Lemma \ref{centralizers_in_RL}: since $\overline{\beta_i}$ commutes with the generator $\overline A_{i, n+1}$ of $\Lie(\widetilde{RF_n}) \cong R\mathfrak L_n$, it must belong to the Lie ideal of $\Lie(\widetilde{RF_n})$ generated by $\overline A_{i, n+1}$. But this is impossible: by definition of $\beta_i$, the generator $\overline A_{i, n+1}$ cannot appear in $\overline{\beta_i}$. We thus get a contradiction, and our conclusion.
\end{proof}
 
From this, we can recover the injectivity part of the result of Habegger and Lin:
 
\begin{cor}\label{Artin_faithful} \textup{\cite[Th.\ 1.7]{Habegger-Lin}.}
The homotopy Artin action is faithful. 
\end{cor}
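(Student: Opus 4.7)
The plan is to deduce faithfulness as an immediate consequence of the Andreadakis equality just proved, combined with nilpotence. Suppose $w \in hP_n$ lies in the kernel of the homotopy Artin action, so that $w$ acts as the identity on $RF_n$. Then for every $g \in RF_n$ one has $[w,g] = w(g)g^{-1} = 1$, so the image of $w$ belongs to $\mathcal A_j(RF_n)$ for every $j \geq 1$: the identity automorphism trivially satisfies the defining commutator conditions of each term of the Andreadakis filtration.

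By Theorem~\ref{Andreadakis_for_hP_n}, this forces $w \in \Gamma_j(hP_n)$ for every $j \geq 1$. On the other hand, Proposition~\ref{dec_L(hPn)} asserts that $hP_{n+1}$, being an iterated semi-direct product of nilpotent reduced free groups, is $n$-nilpotent; its subgroup $hP_n$ (obtained as a retract of the decomposition $hP_{n+1} \cong RF_n \rtimes hP_n$) is therefore also nilpotent, so $\bigcap_{j \geq 1} \Gamma_j(hP_n) = \{1\}$. We conclude that $w = 1$, which is the desired injectivity of the homotopy Artin action.

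No real obstacle is expected here, since both prerequisites are in hand and the deduction is entirely formal. The only minor subtlety to double-check is that the statement of Theorem~\ref{Andreadakis_for_hP_n}, phrased for $hP_n$ ``embedded into'' $\Aut(RF_n)$ via the Artin action, does not secretly presuppose the conclusion we are after: inspecting its proof confirms that what it really establishes is the implication ``if $w \in hP_n$ has its Artin image in $\mathcal A_j(RF_n)$, then $w \in \Gamma_j(hP_n)$'', which makes sense for arbitrary $w$ without assuming faithfulness and is exactly what we apply here.
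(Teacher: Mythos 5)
Your proof is correct and follows essentially the same route as the paper: the identity automorphism lies in every $\mathcal A_j(RF_n)$, Theorem~\ref{Andreadakis_for_hP_n} pushes $w$ into every $\Gamma_j(hP_n)$, and nilpotence of $hP_n$ finishes the job (the paper phrases this slightly more directly as $\mathcal A_n(RF_n)=\{1\}$, hence $w\in\Gamma_n(hP_n)=\{1\}$). Your remark that the Andreadakis theorem's proof does not presuppose faithfulness is a worthwhile sanity check and is indeed consistent with how that theorem is established.
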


\begin{proof}
If $w \in hP_n$ acts trivially on $RF_n$, then $w \in \{1\} = \mathcal A_n(RF_n)$, so that $w \in \Gamma_n(hP_n) = 1$.
\end{proof}

This injectivity of $hP_n \rightarrow hP\Sigma_n$ is weaker than our statement, which says that the lower central series are compatible, since they both are the trace of the Andreadakis filtration $\mathcal A_*(RF_n)$:

\begin{cor}
For all $n$, $hP_n \cap \Gamma_*(hP\Sigma_n) = \Gamma_*(hP_n)$.
\end{cor}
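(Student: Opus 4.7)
The plan is to deduce this as a direct consequence of the two Andreadakis equalities already established (Theorem \ref{Andreadakis_for_wP_n} for $hP\Sigma_n$ and Theorem \ref{Andreadakis_for_hP_n} for $hP_n$). The statement is essentially the observation that both filtrations on $hP_n$ coincide with the restriction of $\mathcal A_*(RF_n)$.

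The inclusion $\Gamma_*(hP_n) \subseteq hP_n \cap \Gamma_*(hP\Sigma_n)$ is automatic from the functoriality of the lower central series applied to the inclusion $hP_n \hookrightarrow hP\Sigma_n$; this inclusion is provided by the homotopy Artin action, which realises $hP_n$ as the subgroup $\Aut_C^\partial(RF_n)$ of boundary-preserving basis-conjugating automorphisms of $RF_n$ inside $hP\Sigma_n = \Aut_C(RF_n)$ (cf.\ the discussion preceding Theorem \ref{Andreadakis_for_hP_n}).

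For the converse inclusion, I would first apply Theorem \ref{Andreadakis_for_wP_n} to rewrite $\Gamma_*(hP\Sigma_n) = hP\Sigma_n \cap \mathcal A_*(RF_n)$, then intersect with $hP_n$ and use $hP_n \subseteq hP\Sigma_n$ to obtain
\[hP_n \cap \Gamma_*(hP\Sigma_n) \;=\; hP_n \cap hP\Sigma_n \cap \mathcal A_*(RF_n) \;=\; hP_n \cap \mathcal A_*(RF_n).\]
Finally, Theorem \ref{Andreadakis_for_hP_n} identifies the right-hand side with $\Gamma_*(hP_n)$, which concludes the argument.

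There is no genuine obstacle here: all the substantive work has already been done in proving the two Andreadakis equalities, and this corollary is merely a formal consequence of the fact that both candidate filtrations on $hP_n$ are pinned down by the same universal filtration $\mathcal A_*(RF_n)$. The only non-trivial input to highlight is that $hP_n$ sits inside $hP\Sigma_n$ through the homotopy Artin action as a subgroup of basis-conjugating automorphisms of $RF_n$, so that the intersection $hP_n \cap hP\Sigma_n$ collapses to $hP_n$ in the chain of equalities above.
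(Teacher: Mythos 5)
Your proposal is correct and is exactly the argument the paper intends: the paper's proof is the one-line ``Combine Theorems \ref{Andreadakis_for_hP_n} and \ref{Andreadakis_for_wP_n}'', i.e.\ both filtrations on $hP_n$ are the trace of the same Andreadakis filtration $\mathcal A_*(RF_n)$, which is precisely the chain of equalities you wrote out. No gaps.
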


\begin{proof}
Combine Theorems \ref{Andreadakis_for_hP_n} and \ref{Andreadakis_for_wP_n}.
\end{proof}

\section{Topological interpretation}\label{Section4}

Consider the group $P_n$ of pure braids. \emph{Via} the decomposition $P_{n+1} \cong F_n \rtimes P_n$, we get and action of $P_n$ on the free group $F_n$, which is the classical \emph{Artin action}. Geometrically, it is best understood as the action of $P_n$, which is the \emph{motion group} of $n$ points in a plane, on the fundamental group of the plane with $n$ points removed. As mentioned above (\cref{par_Andreadakis_hPn}), this action is faithful, giving an embedding of $P_n$ into $\Aut(F_n)$, whose image is exactly the subgroup $\Aut_C^\partial(F_n)$ of automorphisms fixing the conjugacy class of each generator $x_i$, and preserving the \emph{boundary element} $x_1 \cdots x_n$ \cite[Th.\ 1.9]{Birman}.

An analogous statement is true for the group $P\Sigma_n$ of pure \emph{welded braids}. This group is a group of tube-shaped braids in $\mathbb R^4$, and can also be seen as the (pure) \emph{motion group} of $n$ unknotted circles in the three-dimensional space (see \cite{Damiani} on the different definitions on this group). It acts on the fundamental group of $\mathbb R^3$ with $n$  unknotted circles removed, which is again the free group $F_n$. This \emph{Artin action} is again faithful, and its image is exactly the subgroup $\Aut_C(F_n)$ of automorphisms fixing the conjugacy class of each generator $x_i$ \cite{Goldsmith-Motions}.

The same statements are true \emph{up to (link-)homotopy}. These have been recalled for braids in \cref{par_hPn}. For welded braids, link-homotopy of string links also makes sense (in the four-dimensional space), and for welded diagrams (which are another point of view on these objects), this relation correspond to virtualization of self-crossings. It has been shown in \cite[Th. 2.34]{ABMW} that the group of welded braids up to homotopy is isomorphic to the group $\Aut_C(RF_n) = hP\Sigma_n$ of automorphisms or $RF_n$ fixing the conjugacy class of each generator $x_i$.

We sum up the situation in the following table:
\begin{equation}
\begin{array}{c|c}
\textbf{Up to isotopy} &\textbf{Up to homotopy} \\[6pt]
\begin{tikzcd}
P_n \ar[d, hook] \ar[r, "\cong"] & \Aut_C^\partial(F_n)  \ar[d, hook]\\
P\Sigma_n        \ar[r, "\cong"] & \Aut_C(F_n). 
\end{tikzcd}
&\begin{tikzcd}
hP_n \ar[d, hook] \ar[r, "\cong"] & \Aut_C^\partial(RF_n)  \ar[d, hook]\\
hP\Sigma_n        \ar[r, "\cong"] & \Aut_C(RF_n). 
\end{tikzcd}
\end{array}
\end{equation}

\subsection{Milnor invariants}

Here we interpret our work in terms of \emph{Milnor invariants} of welded braids up to homotopy. Milnor invariants were first defined in \cite{Milnor-IL} for links, as integers with some indeterminacy. It appeared later that they were more naturally defined for string links, for which they are proper integers, the indeterminacy previously observed corresponding exactly to a choice of presentation of a link as the closure of a string-link. Here we focus on their definition for braids, which is not a restrictive choice when working up to homotopy. 

If $\beta$ is a pure braid, we can look at its image \emph{via} the Artin action, which is a basis-conjugating automorphism $x_i \mapsto x_i^{w_i}$. The element $w_i$ is well-defined up to left multiplication by $x_i^{\pm 1}$, so it is well-defined if we suppose that $x_i$ does not appear in the class $\overline w_i \in F_n^{ab}$. For each $i$, one can look at the image of the element $w_i \in F_n$ by the \emph{Magnus expansion} $\mu: F_n \hookrightarrow \widehat{T[n]}$, getting an element of the completion of the free associative ring $\widehat{T[n]}$ on $n$ generators $X_1, ..., X_n$, which can be seen as the ring of non-commutative power series on these generators. Recall that the Magnus expansion is defined by $x_i \mapsto 1+X_i$, and it is is an injection of the free group $F_n$ into $\widehat{T[n]}^\times$. Then the \emph{Milnor invariants} are the coefficients of the $\mu(w_i)$. Precisely, if $i \leq n$ is an integer, and $I = (i_1, ..., i_d)$ is any list of positive integer, then $\mu_{I,i}(\beta)$ is the coefficient of the monomial $X_{i_1} \cdots X_{i_d}$ in $\mu(w_i)$. Moreover, we call $d$ the \emph{degree} of the Milnor invariant $\mu_{I,i}$.

The first non-trivial Milnor invariants of $\beta$ can also be obtained through the Johnson morphism. Namely, let $d$ be the greatest integer such that $\beta \in \mathcal A_d(F_n)$ (we identify $\beta$ with its image \emph{via} the Artin action). By definition of $w_i$, $x_i$ does not appear in the class $\overline w_i \in F_n^{ab}$. Thus, we deduce from \cite[Lem. 6.3]{Darne2} that for all $j \geq 1$, $[x_i,w_i] \in \Gamma_{j+1}(F_n) \Leftrightarrow w_i \in \Gamma_d(F_n)$. This implies that $d$ is maximal such that all $w_i$ belong to $\Gamma_j(F_n)$. The image of $\overline \beta \in  \mathcal A_d/ \mathcal A_{d+1}$ by the Johnson morphism is the derivation of the free Lie algebra $\mathfrak L[n]$ given by $x_i \mapsto [x_i, \overline w_i]$, where $\overline w_i \in \Gamma_d/\Gamma_{d+1}(F_n) \cong \mathfrak L[n]_d$ is the class of $w_i$, possibly trivial (but non-trivial for at least one $i$). 

Now, we can watch the element $\overline w_i$ as being inside $T[n]_d$, and the inclusion of $\mathfrak L[n]$ into $T[n]$ is exactly the graded map induces by the Magnus expansion $\mu$. Precisely, if we call $\widehat T_1^d$ the ideal of $T[n]$ defined by elements of valuation at least $d$ (the valuation of a power series being the total degree of its least nontrivial monomial), then $\Gamma_d(F_n) = \mu^{-1}(1+\widehat T_1^d)$, and the induced map $\overline \mu: \Gamma_d/\Gamma_{d+1}(F_n) \hookrightarrow \widehat T_1^d/\widehat T_1^{d+1}$ identifies with the canonical inclusion of $\mathfrak L[n]_d$ into $T[n]_d$. As a consequence, the class $\overline w_i$ is the degree-$d$ part of $\mu(w_i)$, which has valuation at least $d$. We sum this up in the following:

\begin{prop}
The group $\mathcal A_d(F_n) \cap P_n$ is the set of braids with vanishing Milnor invariants of degree at most $d-1$. Moreover, Milnor invariants of degree $d$ of these braids can be recovered from their image by the Johnson morphism $\tau: \mathcal A_d/\mathcal A_{d+1} \hookrightarrow \Der_d(\mathfrak L[n])$.
\end{prop}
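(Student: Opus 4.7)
The proposition really only asks us to synthesize the material developed just before it, so the plan is to spell out two equivalences and then a recovery statement, in that order.

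First I would unwind the definitions to show $\beta\in\mathcal A_d(F_n)\cap P_n$ iff $w_i\in\Gamma_d(F_n)$ for every $i$, where $\beta$ acts by $x_i\mapsto x_i^{w_i}$ with $w_i$ chosen (uniquely) so that $x_i$ does not appear in $\overline w_i\in F_n^{ab}$. Since $\Gamma_*(F_n)$ is generated in degree~$1$, the condition $\beta\in\mathcal A_d$ reduces to $[\beta,x_i]=x_i^{w_i}x_i^{-1}\in\Gamma_{d+1}(F_n)$ for all~$i$. The Darne2 lemma already cited in the text gives $[x_i,w_i]\in\Gamma_{d+1}(F_n)\Leftrightarrow w_i\in\Gamma_d(F_n)$ under exactly this normalization of~$w_i$, which is the equivalence we need.

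Next I would use the Magnus expansion $\mu\colon F_n\hookrightarrow\widehat{T[n]}$ to rephrase the preceding condition in terms of Milnor invariants. The identification $\Lie(F_n)\cong\mathfrak L[n]\hookrightarrow T[n]$ obtained via Lazard's theorem (recalled in the free-group reminder of Section~1) says precisely that $\Gamma_d(F_n)=\mu^{-1}(1+\widehat T_1^d)$; hence $w_i\in\Gamma_d(F_n)$ for all $i$ iff every $\mu(w_i)$ has valuation at least $d$, iff every Milnor invariant $\mu_{I,i}(\beta)$ with $|I|\leq d-1$ vanishes. Combined with the previous step, this proves the first assertion.

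For the second assertion, I would compute the image of $\overline\beta\in\mathcal A_d/\mathcal A_{d+1}$ under the Johnson morphism: by definition it is the degree-$d$ derivation of $\mathfrak L[n]$ sending $x_i$ to $[x_i,\overline w_i]$, where $\overline w_i\in\Gamma_d/\Gamma_{d+1}(F_n)\cong\mathfrak L[n]_d$. The induced graded map $\overline\mu\colon\Gamma_d/\Gamma_{d+1}(F_n)\hookrightarrow\widehat T_1^d/\widehat T_1^{d+1}\cong T[n]_d$ identifies with the canonical inclusion $\mathfrak L[n]_d\hookrightarrow T[n]_d$, so $\overline w_i$ is exactly the degree-$d$ homogeneous part of $\mu(w_i)$, whose coefficients on the basis $X_{i_1}\cdots X_{i_d}$ are the Milnor invariants $\mu_{I,i}(\beta)$ of degree~$d$. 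Since $\tau$ is injective, the data of $\tau(\overline\beta)$ determines the tuple $(\overline w_i)_i$ and hence these Milnor invariants, as claimed.

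The argument has no genuine obstacle: the only point that needs care is the bookkeeping in the first step (the normalization of $w_i$ and the reduction of ``$\beta\in\mathcal A_d$'' to the single-generator condition), and this is exactly what the cited Darne2 lemma handles. Everything else is an application of Lazard's embedding of $\mathfrak L[n]$ into $T[n]$ and the definition of the Johnson morphism as $\overline\sigma\mapsto\overline{[\sigma,-]}$.
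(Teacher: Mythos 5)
Your proposal is correct and follows essentially the same route as the paper: the paper's ``proof'' is precisely the discussion preceding the proposition, which normalizes the $w_i$ so that $x_i$ does not appear in $\overline w_i$, invokes the cited lemma to translate $\beta\in\mathcal A_d$ into $w_i\in\Gamma_d(F_n)$ for all $i$, uses $\Gamma_d(F_n)=\mu^{-1}(1+\widehat T_1^d)$ to identify this with the vanishing of Milnor invariants of degree at most $d-1$, and reads off the degree-$d$ invariants from $\tau(\overline\beta)\colon x_i\mapsto[x_i,\overline w_i]$ via the identification of $\overline\mu$ with the inclusion $\mathfrak L[n]_d\hookrightarrow T[n]_d$. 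The only cosmetic difference is that you make explicit the reduction of ``$\beta\in\mathcal A_d$'' to the condition on the generators $x_i$, which the paper leaves implicit.
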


Obviously, since we have not used anywhere that the automorphism $\beta$ preserves the boundary element, these constructions work for all welded braids (that is, for all basis-conjugating automorphisms of $F_n$).

Let us now explain how to define Milnor invariants for (welded) braids up to homotopy. First, we need to replace $F_n$ by $RF_n$. Then we can assume that $x_i$ does not appear in $w_i$ (since $x_i^{ux_iv} = x_i^{uv}$ in the reduced free group). The Magnus expansion must be replaced by the morphism \eqref{canonical_inj}, and we get only Milnor invariants without repetitions (that is, $I$ must be without repetition in order to define a non-trivial $\mu_{I,i}$). Everything works as described above (using the work done in \cref{par_A[Y]}), so that $\mathcal A_d(F_n)$ is exactly the subgroup where invariants of degree at most $d-1$ vanish. So we can reformulate our theorems \ref{Andreadakis_for_wP_n} and \ref{Andreadakis_for_hP_n} as:

\begin{theo}\label{Andreadakis_Milnor}
Homotopy Milnor invariants of degree at most $d$ classify braids up to homotopy (resp.\ welded braids up to homotopy) up to elements of $\Gamma_{d+1}(hP_n)$ (resp. up to elements of $\Gamma_{d+1}(hP\Sigma_n)$).
\end{theo}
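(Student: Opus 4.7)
The theorem is a direct reformulation of the Andreadakis equalities (Theorems \ref{Andreadakis_for_wP_n} and \ref{Andreadakis_for_hP_n}) in the language of Milnor invariants. The plan is to make precise the dictionary already sketched in the paragraph preceding the statement, and then invoke the Andreadakis equality.

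The first step is to establish, for $\beta \in \Aut_C(RF_n)$ with $\beta(x_i) = x_i^{w_i}$ (where each $w_i$ is chosen so that the letter $x_i$ does not appear in it), the chain of equivalences
\[\beta \in \mathcal{A}_{d+1}(RF_n) \iff \forall i,\ w_i \in \Gamma_{d+1}(RF_n) \iff \forall i,\ \mu(w_i) - 1 \in \hat{A}^{d+1},\]
together with the identification of the last condition with the vanishing of every homotopy Milnor invariant of $\beta$ of degree at most $d$. The first equivalence follows from the characterization of $\mathcal A_{d+1}$ as $\{\sigma \mid [\sigma, x_i] \in \Gamma_{d+2}(RF_n)\text{ for all } i\}$, combined with the computation $[\beta, x_i] = [w_i^{-1}, x_i]$ and the key fact that, for $w_i$ normalized as above, $[w_i^{-1}, x_i] \in \Gamma_{j+1}(RF_n)$ if and only if $w_i \in \Gamma_j(RF_n)$; this is the analogue in $RF_n$ of \cite[Lem.~6.3]{Darne2}, whose proof adapts essentially verbatim thanks to the centralizer computation $C(x_i) = \mathcal N(x_i)$ (Lemma \ref{centralizers_in_RF}). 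The second equivalence comes from Lazard's theorem applied to the reduced Magnus embedding $\mu: RF_n \hookrightarrow A[Y]^\times$ (Corollary \ref{RF_in_A}), since the associated graded map is the isomorphism $\Lie(RF_n) \cong R\mathfrak L_n$ of Theorem \ref{L(RF) = LR}. Finally, the identification of the coefficients of $\mu(w_i)$ with the Milnor invariants is just their definition.

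The second step is then immediate: combining the equivalence of Step 1 with the Andreadakis equality $G \cap \mathcal A_{d+1}(RF_n) = \Gamma_{d+1}(G)$ for $G \in \{hP_n, hP\Sigma_n\}$ (Theorems \ref{Andreadakis_for_wP_n} and \ref{Andreadakis_for_hP_n}) yields that $\beta \in G$ has all Milnor invariants of degree at most $d$ equal to zero exactly when $\beta \in \Gamma_{d+1}(G)$, which is the classification statement. (To upgrade this from a vanishing criterion to genuine classification modulo $\Gamma_{d+1}(G)$, one checks, using multiplicativity of $\mu$, that these invariants are well defined on cosets of $\Gamma_{d+1}(G)$ and that they separate them; the separation reduces to the vanishing criterion applied to $\beta^{-1}\beta'$ after a routine normalization argument.)

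The main obstacle is the first equivalence of Step 1, specifically the implication $[w_i^{-1}, x_i] \in \Gamma_{d+2}(RF_n) \Rightarrow w_i \in \Gamma_{d+1}(RF_n)$, which requires both the correct normalization of $w_i$ and the use of the centralizer structure of $RF_n$. Everything else is either a direct consequence of results already established (Lazard's theorem, Theorem \ref{L(RF) = LR}, Corollary \ref{RF_in_A}, Theorems \ref{Andreadakis_for_wP_n} and \ref{Andreadakis_for_hP_n}) or is purely definitional.
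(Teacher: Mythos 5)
Your proposal is correct and follows essentially the same route as the paper: the paper also treats this theorem as a direct reformulation of Theorems \ref{Andreadakis_for_wP_n} and \ref{Andreadakis_for_hP_n}, obtained by first setting up (in the classical case, via the analogue of \cite[Lem.~6.3]{Darne2}) the dictionary between membership in $\mathcal A_{d+1}$ and the vanishing of the degree-$\le d$ coefficients of the Magnus/Milnor expansions of the normalized conjugators $w_i$, and then transporting it to $RF_n$ using the reduced expansion $\mu$ and Theorem \ref{L(RF) = LR}. Your identification of the one genuinely non-definitional step (the equivalence $[w_i^{-1},x_i]\in\Gamma_{j+1}\Leftrightarrow w_i\in\Gamma_j$ for normalized $w_i$, resting on Lemma \ref{centralizers_in_RF}/\ref{centralizers_in_RL}) matches exactly what the paper leaves implicit under ``everything works as described above.''
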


\begin{rmq}
The group $\Gamma_{d+1}(hP_n)$ can also be seen as the set of braids which are homotopic to elements of $\Gamma_{d+1}(P_n)$.
\end{rmq}

\subsection{Arrow calculus}

We now explain briefly the precise link between our work and the work of Meilhan and Yasuara in \cite{Meilhan-Yasuhara}. We will not give any definition here; the reader is referred to their paper for basic definitions and details. 

Our understanding of the link between our work and theirs relies on the following remark: \textbf{Ccalculus of arrows and $w$-trees is the same thing as commutator calculus in the welded braid group $P\Sigma_n$}. Precisely, when attaching a tree $T$ to a diagram $D$, one has to select the points where the root and leaves of $T$ are attached. If we consider a little arc around each of these points, we see that doing so consists in choosing $n$ strands (which inherit there orientation from $D$). Then the data of $T$ describes an element of the braid group on these strands, and doing the surgery along $T$ is exactly the same as inserting the braid described by $T$ at the chosen spot on $D$, to get the new diagram $D_T$. Namely, a single arrow from a strand $j$ to a strand $i$ describes the insertion of the braid $\chi_{ij}$, and a tree with root at $i$ describes the insertion of a commutator between the $\chi_{ij}$, for varying $j$ (note that any number of strands can be added). 

In the light of this remark, we can see that many relations they describe correspond to algebraic relations written in the present paper. Also, two diagrams are $w_k$-equivalent if and only if they can be obtained from one another by inserting braids in $\Gamma_k(P_n)$ (for varying $n$). And we can in fact deduce our Andreadakis equality (Theorem \ref{Andreadakis_Milnor}) from their classification theorem of welded string links up to homotopy \cite[Th.\ 9.4]{Meilhan-Yasuhara}. They fell short of doing so, stating only their weaker corollary 9.5. In fact, they did not look for the precise identification between trees and commutator calculus that I have described here. They only knew that something of the sort should be true, but were interested in other matters at the time.

\section{A presentation of the homotopy loop braid group}\label{par_pstation}

In \cite{Goldsmith-Braids}, Goldsmith gave a presentation of the braid group up to homotopy (see also \cref{par_hPn}). She proved that, to a presentation of the pure braid group with generators $A_{ij}$, one has to add the family of relations making each $\langle A_{1k}, ..., A_{(k-1)k} \rangle$ into a reduced free group. The goal of the present section is to  give a similar presentation of the loop braid group up to homotopy. The situation here it more intricate: to a presentation of $P\Sigma_n$ with generators $\chi_{ij}$, we have to add three families of relations:
\begin{relations}
  \item The relations saying that for all $m$, $\langle \chi_{mk} \rangle_{k < m}$ is reduced.
  \item $[\chi_{im}, w, \chi_{jm}] = 1$, for $i,j < m$ and $w \in \langle \chi_{mk} \rangle_{k < m}$.
  \item $[\chi_{im}, w, \chi_{mi}] = 1$, for $i < m$ and $w \in \langle \chi_{mk} \rangle_{k < m, k \neq i}$.
\end{relations}

Remark that because of the symmetry with respect to the generators of $RF_n$, these relations are still true if we replace each symbol $"<"$ by a symbol $"\neq"$, which would give a more symmetric set of relations.

\begin{rmq}
These relations also describe the quotient of the group $wB_n$ of all welded braids by the homotopy relation. Indeed, performing a homotopy cannot move endpoints of string links, so that the subgroup of relations must be a subgroup of the \emph{pure} welded braid group, like in the classical setting \cite[Lem. 1]{Goldsmith-Braids}.
\end{rmq}

\subsection{Generators of nilpotent groups}

One key argument in the determination of a presentation of $hP\Sigma_n$ consists in lifting generators from Lie rings to groups. Such generators will be obtained from combinatorics in the free Lie ring (see our appendix), and their lifting will use the nilpotence of the groups involved.

\begin{conv}
By a \emph{finite} filtration, we always mean a separating one: a strongly central series $G_*$ is \emph{finite} if their exists a $i \geq 1$ such that $G_i =\{1\}$. In particular, if there exists a finite strongly central series on $G$, then $G$ must be nilpotent (recall that $G_i \supseteq \Gamma_iG$).
\end{conv}

\begin{lem}\label{gen_of_nilp}
Let $G_*$ be a finite strongly central filtration on a (nilpotent) group $G$. Suppose that the $x_\alpha$ are elements of $G$ such that their classes $\overline x_\alpha$ generate the Lie ring $\Lie(G_*)$. Then the $x_\alpha$ generate $G$.
\end{lem}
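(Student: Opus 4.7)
The plan is to let $H := \langle x_\alpha \rangle \leq G$ and show by descending induction on $i$ that $G_i \subseteq H$, using the fact that the filtration $G_*$ is finite so there is some $N$ with $G_{N+1} = \{1\}$. The base case $G_{N+1} = \{1\} \subseteq H$ is trivial.

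For the inductive step, I would assume $G_{i+1} \subseteq H$ and fix $g \in G_i$. Its class $\overline{g} \in \Lie_i(G_*) = G_i/G_{i+1}$ can be written, by the generation hypothesis, as a $\Z$-linear combination of iterated Lie brackets of the $\overline{x}_\alpha$; since $\Lie(G_*)$ is graded and $\Lie_i$ sits in weight $i$, one may keep only those iterated brackets whose total weight (sum of degrees of the $\overline{x}_\alpha$ involved) equals $i$. The key point, which is exactly the definition of the Lie bracket on $\Lie(G_*)$ induced by group commutators and the strong centrality of $G_*$, is that for $u \in G_p$ and $v \in G_q$ one has $[u,v] \in G_{p+q}$ and $[\overline{u}, \overline{v}] = \overline{[u,v]}$ in $\Lie_{p+q}(G_*)$. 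Iterating, every iterated Lie bracket of total weight $i$ of the $\overline{x}_\alpha$ lifts to the corresponding iterated group commutator of the $x_\alpha$, which lies in $H \cap G_i$. Since $\Lie_i(G_*)$ is abelian, a $\Z$-linear combination of such brackets lifts to a product of these commutators and their inverses, giving an element $h \in H \cap G_i$ with $\overline{h} = \overline{g}$. Then $gh^{-1} \in G_{i+1} \subseteq H$ by induction, so $g \in H$.

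Applying this at $i=1$ gives $G = G_1 \subseteq H$, i.e.\ $G = H$, as desired. I do not expect a serious obstacle here: the argument is entirely formal, relying only on strong centrality (to lift brackets to commutators staying in the correct filtration piece) and on finiteness of the filtration (to make the descending induction terminate). The one mildly delicate point is to keep careful track of weights, noting that the $\overline{x}_\alpha$ need not all sit in $\Lie_1$, so ``iterated bracket of weight $i$'' must be interpreted as sum-of-degrees, rather than number-of-letters.
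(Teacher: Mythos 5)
Your proof is correct and is in substance the same as the paper's: the paper phrases the descending induction as an application of the five-lemma to the comparison morphism $\Lie(G_*\cap K)\to\Lie(G_*)$ (where $K=\langle x_\alpha\rangle$), whose surjectivity is exactly your explicit lifting of weight-$i$ Lie brackets to iterated group commutators. Your care about the $\overline x_\alpha$ possibly living in degrees $>1$ is well placed and causes no problem.
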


\begin{proof}
Consider the subgroup $K$ of $G$ generated by the $x_\alpha$. The canonical morphism from $\Lie(G_*\cap K)$ to $\Lie(G_*)$ comes from an injection between filtrations, so it is injective. By hypothesis, it is also surjective. By induction (using the five-lemma), we deduce that $K/(G_j \cap K) = G/G_j$, for all $j$. Since there exists $j$ such that $G_j = \{1\}$, this proves that $K=G$, whence the conclusion.
\end{proof}

The definition of the Lyndon monomials $P_w$ (\cref{par_Lyndon_basis}) makes sense in any group, if we interpret letters as elements of the group, and brackets as commutators. 

\begin{prop}\label{reduced_normal_closure}
Let $G$ be a nilpotent group generated by a set $X$, and $x \in X$. Then the normal closure $\mathcal N(x)$ of $x$ in $G$ is generated by Lyndon monomials $P_w$, for Lyndon words $w \in X^*$ containing $x$.
\end{prop}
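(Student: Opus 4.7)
The strategy is to establish the proposition first for the free nilpotent group on $X$ and then deduce the general case by projection.

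\textbf{Reduction to the free nilpotent case.} Let $c$ denote the nilpotency class of $G$, and let $F := F[X]/\Gamma_{c+1}F[X]$ be the free $c$-nilpotent group on $X$. Since $G$ is itself $c$-nilpotent and generated by $X$, the identity on $X$ extends to a canonical surjection $\pi : F \twoheadrightarrow G$. Every conjugate of $x$ in $G$ lifts through $\pi$ to a conjugate of $x$ in $F$, so $\mathcal N_G(x) = \pi(\mathcal N_F(x))$; moreover $\pi$ carries each Lyndon monomial $P_w \in F$ to the corresponding element $P_w \in G$. It therefore suffices to prove the statement for $F$.

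\textbf{Computing the associated Lie ring.} The retraction $x \mapsto 1$ splits the short exact sequence
\[\mathcal N_F(x) \hookrightarrow F \twoheadrightarrow F[X\setminus\{x\}]/\Gamma_{c+1}F[X\setminus\{x\}].\]
Applying the semi-direct product formalism for strongly central filtrations (cf.\ \cite[\S 3.1]{Darne2}, recalled in the introduction), this yields an exact sequence of graded Lie rings
\[\Lie(\mathcal N_F(x) \cap \Gamma_* F) \hookrightarrow \Lie(F) \twoheadrightarrow \Lie(F[X\setminus\{x\}]/\Gamma_{c+1}).\]
The middle and right terms identify with the free Lie rings $\mathfrak L[X]$ and $\mathfrak L[X\setminus\{x\}]$ truncated in degree $\leq c$, and the right-hand map is induced by $x \mapsto 0$. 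By splitting the Lyndon basis of $\mathfrak L[X]$ according to whether the letter $x$ appears (see the appendix), the kernel of this map is precisely the submodule spanned by the classes $\overline{P_w}$ for $w$ a Lyndon word of length $\leq c$ containing $x$.

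\textbf{Conclusion.} For every such $w$, the element $P_w \in F$ is an iterated commutator involving $x$, hence lies in $\mathcal N_F(x) \cap \Gamma_{|w|} F$, and its class realizes the corresponding Lyndon basis element of the kernel. The $\overline{P_w}$'s therefore form a basis of $\Lie(\mathcal N_F(x) \cap \Gamma_* F)$. Since $F$ is nilpotent, the filtration $\mathcal N_F(x) \cap \Gamma_* F$ is finite, so Lemma~\ref{gen_of_nilp} implies that the $P_w$'s themselves generate $\mathcal N_F(x)$; applying $\pi$ concludes the proof.

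\textbf{Main obstacle.} The heart of the argument is the exactness of the Lie ring sequence, which rests on the decomposition $F = \mathcal N_F(x) \rtimes F[X\setminus\{x\}]/\Gamma_{c+1}$ as a (highly non-almost-direct) semi-direct product; without the full strength of the formalism recalled in the introduction, one could not identify $\Lie(\mathcal N_F(x) \cap \Gamma_* F)$ with the $x$-containing part of $\Lie(F)$. Once this identification is established, the Lyndon basis combinatorics from the appendix furnishes the explicit generators and Lemma~\ref{gen_of_nilp} does the rest.
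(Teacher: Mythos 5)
Your proposal is correct and follows essentially the same route as the paper: reduce to the free $c$-nilpotent group, use the split exact sequence coming from the retraction $x\mapsto 1$ to identify $\Lie(\mathcal N(x)\cap\Gamma_*F)$ with the $x$-containing part of the (truncated) free Lie ring via its Lyndon basis, and conclude with Lemma~\ref{gen_of_nilp}. Your write-up is merely a little more explicit than the paper's about why the Lie-ring sequence is exact (the semi-direct product formalism for the non-almost-direct splitting), which is a reasonable elaboration rather than a different argument.
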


\begin{proof}
By taking images in $G$, it is enough to show this for the free nilpotent group $F_j[X]:= F[X]/\Gamma_{j+1}$. In this case, $\mathcal N(x)$ is the kernel of the canonical projection from from $F_j[X]$ to $F_j[X-\{x\}]]$. Setting $\mathcal N_*(x):= \mathcal N(x) \cap \Gamma_*(F_j[X])$, we get a short exact sequence of filtrations translating into a short exact sequence of Lie rings:
\[\begin{tikzcd} \Lie(\mathcal N_*(x)) \ar[r, hook] &\Lie(F_j[X]) \ar[r, two heads] & \Lie(F_j[X-\{x\}]). \end{tikzcd}\]
Since $\Lie(F_j[X])$ is the $j$-th truncation of the free Lie algebra on $Y = \overline X$, and the projection $p$ is the canonical one (sending $y = \overline x$ to $0$), the subring $\Lie(\mathcal N_*(x))$ identifies with the $j$-th truncation of the ideal $\langle y \rangle$ of $\mathfrak L [Y]$. This ideal is linearly generated by Lyndon Lie monomials on $Y$ containing $y$. Since these are the classes of the corresponding monomials in the group $F_j[X]$, Lemma \ref{gen_of_nilp} gives the desired conclusion.
\end{proof}

\begin{cor}\label{Basis_of_N(x_i)}
Let $X$ be a set, and $x \in X$. The normal closure $\mathcal N(x)$ of $x$ in $RF[X]$ is free abelian on the Lyndon monomials $P_w$, for Lyndon words without repetition $w \in X^*$ containing $x$.
\end{cor}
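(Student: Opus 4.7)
The corollary should follow by combining the abelianness of $\mathcal N(x)$ (immediate from the defining relations of $RF[X]$) with the identification of its associated graded, using the strongly central filtration inherited from $\Gamma_*(RF[X])$.

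First, I would reduce to the case where $X$ is finite. Every element of $\mathcal N(x)$ already lies in $\mathcal N_{RF[W]}(x)$ for some finite $W \ni x$, since any element of $RF[X]$ is supported on finitely many letters; and the inclusion $RF[W] \hookrightarrow RF[X]$ is split by the projection sending $X - W$ to $1$, which identifies $\mathcal N_{RF[X]}(x) \cap RF[W]$ with $\mathcal N_{RF[W]}(x)$. The Lyndon words without repetition containing $x$ in $X^*$ are exactly the union of those in $W^*$ for varying finite $W \ni x$, so the finite case passes to the limit.

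Assume $X$ is finite, say of size $n$, so that $RF[X]$ is $n$-nilpotent by Proposition~\ref{RF_is_nilp}. The defining relations say that $x$ commutes with each of its conjugates; but $\mathcal N(x)$ is generated as a group by those conjugates, so $\mathcal N(x)$ is abelian. Next, equip $\mathcal N(x)$ with the filtration $\mathcal N_*(x) := \mathcal N(x) \cap \Gamma_*(RF[X])$, which is strongly central as an intersection with a strongly central filtration, and separating because $\Gamma_{n+1}(RF[X]) = \{1\}$. The split short exact sequence $\mathcal N(x) \hookrightarrow RF[X] \twoheadrightarrow RF[X - \{x\}]$ induces (exactly as in the proof of Theorem~\ref{L(wPn)}) a short exact sequence of Lie rings
\[\Lie(\mathcal N_*(x)) \hookrightarrow \Lie(RF[X]) \twoheadrightarrow \Lie(RF[X - \{x\}]),\]
which via Theorem~\ref{L(RF) = LR} identifies $\Lie(\mathcal N_*(x))$ with the kernel of the projection $R\mathfrak L[Y] \twoheadrightarrow R\mathfrak L[Y - \{y\}]$, that is, with the Lie ideal $\langle y \rangle$. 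By Proposition~\ref{rk_of_RL} (and its proof via Lemma~\ref{lem_letters}), a $\mathbb Z$-basis of $\langle y \rangle$ is given by the Lyndon Lie monomials $P_w$ for $w$ a Lyndon word without repetition containing $y$.

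Finally, I would lift this basis to $\mathcal N(x)$. For each Lyndon word $w \in X^*$ without repetition containing $x$, interpret $P_w$ as an iterated commutator in $RF[X]$; it lies in $\mathcal N_{|w|}(x)$ and its class modulo $\mathcal N_{|w|+1}(x)$ is the corresponding basis element of $\langle y \rangle_{|w|}$. Since $\mathcal N(x)$ is abelian, separately filtered by the finite separating filtration $\mathcal N_*(x)$, and the successive graded pieces $\langle y \rangle_k$ are free abelian, the short exact sequences $0 \to \mathcal N_{k+1}(x) \to \mathcal N_k(x) \to \langle y \rangle_k \to 0$ all split; chaining these splittings from top (where $\mathcal N_{n+1}(x) = 0$) downwards shows that $\mathcal N(x)$ is free abelian on the chosen lifts $P_w$.

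The main (modest) obstacle is not the proof of freeness itself, but seeing that the Lyndon monomials $P_w$ \emph{with} repetition produced by Proposition~\ref{reduced_normal_closure} are redundant. This is automatic from the argument above: those monomials vanish in the graded Lie ring $R\mathfrak L[Y]$ by Proposition~\ref{rk_of_RL}, so they contribute nothing outside the already-accounted-for pieces of the filtration $\mathcal N_*(x)$, and the rank count matches exactly the Lyndon words without repetition.
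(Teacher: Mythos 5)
Your argument is correct and follows essentially the same route as the paper: reduce to finite $X$, observe that $\mathcal N(x)$ is abelian from the defining relations, identify $\Lie(\mathcal N_*(x))$ with the ideal $\langle y\rangle$ whose basis consists of Lyndon monomials without repetition containing $y$, and lift back to the group. The only difference is cosmetic: the paper gets generation from Proposition~\ref{reduced_normal_closure} and independence by projecting the lowest-degree part of a putative relation into $\Gamma_l/\Gamma_{l+1}$, whereas you obtain both at once by splitting the filtration degree by degree — two packagings of the same graded argument.
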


\begin{proof}
It is enough to show this for $X$ finite. Then $RF[X]$ is nilpotent, and we can apply Proposition \ref{reduced_normal_closure} to show that Lyndon monomials without repetition containing $x$ generate $\mathcal N(x)$. Indeed, in $RF[X]$, the only non-trivial Lyndon monomials in elements of $X$ are those without repetition. Moreover, $\mathcal N(x)$ is abelian, by definition of $RF[X]$. We are thus left with proving that these elements are linearly independent. But any non-trivial linear relation between them would give a non-trivial linear relation between Lyndon polynomials without repetition in $\Lie(RF[X])$ (take $l$ to be the minimal length of the monomials involves, and project the relation into $\Gamma_l/\Gamma_{l+1}$). Such a relation cannot hold (Proposition \ref{rk_of_RL}), so this proves the corollary.
\end{proof}

If $g, g_1, ..., g_m$ are elements of a group, let us denote by $\Lynd(g; g_1,...,g_m)$ the family of Lyndon monomials $(P_w)$, where $w$ runs through Lyndon words without repetition on the letters $g, g_1, ..., g_m$ which contain $g$. 
When considering these sets, we will choose an order on the letters making all $g_i$ greater than $g$. In that case, elements of $\Lynd(g; g_1,...,g_m)$ are of the form $[[g,P_v],P_w]$, where neither $v$ nor $w$ contains $g$. As usual, we denote by $(g_1,..., \widehat g_i, ..., g_m)$ the $(m-1)$-tuple obtained from $(g_1,..., g_m)$ by removing the $i$-th component.

We now use Cor.\ \ref{Basis_of_N(x_i)} in order to get a basis of the group $\mathcal K_n'$ introduced in \cref{section_Andreadakis} from the decomposition obtained in Lemma \ref{decomposition_lemma_2}.

\begin{lem}\label{basis_of_Kn'}
A basis of the abelian group $\mathcal K_n'$ is given by:
\[\bigcup\limits_i \Lynd(\chi_{in}; \chi_{n1},..., \widehat{\chi_{ni}}, ...,\chi_{n,n-1}).\]
\end{lem}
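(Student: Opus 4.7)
The plan is to combine Lemma~\ref{decomposition_lemma_2} (which describes $\mathcal K_n'$ as a direct product) with Corollary~\ref{Basis_of_N(x_i)} (which provides a Lyndon basis for each factor), and then transport this basis through the Lie algebra identification of Theorem~\ref{L(wPn)}.

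First, I would observe that by Lemma~\ref{decomposition_lemma_2}, $\mathcal K_n'$ is abelian and isomorphic to $\prod_{i<n}\mathcal N(x_n)/x_i$. Each factor $\mathcal N(x_n)/x_i$ is the normal closure of $x_n$ in $RF[\{x_j:j\neq i\}]\cong RF_{n-1}$, so by Corollary~\ref{Basis_of_N(x_i)} it is free abelian with basis the Lyndon monomials without repetition containing $x_n$ in the letters $\{x_j:j\neq i\}$. Hence $\mathcal K_n'$ is free abelian of rank $\sum_{i<n}|\Lynd(x_n;x_j,j<n,j\neq i)|$, and it suffices to exhibit a generating family of this cardinality.

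Next, one checks that each proposed element $P_w\in\Lynd(\chi_{in};\chi_{nj},j<n,j\neq i)$ does lie in $\mathcal K_n'$: it is an iterated commutator involving $\chi_{in}\in\mathcal K_n'$, and $\mathcal K_n'$ is normal in $\mathcal K_n$. The heart of the argument is then to identify the classes of these $P_w$ in the associated graded Lie algebra. By Theorem~\ref{L(wPn)} we have $\Lie(\mathcal K_n')\cong\prod_{i<n}\langle y_i\rangle$, and tracing through the identifications in its proof (notably the swap $x_i\leftrightarrow x_n$ sending $\mathcal N(x_n)/x_i\cong\mathcal N(x_i)/x_n$ and the inclusion $t_n$) one sees that $\bar\chi_{in}$ corresponds to the generator $y_i$ of the $i$-th $\langle y_i\rangle$-factor, while $\bar\chi_{nj}$ corresponds to $y_j$ in the semi-direct base $R\mathfrak L_{n-1}$. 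Since iterated group commutators project to iterated Lie brackets at the top degree of the associated graded, the class of $P_w(\chi_{in},\chi_{nj_\ell})$ in $\Lie(\mathcal K_n)$ equals the iterated bracket $P_w(y_i,y_{j_\ell})$; the semi-direct bracket rule places it in the factor $\langle y_i\rangle$. By Proposition~\ref{rk_of_RL}, as $w$ ranges over Lyndon words without repetition containing $y_i$, these brackets form a basis of $\langle y_i\rangle$. Taking the union over $i<n$ yields a basis of $\Lie(\mathcal K_n')$, and a standard lifting argument for (abelian) nilpotent groups, in the spirit of Lemma~\ref{gen_of_nilp}, promotes this to a generating family of $\mathcal K_n'$; the rank equality then forces it to be a basis.

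The main obstacle I expect lies in the Lie-algebraic identification: one must verify carefully that the identification of the $i$-th factor of $\Lie(\mathcal K_n')$ with $\langle y_i\rangle$, obtained in the proof of Theorem~\ref{L(wPn)} via $\mathcal N(x_n)/x_i\cong\mathcal N(x_i)/x_n$, really does send $\bar\chi_{in}=\overline{c_i(x_n)}$ to $y_i$ and interprets conjugation by $\chi_{nj}$ as Lie bracketing with $y_j$. This is a book-keeping issue about how $c_i$ and $t_n$ interact through the swap automorphism, but handling it carefully is essential, since a direct combinatorial identity $c_i(P_w(x_n,x_{j_\ell}))=P_w(\chi_{in},\chi_{nj_\ell})$ in the group itself need only hold up to corrections coming from the reduced free relations, whereas in the Lie algebra the correspondence is clean.
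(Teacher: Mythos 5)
Your proposal is correct and follows essentially the same route as the paper: both arguments come down to identifying the classes of the proposed iterated commutators in the associated graded Lie algebra with the Lyndon basis of $\prod_i \langle y_i \rangle$ (the paper transports everything through the equivariant isomorphism $c_i$ and computes the classes via tangential derivations, which is exactly the book-keeping you flag as the main obstacle), and then lifting back using nilpotence and the free-abelian structure of $\mathcal K_n'$. The only cosmetic difference is your concluding rank-count, where the paper instead reruns the linear-independence argument of Corollary~\ref{Basis_of_N(x_i)} inside $RF_n/x_i$.
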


\begin{proof}
We use notations from the proof of Lemma \ref{decomposition_lemma_2}. Equivariance of the isomorphism $c_i$ ensures that $c_i^{-1}$ sends the set $\Lynd(\chi_{in}; \chi_{n1},..., \widehat{\chi_{ni}}, ..., \chi_{n,n-1})$ to the set $\mathcal B:= \Lynd(x_n; \chi_{n1},..., \widehat{\chi_{ni}}, ..., \chi_{n,n-1})$, the latter brackets being computed in the semi-direct product $\mathcal (N(x_n)/x_i) \rtimes \langle \chi_{nj} \rangle_j$. If $v \in RF_{n-1}$, we denote by $\chi_v$ the automorphism of $RF_n$ sending $x_n$ to $x_n^v$ and fixing all other generators  ($\chi_v$ was denoted by $t_n(v)$ above). Elements of $\mathcal B$ are of the form $[[x_n,\chi_v], \chi_w]$, where $\chi_v$ and $\chi_w$ are Lyndon monomials in the $\chi_{nj}$ ($j \neq i$), which means exactly that $v$ and $w$ are Lyndon monomials in the $x_j$ ($j \neq i,n$), since $t_n: v \mapsto \chi_v$ is a morphism. Recall that the class of $\chi_v$ in the Lie algebra $\Lie(\mathcal A_*(RF_n))$ acts on the Lie algebra $R\mathfrak L_n$ \emph{via} the tangential derivation $\tau(\overline \chi_v)$ induced by $[\chi_v,-]$, sending $x_n$ to $[x_n, v]$ and all other $x_i$ to $0$. As a consequence, the class of $[[x_n,\chi_v], \chi_w]$ in the Lie algebra $\Lie(\mathcal N_*(x_n)/x_i) \subset R\mathfrak L_n$ is:
\[\tau(\overline \chi_w)\tau(\overline \chi_v)(x_n) = \tau(\overline \chi_w)([v,x_n]) = [v,[w,x_n]] =[[x_n,w],v],\]
since the derivation $\tau(\overline \chi_w)$ vanishes on $v$. As a consequence, the family $\mathcal B$ is another lift of the basis of $\Lie(\mathcal N_*(x_n)/x_i)$ considered above, and the same proof as the proof of corollary \ref{Basis_of_N(x_i)} (in $RF_n/x_i \cong RF_{n-1}$) shows that it is a basis of $\mathcal N(x_n)/x_i$, whence the result.
\end{proof}

\begin{rmq}
In the semi-direct product $\mathcal (N(x_n)/x_i) \rtimes \langle \chi_{nj} \rangle_j$ which appears in the proof; the group $\langle \chi_{nj} \rangle_j$ is isomorphic to $RF_{n-1}$ but its action is \emph{not} the conjugation action.
\end{rmq}

\subsection{The presentation}

Let us recall the relations on the $\chi_{ij}$ that will give a presentation of $hP\Sigma_n$:
\begin{relations}
\setcounter{enumi}{-1}
  \item \label{item:R0} The McCool relations on the $\chi_{ij}$ (see the Introduction).
  \item \label{item:R1} $[\chi_{mi}, w, \chi_{mi}] = 1$, for $i < m$, and $w \in \langle \chi_{mk} \rangle_{k < m}$.
  \item \label{item:R2} $[\chi_{im}, w, \chi_{jm}] = 1$, for $i,j < m$ and $w \in \langle \chi_{mk} \rangle_{k < m}$.
  \item \label{item:R3} $[\chi_{im}, w, \chi_{mi}] = 1$, for $i < m$ and $w \in \langle \chi_{mk} \rangle_{k < m, k \neq i}$.
\end{relations}

We now show that they indeed give the presentation that we were looking for:

\begin{theo}\label{The_presentation}
The pure loop braid group up to homotopy $hP\Sigma_n$ is the quotient of $P\Sigma_n$ by relations \textbf{(\ref{item:R1})}, \textbf{(\ref{item:R2})} and \textbf{(\ref{item:R3})}. As a consequence, it admits the presentation:
\[hP\Sigma_n \cong \left\langle \chi_{ij}\ (i \neq j)\ \middle|\ \textbf{(\ref{item:R0})}, \textbf{(\ref{item:R1})}, \textbf{(\ref{item:R2})}, \textbf{(\ref{item:R3})}\right\rangle\]
\end{theo}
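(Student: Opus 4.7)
The plan is to proceed by induction on $n$, building in the group $G_n$ defined by the presentation an analogue of the semi-direct product decomposition of Theorem \ref{dec_of_hMcCool}. First I would check that relations (R1), (R2), (R3) hold in $hP\Sigma_n$ to obtain a well-defined surjection $\pi_n : G_n \twoheadrightarrow hP\Sigma_n$: (R1) is the reduced free group condition on $\langle \chi_{mk}\rangle_{k<m} = t_m(RF_{m-1})$ coming from the injection $t_m$ of Lemma \ref{decomposition_lemma_2}; (R2) follows from the abelianness of $\mathcal K_m'$ in Lemma \ref{decomposition_lemma_2}, since both $\chi_{im}$ and $\chi_{jm}$ lie there; (R3) expresses the $\chi_{mi}$-equivariance of $c_i$, namely that $\chi_{mi} = t_m(x_i)$ acts trivially on the factor $\mathcal N(x_m)/x_i$. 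The goal is to show $\pi_n$ is injective.

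Assuming the statement for $n-1$, the projection $G_n \twoheadrightarrow G_{n-1}$ killing the generators $\chi_{in}, \chi_{ni}$ ($i<n$) is well-defined, since each relation involving such a generator either becomes trivial or reduces to a relation in $G_{n-1}$, and is split by inclusion of generators with kernel $\widetilde{\mathcal K}_n$. Lemma \ref{decomposition_lemma_1} (which uses only the McCool relations) then shows $\widetilde{\mathcal K}_n$ is generated by the $\chi_{in}, \chi_{ni}$ for $i<n$. The subgroup $\widetilde{RF}_{n-1} := \langle \chi_{nk}\rangle_{k<n}$ is a quotient of the free $F_{n-1}$ (its preimage in $P\Sigma_n$) by the reduced free relations imposed by (R1) with $m=n$; comparing with its image in $hP\Sigma_n$ via $\pi_n$ forces $\widetilde{RF}_{n-1} \cong RF_{n-1}$.

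Let $\widetilde{\mathcal K}_n'$ be the normal closure of the $\chi_{in}$ in $\widetilde{\mathcal K}_n$. The McCool relation $[\chi_{in}, \chi_{jn}] = 1$ shows that conjugation by $\chi_{jn}$ fixes $\chi_{in}$, so $\widetilde{\mathcal K}_n'$ is generated by the conjugates $\chi_{in}^v$ for $v \in \widetilde{RF}_{n-1}$ and $i<n$. Writing $\chi_{in}^v = [v^{-1},\chi_{in}]\cdot\chi_{in}$ and applying the commutator identity $[a, bc] = [a,b]\cdot [a,c]^b$, (R2) together with McCool yields $[\chi_{jn}, \chi_{in}^v] = 1$ for all $i, j, v$; conjugating by $u \in \widetilde{RF}_{n-1}$ gives $[\chi_{jn}^u, \chi_{in}^v] = 1$ in full generality, so $\widetilde{\mathcal K}_n'$ is abelian. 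The retraction $\widetilde{\mathcal K}_n \twoheadrightarrow \widetilde{RF}_{n-1}$ sending $\chi_{in} \mapsto 1$ is then well-defined (the remaining relations reduce to relations in $\widetilde{RF}_{n-1}\cong RF_{n-1}$), giving $\widetilde{\mathcal K}_n \cong \widetilde{\mathcal K}_n' \rtimes \widetilde{RF}_{n-1}$.

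The main obstacle is the final identification $\widetilde{\mathcal K}_n' \cong \prod_{i<n} \mathcal N(x_n)/x_i$. Translated in the abelian $\widetilde{\mathcal K}_n'$, (R3) says that conjugation by $\chi_{ni}$ fixes $\chi_{in}^v\chi_{in}^{-1}$ for $v \in \langle \chi_{nk}\rangle_{k\neq i}$, which is the algebraic content of the statement in Lemma \ref{decomposition_lemma_2} that $c_i$ factors through $\mathcal N(x_n)/x_i$. This produces maps $\mathcal N(x_n)/x_i \to \widetilde{\mathcal K}_n'$ assembling into a surjection $\prod_i \mathcal N(x_n)/x_i \twoheadrightarrow \widetilde{\mathcal K}_n'$; composing with $\pi_n$ gives back the isomorphism of Lemma \ref{decomposition_lemma_2}, so both maps must be isomorphisms. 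The delicate verification, where I expect the most work, is showing that (R2) and (R3) indeed encode \emph{all} relations in $\widetilde{\mathcal K}_n'$, ruling out hidden torsion or collapses. This should follow by exhibiting the Lyndon-monomial generating set of Lemma \ref{basis_of_Kn'} inside $\widetilde{\mathcal K}_n'$ and invoking the free abelianness of $\mathcal N(x_n)/x_i$ given by Corollary \ref{Basis_of_N(x_i)} to match ranks via $\pi_n$.
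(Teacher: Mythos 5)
Your proposal follows essentially the same route as the paper: induction on $n$ through a decomposition of the presented group $\mathcal G_n$ mirroring Theorem \ref{dec_of_hMcCool}, with \textbf{(\ref{item:R1})} giving the reduced free factor, \textbf{(\ref{item:R2})} the abelianness of $\tilde{\mathcal K}_n'$, \textbf{(\ref{item:R3})} killing the monomials containing $\chi_{ni}$, and the final identification obtained by matching the Lyndon-monomial generators of Lemma \ref{basis_of_Kn'} against the basis of $\mathcal K_n'$ \emph{via} $\pi$. The one ingredient you leave open in the ``delicate'' last step is supplied in the paper as follows: relations \textbf{(\ref{item:R1})} and \textbf{(\ref{item:R2})} make the subgroup $\tilde M_i = \langle \chi_{in}, \chi_{n1}, \dots, \chi_{n,n-1}\rangle$ a quotient of $RF_n$, hence nilpotent, so Proposition \ref{reduced_normal_closure} applies and shows that the normal closure $\tilde N_i$ of $\chi_{in}$ in $\tilde M_i$ is \emph{generated} by the Lyndon monomials in question; without this nilpotence argument you cannot assert that your generating set exhausts $\tilde{\mathcal K}_n'$, and the sandwich argument with $\pi_n$ does not get off the ground.
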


\begin{proof}
Let $\mathcal G_n$ be the group defined by the presentation of the theorem. The $\chi_{ij}$ in $hP\Sigma_n$ satisfy the above relations. As a consequence,  there is and obvious morphism $\pi$ from $\mathcal G_n$ to $hP\Sigma_n$. Since the $\chi_{ij}$ generate $hP\Sigma_n$ (Cor. \ref{generators_of_hMcCool}), this morphism is surjective. We need to show that it is an isomorphism. We will do that by showing that $\mathcal G_n$ admits a decomposition similar to that of $hP\Sigma_n$, and that the pieces in the two decompositions are isomorphic \emph{via} $\pi$. We do this in three steps, parallel to the proof of Theorem \ref{dec_of_hMcCool}.

\textbf{Step 1.} We define a projection $\tilde p_n$ from $\mathcal G_n$ to $\mathcal G_{n-1}$ by sending $\chi_{ij}$ to $\chi_{ij}$ if $n \notin \{i,j\}$, and $\chi_{in}$ and $\chi_{nj}$ to $1$. This morphism is well defined (from the presentations), and so is its obvious section $\tilde s_n: \mathcal G_{n-1} \hookrightarrow \mathcal G_n$. If we denote by $\tilde{\mathcal K}_n$ the kernel of $\tilde p_n$, we get a semi-direct product decomposition $\mathcal G_n = \tilde{\mathcal K}_n \rtimes \mathcal G_{n-1}$ that fits in the following diagram:
\[\begin{tikzcd}
\tilde{\mathcal K}_n \ar[r, hook] \ar[d, dashed] 
&\mathcal G_n \ar[r, two heads, swap, "\tilde p_n"] \ar[d, two heads, "\pi"] 
&\mathcal G_{n-1} \ar[d, two heads, "\pi"] \ar[l, bend right, swap, "\tilde s_n"]\\
\mathcal K_n \ar[r, hook]
&hP\Sigma_n \ar[r, two heads, swap, "p_n"]
&hP\Sigma_{n-1} \ar[l, bend right, swap, "s_n"]
  \end{tikzcd}\]
By induction (using the five-lemma), beginning with the isomorphism $\mathcal G_2 \cong hP\Sigma_2 \cong \Z^2$ (which is the group $\langle \chi_{12}, \chi_{21} \rangle$ of inner automorphisms of $RF_2$), we only need to show that the induced morphism between the kernels are isomorphisms.

\textbf{Step 2.} We can apply Lemma \ref{generating_the_kernel} to the above decomposition of $\mathcal G_n$; the proof of Lemma \ref{decomposition_lemma_1} only used the McCool relations, so it carries over without change to show that $\tilde{\mathcal K}_n$ is generated by the $\chi_{in}$ together with the $\chi_{nj}$. This shows directly that the map from $\tilde{\mathcal K}_n$ to $\mathcal K_n$ is surjective (this fact also comes from the snake lemma and the induction hypothesis). Consider the map $\tilde{\mathcal K}_n \rightarrow \mathcal K_n \twoheadrightarrow RF_n$, where the second map is the projection $q_n$ from $\mathcal K_n$ to $RF_{n-1}$ defined in the proof of Theorem \ref{dec_of_hMcCool}. This map sends the $\chi_{in}$ to $1$ and the $\chi_{nj}$ to the $x_j$. From the relations (\ref{item:R1}), we know that the assignment $x_j \mapsto \chi_{nj}$ defines a section $\tilde t_n$ from $RF_{n-1}$ to $\tilde{\mathcal K}_n$. This shows that the $\chi_{nj}$ generate a reduced free group inside $\tilde{\mathcal K}_n$. If we denote by  $\tilde{\mathcal K}_n'$ the kernel of $\tilde q_n = q_n \circ \pi$,  we get
a semi-direct product decomposition $\tilde{\mathcal K}_n = \tilde{\mathcal K}_n' \rtimes RF_{n-1}$, similar to \eqref{dec2}, that fits in the following diagram:
\[\begin{tikzcd}
\tilde{\mathcal K}_n' \ar[r, hook] \ar[d, dashed] 
&\tilde{\mathcal K}_n \ar[r, two heads, swap, "\tilde q_n"] \ar[d, two heads, "\pi"] 
&RF_{n-1} \ar[d, "\cong"] \ar[l, bend right, swap, "\tilde t_n"]\\
\mathcal K_n' \ar[r, hook]
&\mathcal K_n \ar[r, two heads, swap, "q_n"]
&RF_{n-1} \ar[l, bend right, swap, "t_n"]
  \end{tikzcd}\]

\textbf{Step 3.} In order to show that the induced projection $\pi: \tilde{\mathcal K}_n' \rightarrow \mathcal K_n'$ is an isomorphism, we need to investigate the structure of $\tilde{\mathcal K}_n'$. By Lemma \ref{generating_the_kernel}, it is generated by the $\chi_{in}^w$ for $w \in \langle \chi_{nj} \rangle \cong RF_{n-1}$, and the relations (\ref{item:R2}) say exactly that these commute with each other. Thus $\tilde{\mathcal K}_n’$ is abelian. 
Let us fix $i$ and denote by $\tilde N_i$ the subgroup generated by the $\chi_{in}^w$. It is the normal closure of $\chi_{in}$ in the subgroup $\tilde M_i$ generated by $\chi_{in}$ and the $\chi_{nj}$. Relations (\ref{item:R1}) and (\ref{item:R2}) imply that $\chi_{in}$ and the $\chi_{nj}$ commute with their conjugates in $M_i$, which is thus a quotient of $RF_n$. In particular, $\tilde M_i$ is nilpotent, and we can apply Proposition \ref{reduced_normal_closure} to get that $\tilde N_i$ is generated by Lyndon monomials in $\chi_{in}$ and the $\chi_{nj}$ containing $\chi_{in}$. We can even limit ourselves to the subset $\Lynd(\chi_{in}; (\chi_{nj})_j)$ of monomials without repetitions, the other ones being trivial by the argument above. Furthermore, the relations (\ref{item:R3}) say exactly that among these, the ones containing $\chi_{ni}$ vanish. Thus, the abelian group $\tilde N_i$ is generated by $\Lynd(\chi_{in}; \chi_{n1},..., \widehat{\chi_{ni}}, ...,\chi_{n,n-1})$. Because of Lemma \ref{basis_of_Kn'}, we know that these monomials are sent to linearly independant elements in $N_i$ (in fact, to a basis of this abelian group), so they must be a basis of $\tilde N_i$, and the projection $\pi$ induced a isomorphism between $\tilde N_i$ and $N_i$. The projection $\pi: \tilde{\mathcal K}_n' \rightarrow \mathcal K_n'$, being the direct product of these isomorphisms, is thus an isomorphism, which is the desired conclusion.
\end{proof}

\begin{rmq}
The same remarks made at the end of \cref{par_A[Y]} for $RF_n$ holds true for $hP\Sigma_n$: it is finitely generated and nilpotent (of class $n-1$), so it has a finite presentation. However, in order to write down such a finite presentation, we need a presentation of the free $(n-1)$-nilpotent group on $n^2$ generators $\chi_{ij}$. We can then add to such a presentation the relations relations similar to (\ref{item:R1}), (\ref{item:R2}) and (\ref{item:R3}) that are iterated brackets of the generators (of any shape) of length at most $n-1$ to get an explicit finite presentation of $hP\Sigma_n$. In other words, the latter relations give a finite presentation of $hP\Sigma_n$ \emph{as a $(n-1)$-nilpotent group}.
\end{rmq}

\clearpage

\subsection{A presentation of the associated Lie ring}\label{par_pstation_Lie}

Using the above methods, one can also find a presentation of the Lie ring associated to $hP\Sigma_n$, similar to the presentation of $\Lie(hP_n)$ given in Corollary \ref{hDrinfeld-Kohno}. 

\begin{prop}
The Lie ring of $hP\Sigma_n$ is generated by $x_{ij}\ (1 \leq i \neq j \leq n)$, under the relations:
\[\begin{cases}
[x_{ik} + x_{jk}, x_{ij}] = 0 &\text{for}\ i,j, k\ \text{pairwise distinct,}\\ 
[x_{ik}, x_{jk}] = 0 &\text{for}\ i,j, k\ \text{pairwise distinct,}\\
[x_{ij}, x_{kl}] = 0 &\text{if}\ \{i,j\} \cap \{k,l\} = \varnothing,
\end{cases}\]
to which are added, for each $m$, the following families of relations:
\[\begin{cases}
[x_{im}, [x_{mi}, t]] = 0, \\ 
[x_{im}, [x_{jm}, t]] = 0, \\
[x_{im}, [x_{mi},t]] = 0, 
\end{cases}\]
where, in each case, $t$ describes Lie monomials in the $x_{mk}\ (k<m)$.
\end{prop}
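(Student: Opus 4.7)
The plan is to run the same three-step argument used in the proof of Theorem \ref{The_presentation}, but now at the level of Lie rings, producing a semi-direct product decomposition on the presented ring that mirrors the one for $\Lie(hP\Sigma_n)$ obtained in Theorem \ref{L(wPn)}. Denote by $\mathfrak g_n$ the Lie ring defined by the presentation in the statement. Since the $\bar\chi_{ij}\in\Lie(hP\Sigma_n)$ satisfy the infinitesimal McCool relations (they hold in $\Lie(P\Sigma_n)$, which surjects onto $\Lie(hP\Sigma_n)$ by Corollary \ref{generators_of_hMcCool}) and also the three new families, the assignment $x_{ij}\mapsto\bar\chi_{ij}$ defines a surjective Lie ring morphism $\pi\colon \mathfrak g_n \twoheadrightarrow \Lie(hP\Sigma_n)$. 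It is enough to prove that $\pi$ is injective.

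Step 1. Define $\tilde p_n\colon \mathfrak g_n \to \mathfrak g_{n-1}$ on generators by $x_{ij}\mapsto x_{ij}$ if $n\notin\{i,j\}$, and $x_{in},x_{nj}\mapsto 0$: each defining relation of $\mathfrak g_n$ maps to a relation of $\mathfrak g_{n-1}$ (or trivially to $0$), so $\tilde p_n$ is well defined, with obvious section $\tilde s_n$. Let $\tilde{\mathfrak K}_n:=\ker(\tilde p_n)$, so that $\mathfrak g_n\cong \tilde{\mathfrak K}_n\rtimes \mathfrak g_{n-1}$. Assembling this into a commuting ladder over the almost direct product $\Lie(hP\Sigma_n)\cong \Lie(\mathcal K_n)\rtimes \Lie(hP\Sigma_{n-1})$ and applying the five-lemma inductively from $n=2$ (where $\mathfrak g_2\cong \Z^2\cong \Lie(hP\Sigma_2)$ trivially), one reduces the problem to showing that the induced map $\pi\colon \tilde{\mathfrak K}_n\to \Lie(\mathcal K_n)$ is an isomorphism.

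Step 2. The Lie-ring analogue of Lemma \ref{generating_the_kernel} (push $\mathfrak g_{n-1}$-summands to the right using the semi-direct bracket) together with the argument of Lemma \ref{decomposition_lemma_1}, which uses only McCool relations, shows that $\tilde{\mathfrak K}_n$ is generated as a Lie ideal by the $x_{in}$ and the $x_{nj}$. The first new family of relations says exactly that the subring generated by the $x_{nk}$, $k<n$, satisfies the reduced-freeness identities of $R\mathfrak L_{n-1}$, so $y_k\mapsto x_{nk}$ defines a morphism $R\mathfrak L_{n-1}\to \tilde{\mathfrak K}_n$ that splits the composition with $\pi$ followed by the projection $q_n$ of Step 2 in the proof of Theorem \ref{dec_of_hMcCool} (passed to Lie rings via Theorem \ref{L(wPn)}). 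This gives a compatible decomposition $\tilde{\mathfrak K}_n\cong \tilde{\mathfrak K}_n'\rtimes R\mathfrak L_{n-1}$ mapping onto $\Lie(\mathcal K_n)\cong \Lie(\mathcal K_n')\rtimes R\mathfrak L_{n-1}$, and it suffices to check that the induced map $\pi\colon \tilde{\mathfrak K}_n' \to \Lie(\mathcal K_n')$ is an isomorphism.

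Step 3. By Lemma \ref{generating_the_kernel} (Lie version), $\tilde{\mathfrak K}_n'$ is generated by iterated brackets $[x_{in},[\ldots,[x_{mk_1},\ldots,x_{mk_s}]\ldots]]$. Relations (R2) force $\tilde{\mathfrak K}_n'$ to be abelian, so for each fixed $i$ the corresponding Lie ideal $\tilde N_i$ is generated by brackets of $x_{in}$ with elements of $R\mathfrak L_{n-1}$. The (Lie version of) Proposition \ref{reduced_normal_closure} shows that the ideal of $R\mathfrak L_{n-1}[x_{in}]$ generated by $x_{in}$ is spanned by Lyndon monomials without repetition containing $x_{in}$, i.e.\ by $\Lynd(x_{in};x_{n1},\ldots,x_{n,n-1})$; the relations (R3) then kill precisely those containing $x_{ni}$. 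Thus $\tilde N_i$ is spanned by $\Lynd(x_{in};x_{n1},\ldots,\widehat{x_{ni}},\ldots,x_{n,n-1})$, which by Lemma \ref{basis_of_Kn'} (or equivalently, the identification $\Lie(\mathcal K_n')\cong\prod_i\langle y_i\rangle$ from Theorem \ref{L(wPn)}) is sent by $\pi$ to a $\Z$-basis of the $i$-th factor $\langle y_i\rangle\subset R\mathfrak L_{n-1}$. Linear independence is thereby forced upstairs, so $\pi\colon \tilde N_i\xrightarrow{\cong}\langle y_i\rangle$, and since $\tilde{\mathfrak K}_n'$ is the abelian sum of the $\tilde N_i$ and maps onto the direct sum, we conclude $\tilde{\mathfrak K}_n'\cong \prod_i\langle y_i\rangle$. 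This closes the induction.

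The main obstacle is the bookkeeping in Step 3: one must keep track simultaneously of the action of $R\mathfrak L_{n-1}$ on the $x_{in}$ inside $\tilde{\mathfrak K}_n$, the reduction modulo the three families of relations, and the matching of the resulting Lyndon-type generating family with the explicit basis of $\langle y_i\rangle\subset R\mathfrak L_{n-1}$ arising from the combinatorial appendix. Verifying that the Lie analogue of Proposition \ref{reduced_normal_closure} applies without the (group-theoretic) nilpotence hypothesis—replaced here by the intrinsic grading of $\mathfrak g_n$ and the fact that $\langle y_i\rangle$ has an explicit Lyndon basis by Proposition \ref{rk_of_RL}—is the only nontrivial adaptation needed.
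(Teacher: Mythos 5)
Your proposal is correct and follows essentially the same route as the paper's own (outlined) proof: build the surjection $\pi$ from the presented Lie ring, mirror the semi-direct product decomposition of Theorem \ref{L(wPn)} step by step using the Lie analogues of Lemmas \ref{generating_the_kernel} and \ref{decomposition_lemma_1}, reduce via the five-lemma to the innermost factor, and match the Lyndon-monomial generating family (cut down by the last two families of relations) against the known basis of $\prod_i \langle y_i \rangle$. Your remark that the Lie analogue of Proposition \ref{reduced_normal_closure} holds without a nilpotence hypothesis, since the ideal $\langle y_i\rangle$ already has an explicit Lyndon basis, is exactly the simplification the Lie setting affords.
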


\begin{proof}
Since it is very similar to the proof of Theorem \ref{The_presentation}, we only outline the proof. Let $h \mathfrak p_n$ be the Lie ring defined by the presentation of the theorem. The relations are true for the classes of the $\chi_{ij}$ in $\Lie(hP\Sigma_n)$ (as direct consequences of the relations in the group $hP\Sigma_n$), so that $x_{ij} \mapsto \overline \chi_{ij}$ defines a projection $\pi$ from $h \mathfrak p_n$ onto $\Lie(hP\Sigma_n)$. One shows that $h \mathfrak p_n$ admits a decomposition similar to the decomposition of $\Lie(hP\Sigma_n)$ described in Theorem \ref{L(wPn)}. Indeed, the morphism from $h \mathfrak p_n$ to $h \mathfrak p_{n-1}$ sending $x_{ij}$ on $x_{ij}$ if $n \notin \{i,j\}$ and to $0$ else is a well-defined projection $p$, which is split. From the relations, reasoning as in the proof of Lemma \ref{decomposition_lemma_1}, one checks that the $x_{in}$ together with the $x_{ni}$ generate an ideal of $h \mathfrak p_n$, which has to be the kernel $\mathfrak k_n$ of $p$. They one argues exactly as in the proof of Theorem \ref{The_presentation} to show (using the first family of relations) that $\mathfrak k_n$ decomposes as a semi-direct product $\mathfrak k_n' \rtimes R \mathcal L_{n-1}$. Moreover, the projection $\pi$ is compatible with the decompositions of $h \mathfrak p_n$ and $\Lie(hP\Sigma_n)$. Using the five lemma, we see that we only have to check that $\pi$ induces and isomorphism between $\mathfrak k_n'$ and $\prod \langle y_i \rangle$. Since we know a basis of the target, whose elements are Lie monomials on the $\overline \chi_{in}$ and $\overline \chi_{ni}$, we are left with showing that the corresponding Lie monomials on the $x_{in}$ and $x_{ni}$ generate $\mathfrak k_n'$. Like in the proof of Theorem \ref{The_presentation}, the last two families of relations ensure exactly that, so that $\pi$ is indeed an isomorphism.
\end{proof}

\begin{rmq}
In the presentation, one can consider only the relations where $t$ is a linear monomial of length at most $m$.
\end{rmq}

\begin{rmq}
It is a difficult open question, very much related to the Andreadakis problem for $P\Sigma_n$, to decide whether the first three relations (the linearize McCool relations) define a presentation of the Lie ring of $P\Sigma_n$. It is only known to hold rationally \cite{Berceanu}. 
\end{rmq}

\clearpage

\begin{subappendices}

\section{Lyndon words and the free Lie algebra}

For the comfort of the reader, we gather here some basic facts about Lyndon words. These describe a basis of the free Lie algebra, and we give a self-contained proof of this classical result involving as little machinery as possible. Our main sources for this appendix were Serre's Lecture Notes \cite{Serre} and Reutenauer's book \cite[5.1]{Reutenauer}.

\subsection{Lyndon words}

Let $\mathcal A$ be a set (called an \emph{alphabet}) endowed with a fixed total order. We denote by $\mathcal A^*$ the free monoid generated by $\mathcal A$. Elements of $\mathcal A^*$ are \emph{words} in 
$\mathcal A$, that is, finite sequence of elements of $\mathcal A$. The set $\mathcal A^*$ is endowed with the usual dictionary order induced by the order on $\mathcal A$.

The length of a word $w$ is denoted by $|w|$. If $v$ and $w$ are words, $v$ is a \emph{suffix} (resp.\ a \emph{prefix}) of $w$ if there exists a word $u$ such that $w = uv$ (resp.\ $w = vu$). It is called \emph{proper} when it is nonempty and different from $w$.

\begin{defi}
The  \emph{standard factorisation} of a word $w$ of length at least $2$ is the factorisation $w = uv$ where $v$ is the smallest proper suffix of $w$.
\end{defi}

\begin{defi}
A \emph{Lyndon word} is a nonempty word that is minimal among its nonempty suffixes.
\end{defi}

\begin{lem}\label{lem_std_1}
If $w = uv$ is a standard factorisation, then $v$ is a Lyndon word, and if $w$ is Lyndon then so is $u$.
\end{lem}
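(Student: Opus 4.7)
The plan is to handle the two assertions separately: the first is essentially immediate from the definition of the standard factorisation, while the second requires a contradiction argument that relies in an essential way on the fact that $v$ is the \emph{smallest} proper suffix of $w$, not merely that $w$ is Lyndon.

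For the first assertion, I would simply observe that any nonempty suffix $v'$ of $v$ distinct from $v$ is a nonempty proper suffix of $w = uv$. By definition of the standard factorisation, $v \leq v'$; the lengths differ, so in fact $v < v'$ strictly. Thus $v$ is strictly smaller than each of its other nonempty suffixes, which is the Lyndon condition.

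For the second assertion, fix a nonempty proper suffix $u'$ of $u$ and argue by contradiction: assume $u' < u$ (they have different lengths, so cannot be equal). Dictionary order allows two possibilities: either (a) $u'$ and $u$ already differ within the first $|u'|$ positions, with $u'$ carrying the smaller letter at the first such position, or (b) $u'$ is a proper prefix of $u$, so that $u = u' u''$ with $u''$ nonempty. In case (a), appending $v$ on the right preserves the discrepancy, giving $u'v < uv = w$; since $u'v$ is a proper suffix of $w$, this contradicts $w$ being Lyndon.

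The main obstacle is case (b), since the Lyndon property of $w$ alone is not enough to force a contradiction there—one really needs the minimality of $v$. In this case $u''v$ is itself a nonempty proper suffix of $w$, so $v \leq u''v$ by minimality, and the lengths differ, giving $v < u''v$. Dictionary order is stable under left-concatenation by $u'$, so this yields $u'v < u' \cdot u''v = w$; once again $u'v$ is a proper suffix of $w$, contradicting the Lyndon property. Both cases being impossible, we conclude $u < u'$, and $u$ is Lyndon.
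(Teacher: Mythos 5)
Your proof is correct and follows essentially the same route as the paper: the first assertion is read off directly from the minimality of $v$, and the second combines the Lyndon property of $w$ applied to the suffix $u'v$ with the minimality of $v$ applied to the suffix $u''v$ in the prefix case. The only difference is cosmetic: you run the argument as a contradiction from $u'<u$, whereas the paper argues directly from $w<u'v$, but the two ingredients and the case split (prefix vs.\ non-prefix) are identical.
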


\begin{proof}
The fact that $v$ is a Lyndon word is clear. Suppose that $w$ is a Lyndon word. Let $x$ be any proper suffix of $u$. Since $uv = w < xv$, if $x$ is not a prefix of $u$, then $u < x$. Otherwise, $u = xy$ for some nonempty $y$, but then  $xyv < xv$ implies $yv < v$, which contradicts the definition of $v$. 
\end{proof}

The following proposition is the most basic result in the theory of Lyndon words:

\begin{prop}\label{word_factorization}
Every word $w \in \mathcal A^*$ factorizes uniquely as a product $l_1 \cdots l_n$ where $n$ is an integer, the $l_i$ are Lyndon words and $l_1 \geq l_2 \geq ...\geq l_n$. We call this the \emph{Lyndon factorization} of $w$.
\end{prop}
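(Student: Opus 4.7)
The plan is to prove existence and uniqueness separately, both hinging on a single key lemma: \emph{if $u$ and $v$ are Lyndon words with $u < v$, then $uv$ is Lyndon}. Granting this lemma, existence is immediate. Start from the trivial factorization of $w$ into individual letters (each letter being Lyndon), and repeatedly merge any adjacent pair $(l_i, l_{i+1})$ satisfying $l_i < l_{i+1}$ into the single Lyndon word $l_i l_{i+1}$. Each merge strictly reduces the number of factors, so the procedure terminates, and its output has no ascending adjacent pair left, hence is a non-increasing Lyndon factorization.

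To prove the lemma, I would verify $uv < s$ for every proper nonempty suffix $s$ of $uv$ by a case split on $|s|$. If $|s| > |v|$, then $s = u'v$ for some proper nonempty suffix $u'$ of $u$; Lyndonicity gives $u < u'$, and since $|u'| < |u|$ forbids $u$ from being a prefix of $u'$, the inequality $u < u'$ is witnessed by a differing letter within the first $|u'|$ positions, which transfers verbatim to $uv < u'v = s$. If $|s| \leq |v|$, then $s$ is a suffix of $v$ and $s \geq v$ by Lyndonicity of $v$, so it suffices to establish $uv < v$. The delicate sub-case is when $u$ is a proper prefix of $v$: writing $v = uv''$, one reduces to comparing the tails $uv''$ and $v''$, and the Lyndon property of $v$ supplies $v'' > v = uv''$ directly. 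The remaining sub-cases of $uv < v$ reduce to a straightforward differing-letter comparison between $u$ and $v$.

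For uniqueness, I would show that the last factor $l_n$ of any non-increasing Lyndon factorization of $w$ is the lexicographically smallest nonempty suffix of $w$. This smallest suffix is determined by $w$ alone, so $l_n$ is determined; cancellation and induction on $|w|$ then complete the proof. To see the characterization, I would take any nonempty suffix $s$ of $w$ and split into cases according to whether $s$ is a proper suffix of $l_n$, of the form $l_j \cdots l_n$ for some $j < n$, or of the form $s_j l_{j+1} \cdots l_n$ with $s_j$ a proper nonempty suffix of $l_j$. In each case one combines the non-increasing property $l_j \geq l_n$ (or $s_j > l_j \geq l_n$, via Lyndonicity of $l_j$) with a comparison against $l_n$: either the comparison is decided inside the common length — in which case $s > l_n$ transparently — or $l_n$ is a proper prefix of $l_j$ (resp.\ $s_j$), in which case $s$ begins with $l_n$ followed by further letters and is therefore strictly larger. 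The opposite prefix scenario ($l_j$ or $s_j$ a proper prefix of $l_n$) is ruled out by the strict inequalities $l_j \geq l_n$ and $s_j > l_n$.

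The main obstacle throughout is the bookkeeping of the prefix case in lexicographic comparisons, where an inequality $x < y$ is not witnessed by a differing letter in the common range and one must appeal to the Lyndon property one level deeper (as in $v'' > uv''$ within the lemma, and in the structural comparisons between $l_j$ and $l_n$ for uniqueness). Once these subtleties are handled systematically, the remainder is a routine induction combined with elementary case analysis on lex-order.
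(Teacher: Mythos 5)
Your proof is correct, and the uniqueness half is essentially the paper's argument: both characterize the last factor $l_n$ of any non-increasing Lyndon factorization as the lexicographically smallest nonempty suffix of $w$ (the paper compresses your three-case analysis into the single chain $s \geq s_j \geq l_j \geq l_n$, using that a word is always $\geq$ its prefixes and that a nonempty suffix of a Lyndon word is $\geq$ that word). The existence half, however, is genuinely different. The paper peels off the smallest nonempty suffix $l_n$, verifies that no suffix of the remaining prefix $w'$ can be smaller than $l_n$, and inducts on $|w|$ --- no concatenation lemma is needed. You instead start from the factorization into single letters and repeatedly merge adjacent ascending pairs, which requires the key lemma that $u < v$ Lyndon implies $uv$ Lyndon; this is precisely the first assertion of Lemma~\ref{lem_std} in the paper (stated and proved there \emph{after} the factorization proposition, for use in the standard-factorization analysis), and your proof of it, including the delicate sub-case where $u$ is a proper prefix of $v$ and one falls back on $v'' > v = uv''$, matches the paper's. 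Your route buys a constructive, algorithmic existence proof (the merging procedure) at the cost of front-loading the concatenation lemma; the paper's route is shorter and self-contained but less explicit about how the factorization is built. Since you re-prove the lemma inside your argument there is no circularity, and I see no gap.
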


\begin{proof}
We first prove unicity, by proving that in a factorization $w = l_1 \cdots l_n$ into a non-increasing product of Lyndon words, $l_n$ is the minimum nonempty suffix of $w$. 
Indeed, let $v$ be a suffix of $w$. Decompose $v$ as $yl_{k+1} \cdots l_n$ where $y$ is a nonempty suffix of $l_k$ (possibly equal to $l_k$). Then $v \geq y \geq l_k \geq l_n$.

We show existence by induction on the length  of $w$: take $l_n$ to be the smallest nonempty suffix of $w$. Then $w = w'l_n$, and $l_n$ is a Lyndon word. Moreover, a nonempty suffix of $w'$ cannot be strictly smaller than $l_n$. Indeed, if $y$ is a nonempty suffix of $w'$ such that $y < l_n$, then either $y$ is a (proper) prefix of $l_n$ or $yl_n < l_n$. The second case contradicts the definition of $l_n$. In the first case, by definition of $l_n$, we get $yl_n > l_n = yu$, whence $l_n > u$. Thus both cases contradict the definition of $l_n$: we must have $y \geq l_n$.
As a consequence, a factorisation of $w'$ satisfying the conditions of the proposition gives such a factorisation for $w$, whence the conclusion.
\end{proof}

Proposition \ref{word_factorization}  allows us to identify the abelian group $\Z\mathcal A^*$ with the symmetric algebra $S^*_{\Z}(\Lynd)$. Note that this linear identification does not preserve the ring structure, since the Lyndon factorization of a product $uv$ need not be the product of the Lyndon factorization of $u$ with that of $v$.

\subsection{The Lyndon basis of the free Lie algebra}\label{par_Lyndon_basis}

In the sequel, $V = \Z \{\mathcal A\}$ is the free abelian group generated by the alphabet $\mathcal A$. We denote by $\mathfrak LV$ the free Lie algebra on $V$ and by $TV$ the free associative algebra on $V$. Recall that their universal properties imply that $TV$ is the enveloping algebra of $\mathfrak LV$. We denote by $\iota: \mathfrak LV \rightarrow TV$ the canonical Lie morphism between them. Remark that we do not know \emph{a priori} that this map is injective (we do not assume the PBW theorem to be known).

Define an application $w \mapsto P_w$ from the set $\Lynd$ of Lyndon word on $\mathcal A$ to $\mathfrak LV$ as follows:
\begin{itemize}[topsep =0.1em, itemsep=-0.4em]
\item Take $P_a:= a \in V$ for any letter $a \in \mathcal A$;
\item If $w$ is a Lyndon word, consider its standard factorisation $w = uv$ and define $P_w$ to be $[P_u, P_v] \in \mathfrak LV$;
\end{itemize}

\begin{lem}[Standard factorization of a product of Lyndon words]\label{lem_std}
Let $u$ and $v$ be Lyndon words. Then $uv$ is a Lyndon word if and only if $u <v$. Moreover, suppose that $u < v$, and denote by $u=xy$ be the standard factorization of $u$, if $u$ is not a letter. Then the standard factorization of $uv$ is $u \cdot v$ if $u$ is a letter or $v \leq y$, and $x \cdot yv$ if $v > y$.
\end{lem}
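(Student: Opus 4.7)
The plan is to verify the Lyndon property by inspecting suffixes in each case. For the ``if and only if" assertion, note that the proper nonempty suffixes of $uv$ split into three families: (i) suffixes of the form $u'v$ with $u'$ a nonempty proper suffix of $u$, (ii) proper nonempty suffixes $v'$ of $v$, and (iii) $v$ itself. For (i), the Lyndon property of $u$ gives $u < u'$ with $|u'| < |u|$, so $u$ is not a prefix of $u'$; comparing at the first position of difference yields $uv < u'v$. For (ii), the Lyndon property of $v$ gives $v \leq v'$, so this reduces to comparing $uv$ with $v$. The technical point, common to both directions, is the equivalence: for Lyndon $u$ and $v$, one has $uv < v$ if and only if $u < v$. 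When $u$ is not a prefix of $v$, this is immediate from the first-difference argument; when $u$ is a proper prefix of $v$, so $v = uw$ with $w$ nonempty, the Lyndon-ness of $v$ forces $v < w$, and a short recursion on $|v|$ completes the argument. The converse of the ``iff" follows by applying the same comparison to the fact that $uv$ Lyndon forces $uv < v$.

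Once $uv$ is known to be Lyndon under the assumption $u < v$, the standard factorization $uv = \alpha\beta$ is determined by finding the smallest proper nonempty suffix $\beta$ of $uv$. Nonempty suffixes of $v$ are all $\geq v$ by the Lyndon property of $v$, so $\beta$ is either $v$ or has the form $u''v$ with $u''$ a nonempty proper suffix of $u$. If $u$ is a letter, no such $u''$ exists and $\beta = v$, giving the standard factorization $u \cdot v$ as claimed.

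Now suppose $u$ is not a letter and $u = xy$ is its standard factorization, so $y$ is the minimum among the nonempty proper suffixes of $u$; any other such suffix $u''$ satisfies $u'' > y$. If $v \leq y$, I claim $\beta = v$. Indeed, for each proper nonempty suffix $u''$ of $u$, we have $u'' \geq y \geq v$, and a first-difference argument (together with the observation that if $v$ is a prefix of $u''$ then $u''v$ begins with $v$ followed by a longer tail starting with $u''_1$, forcing $u''v \geq v$) yields $u''v \geq v$; thus $\beta = v$ and the factorization is $u \cdot v$. If instead $v > y$, I claim $\beta = yv$. First, $y$ and $v$ are both Lyndon with $y < v$, so the first part of the lemma applies and gives that $yv$ is Lyndon; since $v$ is a proper suffix, $yv < v$. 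For any other nonempty proper suffix $u'' \neq y$ of $u$, we have $u'' > y$, and the first position of difference between $u''$ and $y$ lies within $y$, so $u''v > yv$. Hence $\beta = yv$, and the standard factorization is $x \cdot yv$.

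The main obstacle is the meticulous handling of prefix subcases in the lex comparisons: whenever one word is a prefix of another, the tails must be compared, and one must invoke the Lyndon-ness of $u$, $v$, or $y$ at just the right moment to prevent the analysis from unravelling. Organizing these case splits crisply — and confirming in the $v \leq y$ subcase that no $u''v$ can undercut $v$ — is where the care is concentrated.
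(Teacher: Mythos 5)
Your treatment of the equivalence ($uv$ Lyndon if and only if $u<v$), and of the two cases in which the standard factorization of $uv$ is $u\cdot v$, is correct and follows essentially the same route as the paper: everything is reduced to the comparison $uv<v\Leftrightarrow u<v$, and then one checks that each proper suffix of $uv$ of the form $u''v$, with $u''$ a proper suffix of $u$, dominates the candidate smallest suffix. The care you take with the prefix subcases there is exactly what is needed.

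The last case ($v>y$), however, contains a genuine gap. You assert that for a proper suffix $u''\neq y$ of $u$ one has $u''>y$ \emph{and} that ``the first position of difference between $u''$ and $y$ lies within $y$'', from which you deduce $u''v>yv$. The quoted claim is false: $y$, being the \emph{smallest} proper suffix of $u$, can perfectly well be a \emph{proper prefix} of a longer proper suffix $u''$ of $u$; then $u''>y$ holds for prefix reasons only, and the comparison of $u''v$ with $yv$ is decided after the common prefix $y$, where it can go the wrong way. Concretely, take the alphabet $\{a<b\}$, $u=aabab$ (a Lyndon word with standard factorization $x\cdot y=aab\cdot ab$) and $v=b$, so that $v>y$. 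The proper suffix $u''=abab$ of $u$ has $y=ab$ as a proper prefix, and $u''v=ababb<abb=yv$; indeed the smallest proper suffix of $uv=aababb$ is $ababb$, so the standard factorization of $uv$ is $a\cdot ababb$ and not $x\cdot yv=aab\cdot abb$. So it is not merely that your argument breaks at this step: the assertion you are proving in this case is itself false as stated. You are in good company, since the paper's own proof makes exactly the same unjustified inference (``$y\le w$, so that $yv\le wv$''); fortunately only the equivalence and the $u\cdot v$ case of the factorization statement are used later (in the proofs of Proposition~\ref{Lyndon_words_generate} and Lemma~\ref{gr(P)=Id}), so nothing downstream is affected, but as a proof of the lemma as written this step cannot be repaired.
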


\begin{proof}
If $uv$ is a Lyndon word, then $u < uv < v$. Conversely, suppose that $u < v$. Then either $uv < v$ or $u$ is a prefix of $v$. But in this second case, $v = uw$, and $v < w$ implies that $uv < uw = v$, so in both cases $uv < v$. Now, take a proper suffix $w$ of $uv$. If $w$ is a suffix of $v$, then $w \geq v > uv$. If not, then $w = w'v$ with $w'$ a proper suffix of~$u$. Then $u < w'$ implies $uv < w'v = w$, finishing the proof that $uv$ is a Lyndon word. 

If $u$ is a letter, then $v$ is clearly the minimal proper suffix of $uv$. Suppose that $v \leq y$. Take any proper suffix $w$ of $u$. Since $y$ is the smallest one, we have $v \leq y \leq w < wv$. As a consequence, $v$ has to be the minimal proper suffix of $uv$, whence the result in this case. If $v > y$, then $yv$ is a Lyndon word by the first part of the proof. Moreover, if $w$ is a proper suffix of $u$, then $y \leq w$, so that $yv \leq wv$. Hence $yv$ is the smallest suffix of $uv$, as needed.
\end{proof}

The following proposition and its proof are adapted from \cite[Th.\ 5.3]{Serre}. The proof is arguably the most technical one in the present appendix:

\begin{prop}\label{Lyndon_words_generate}
The $P_w$ for $w \in \Lynd$ linearly generate $\mathfrak LV$. 
\end{prop}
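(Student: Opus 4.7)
The plan is to show that the submodule $L' \subseteq \mathfrak L V$ spanned by the $P_w$ ($w \in \Lynd$) is all of $\mathfrak L V$. Since $\mathfrak L V$ is generated as an abelian group by iterated Lie brackets of letters of $\mathcal A$, and each letter $a$ is itself $P_a$ for the Lyndon word $a$, an easy induction on the bracket complexity reduces the task to the following focussed statement: for every pair of Lyndon words $u, v$, the bracket $[P_u, P_v]$ lies in $L'$.

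To prove this focussed statement I would run an induction on the total length $n = |u|+|v|$, augmented by a suitable secondary well-ordering on pairs at fixed $n$. The cases $u = v$ (bracket zero) and $u > v$ (antisymmetry) leave only $u < v$, where by Lemma~\ref{lem_std} the word $uv$ is Lyndon. Its standard factorisation is either $u \cdot v$ (when $u$ is a letter, or, writing $u = xy$ for the standard factorisation of $u$, when $v \leq y$), in which case $[P_u, P_v] = P_{uv}$ holds directly by definition of the Lyndon bracketing, or else $x \cdot yv$ with $u = xy$ and $v > y$, in which case $P_{uv} = [P_x, [P_y, P_v]]$ and the Jacobi identity yields the key rearrangement
\[[P_u, P_v] \;=\; P_{uv} \;-\; [P_y, [P_x, P_v]].\]
In the second subcase, the primary hypothesis applies to $[P_x, P_v]$ (whose total length $|x|+|v|$ is strictly less than $n$) and rewrites it as a combination $\sum_i \alpha_i P_{w_i}$, turning the correction term into $\sum_i \alpha_i [P_y, P_{w_i}]$.

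The main obstacle is precisely closing this last step: the brackets $[P_y, P_{w_i}]$ have the same total length $n$ as $[P_u, P_v]$, so a length-only induction is insufficient, and one must equip the pairs at fixed $n$ with a secondary well-ordering that strictly decreases when passing from $(u,v)$ to any $(y, w_i)$. The natural candidate is the lexicographic order on the larger word of the pair (here, on $v$ from largest to smallest), which forces one to establish a triangularity statement: every Lyndon word $w_i$ occurring in the expansion of $[P_x, P_v]$ is strictly smaller than $v$ in lex order, so that each $(y, w_i)$ is strictly smaller than $(u,v)$ in the secondary order. This triangularity is the standard upper-unitriangular relation between the Lyndon basis and the monomial basis of $TV$ (essentially a statement of the form ``$P_w = w + \sum_{w' > w} \cdots$'' in $TV$, the content of the companion lemma referenced elsewhere in the excerpt), and is the essential technical input required to make the induction close.
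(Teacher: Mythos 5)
Your overall architecture --- reduce to showing that the span of the $P_w$ is closed under bracketing, induct on $|u|+|v|$ with a secondary ordering at fixed total length, split according to the standard factorisation of $uv$ via Lemma~\ref{lem_std}, and use the Jacobi identity in the hard case --- is exactly the paper's. But the way you propose to close the secondary induction has a genuine gap. You need every Lyndon word $w_i$ occurring in the expansion of $[P_x,P_v]$ to satisfy $w_i<v$, and you attribute this to the unitriangular relation $P_w=w+(\text{greater words})$ in $TV$ (Lemma~\ref{gr(P)=Id}). That lemma only controls the \emph{smallest} Lyndon word of an expansion: if $[P_x,P_v]=\sum_i\alpha_iP_{w_i}$, it tells you that $\min_i w_i$ equals the smallest monomial of $[P_x,P_v]$ in $TV$ (namely $xv$, which is indeed $<v$), but it gives no upper bound on the remaining $w_i$, since a non-minimal $w_i$ need not survive as a monomial of the element (its coefficient can be cancelled by contributions from smaller $P_{w_j}$); note also that monomials larger than $v$, such as those coming from $P_vP_x$, genuinely do appear in $[P_x,P_v]$, so no bound read off from the monomials can work. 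The paper's resolution is different and is the one to adopt: build the bound into the statement being proved, i.e.\ show by the double induction that $[P_u,P_v]$ is a combination of $P_w$ with $|w|=|u|+|v|$ \emph{and} $w<\max(u,v)$. This strengthened hypothesis is self-propagating: applied to $[P_x,P_v]$ and $[P_y,P_v]$ (shorter total length) it yields words $w,t<v$, and since $x,y<v$ it can then be applied again to $[P_w,P_y]$ and $[P_x,P_t]$, whose maxima are $<v=\max(u,v)$.

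A second, smaller slip: in the case $v>y$ you write $P_{uv}=[P_x,[P_y,P_v]]$. The standard factorisation of $uv$ is indeed $x\cdot yv$, so $P_{uv}=[P_x,P_{yv}]$, but $P_{yv}$ equals $[P_y,P_v]$ only when $y\cdot v$ is the standard factorisation of $yv$, which by Lemma~\ref{lem_std} fails whenever $y=x'y'$ with $v>y'$ (for instance $[[a,b],c]\neq P_{abc}=[a,[b,c]]$ for letters $a<b<c$). This is harmless once you give up the shortcut: treat the term $[P_x,[P_y,P_v]]$ by the same two-step application of the (strengthened) induction hypothesis as the correction term, exactly as the paper does.
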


\begin{proof}
We only need to show that the $\Z$-module generated by the $P_w$ is a Lie subalgebra. We show that if $u$ and $v$ are Lyndon words, then $[P_u,P_v]$ is a linear combination of $P_w$, with $|w| = |u| + |v|$ and $w < \max(u,v)$, by induction on $|u|+|v|$ and on $\max(u,v)$. To begin with, if $u$ and $v$ are letters, then we can suppose that $u < v$ (otherwise, use the antisymmetry relation). Then $[P_u, P_v] = P_{uv}$, and $uv <v$.

Now, take $(u,v)$ such that $|u| + |v| > 2$, and suppose that our claim is proven for every $(u',v')$ such that $|u'|+|v'| < |u|+|v|$, or $|u'|+|v'| = |u|+|v|$ and $\max(u',v') < \max(u,v)$. Using antisymmetry if needed, we can assume that $u <v$. We then use Lemma \ref{lem_std}. When $u$ is not a letter, consider the standard factorization $u = xy$ of $u$. If $u$ is a letter or $y \geq v$, then $u \cdot v$ is the standard factorization of $uv$, whence $[P_u, P_v] = P_{uv}$, and $uv < v$, proving our claim. Suppose that $y < v$. Then:
\[[P_u, P_v] = [[P_x, P_y], P_v] = [[P_x, P_v], P_y] = [P_x, [P_y, P_v]].\]
Since $|x|, |y| < |u|$, we can use the induction hypothesis to write $[P_x, P_v]$ (resp.\ $[P_y, P_v]$) as a linear combination of $P_w$ (resp.\ $P_t$) such that $|w| = |x|+|v|$ (resp.\ $|t| = |y|+|v|$), and $w < v$ (resp. $t < v$). Then, using that $x,y < v$ (since $x < xy = u < y < v$), we can apply the induction hypothesis to $[P_w, P_y]$ (resp.\ to $[P_x, P_t]$) to prove our claim, ending the proof of the proposition.
\end{proof}

The application $w \mapsto P_w$ extends to a map from  $\mathcal A^*$ to $TV$ defined as follows:
\begin{itemize}[topsep =0.1em, itemsep=-0.4em]
\item Take $P_a:= a \in V$ for any letter $a \in \mathcal A$;
\item If $w$ is a Lyndon word, consider its standard factorization $w = uv$ and define $P_w$ to be $[P_u, P_v] \in \mathfrak LV$;
\item If $w$ is any word, consider its Lyndon factorization $w = l_1 \cdots l_n$. Define $P_w$ to be  $P_{l_1} \cdots P_{l_n}  \in TV.$
\end{itemize}

The next lemma \cite[Th. 5.1]{Reutenauer} will play a key role in what follows.

\begin{lem}\label{gr(P)=Id}
For any word $w$, the polynomial $P_w$ is the sum of $w$ and a linear combination of greater words of the same length as $w$.
\end{lem}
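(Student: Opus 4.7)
The proof proceeds in two steps: first the Lyndon case by induction on length, then the general case via the Lyndon factorization. The workhorse is an elementary observation about lexicographic order: if $x$ and $x'$ are words of equal length with $x > x'$, then $xy > x'y'$ for any words $y, y'$ with $|y|=|y'|$. Indeed, the first position $i$ at which $x$ and $x'$ differ satisfies $i < |x|$ and $x[i] > x'[i]$, and this remains the first position where $xy$ and $x'y'$ differ.

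For a Lyndon word $w$ I induct on $|w|$. The case $|w|=1$ is trivial. For $|w| \geq 2$, take the standard factorization $w = uv$: Lemmas \ref{lem_std_1} and \ref{lem_std} ensure that $u$ and $v$ are Lyndon words with $u < v$ and $|u|, |v| < |w|$. By definition $P_w = P_u P_v - P_v P_u$. The induction hypothesis gives $P_u = u + \sum_{u'} \alpha_{u'} u'$ and $P_v = v + \sum_{v'} \beta_{v'} v'$, where $u'$ (resp.\ $v'$) ranges over words of the same length as $u$ (resp.\ $v$) strictly greater than it. Expanding $P_u P_v$ produces the leading term $uv$ plus terms $u''v''$ with $u'' \geq u$, $v'' \geq v$, and at least one inequality strict; the elementary observation shows each such term is strictly greater than $uv$. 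Expanding $P_v P_u$ gives terms $v''u''$ with $v'' \geq v$, $u'' \geq u$; here I invoke the classical cyclic-rotation inequality $vu > uv$ for a Lyndon word together with the elementary observation to conclude $v''u'' \geq vu > uv$. Combining, $P_w$ equals $uv$ plus a linear combination of strictly greater words of the same length.

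For a general word $w$ with Lyndon factorization $w = l_1 \cdots l_n$, $l_1 \geq \cdots \geq l_n$, the definition gives $P_w = P_{l_1} \cdots P_{l_n}$. Applying the Lyndon case to each factor and expanding, every term of $P_w$ other than $l_1 \cdots l_n = w$ has the form $l_1' \cdots l_n'$ with $l_i' \geq l_i$, $|l_i'| = |l_i|$, and at least one strict inequality. Letting $j$ be the smallest index with $l_j' > l_j$, the elementary observation applied to the equal-length blocks $l_j' \cdots l_n'$ against $l_j \cdots l_n$ (after the common prefix $l_1 \cdots l_{j-1}$) yields $l_1' \cdots l_n' > w$, which finishes the proof.

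The main non-routine step is the cyclic-rotation inequality $vu > uv$ for the standard factorization of a Lyndon word. In the generic case $u$ and $v$ first differ at a position $i < \min(|u|,|v|)$ with $u[i] < v[i]$, and this position directly witnesses $uv < vu$. When $u$ is a proper prefix of $v = uy$, one must unfold the defining property $uv < v$ (i.e.\ $uuy < uy$) at positions $\geq |u|$ to extract a letter comparison between $u$ and $y$ showing $yu > uy$, which is equivalent to $vu > uv$ after the common prefix $u$.
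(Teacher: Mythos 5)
Your proof is correct and follows essentially the same route as the paper: induction on length for Lyndon words via the standard factorization $P_w=[P_u,P_v]$, the inequality $vu>uv$, and the Lyndon factorization for general words, all driven by the same elementary observation about concatenating equal-length blocks. The only difference is cosmetic: where you prove $vu>uv$ by a case analysis on whether $u$ is a prefix of $v$, the paper gets it in one line from the chain $uv=l<v<vu$ (the Lyndon property of $l$ plus the fact that a proper prefix precedes its extensions), which you may prefer to your unfolding argument.
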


\begin{proof}
Remark that if $l$ is a Lyndon word and $l = uv$ with $u$ and $v$ nonempty, then $uv = l < v < vu$.

We use this to show the lemma for Lyndon words, by induction on their length. For letters, the result is obvious. Let $l$ be a Lyndon word, and consider its standard factorization $l=uv$. Then $u$ and $v$ are Lyndon word, and $u<v$ (Lemmas~\ref{lem_std_1} and~\ref{lem_std}). If the result is true for $u$ and $v$, then $P_l = [P_u, P_v]$ is a linear combination of elements of the form $[s,t] = st - ts$, where $|s| = |u|$, $|t| = |v|$, $s \geq u$ and $t \geq v$. Then $ts \geq vu > uv$, and $st \geq uv$, with equality if and only if $s = u$ and $t = v$. Thus the word $l = uv$ appears with coefficient $1$ in the decomposition of $P_l$, and $P_l - l$ is a linear combination of greater words, of the same length as $l$, which proves our claim.

Now, if $w$ is any word, consider its Lyndon factorization $w = l_1 \cdots l_n$. Then $P_w:= P_{l_1} \cdots P_{l_n}$  is a linear combination of $x_1 \cdots x_n$, where each $x_i$ is a word satisfying $|x_i| = |l_i|$ and $x_i \geq l_i$. As a consequence, $|x_1 \cdots x_n| = |l_1 \cdots l_n|$, and $x_1 \cdots x_n \geq l_1 \cdots l_n$, with equality if and only if each $x_i$ is equal to $l_i$. This last case only appears with coefficient $1$, so the lemma is proved.
\end{proof}

The above application extends to a linear map $P: \Z \mathcal A^* \rightarrow TV$.

\begin{prop}\label{Lyndon_words_free}
The application $P: \Z \mathcal A^* \rightarrow TV$ defined above is injective.
\end{prop}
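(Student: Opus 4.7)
The plan is to deduce injectivity of $P$ directly from the triangularity statement of Lemma \ref{gr(P)=Id}, which has just been established. First I would observe that $P$ respects the grading of $\Z\mathcal{A}^*$ and $TV$ by length (resp.\ total degree): indeed, from the construction of $P_w$ via the Lyndon factorization and the inductive definition on Lyndon words through standard factorizations, every monomial appearing in the expansion of $P_w$ on the basis of words has length exactly $|w|$. So it suffices to prove that each homogeneous component $P_d \colon \Z\mathcal{A}^*_d \to TV_d$ is injective.

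For each degree $d$, consider the total order on words of length $d$. By Lemma \ref{gr(P)=Id}, for every word $w$ of length $d$ we have
\[
P_w = w + \sum_{\substack{v > w \\ |v| = d}} \lambda_{w,v}\, v,
\]
so the family $(P_w)_{|w|=d}$ expressed in the basis of words of length $d$ is strictly upper unitriangular with respect to the total order. Such a family is automatically linearly independent over $\Z$: assume for a contradiction that $\sum_{w} \mu_w P_w = 0$ is a nontrivial finite relation with all $w$ of length $d$, and let $w_0$ be the minimum (for the total order) among the words with $\mu_{w_0} \neq 0$. Then $w_0$ cannot appear in any $P_w$ with $w > w_0$ (since all monomials of such $P_w$ are $\geq w > w_0$), and it appears with coefficient $1$ in $P_{w_0}$ itself. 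Hence the coefficient of $w_0$ in $\sum_w \mu_w P_w$ equals $\mu_{w_0}$, which must then be zero, contradicting our choice of $w_0$.

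There is no real obstacle here: the only delicate point is ensuring $P$ is homogeneous so that the argument can be run degree by degree (since $\mathcal{A}$ may be infinite, the global matrix is infinite, but each graded piece restricts to a finite support whenever we test a single relation). Both points are immediate from the definitions and from Lemma \ref{gr(P)=Id}, so the proposition follows at once. In fact, the same argument shows that $P_d$ is a bijection onto the free subgroup of $TV_d$ spanned by words of length $d$, i.e.\ that $(P_w)_{w \in \mathcal{A}^*}$ is a $\Z$-basis of $TV$; this stronger form will be useful when identifying the Lyndon monomials $P_w$ for $w \in \Lynd$ as a basis of $\mathfrak{L}V$.
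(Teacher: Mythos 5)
Your proof is correct and follows essentially the same route as the paper's: both extract injectivity from the unitriangularity of Lemma \ref{gr(P)=Id} by looking at the minimal word appearing with nonzero coefficient in an element of the kernel. (Your closing aside that the same argument gives surjectivity onto $TV_d$ is true but needs the extra termination step that the paper only supplies later, in the proof of Theorem \ref{S(Lynd)_and_TV}; it is not needed for this proposition.)
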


\begin{proof}
Let $m$ be a linear combination of words in the kernel of $P$. Suppose that $w$ is such that no word smaller that $w$ appears in $m$. Let $\lambda$ be the coefficient of $w$ in $m$. Then by Lemma \ref{gr(P)=Id}, $\lambda$ is also the coefficient of $w$ in $P_m =0$, so it must be trivial. Thus, by induction, all coefficients of $m$ have to be trivial, whence $m = 0$ and $P$ is injective.
\end{proof}

We can now sum this up as the main result of this appendix:

\begin{theo}\label{Lyndon_basis}
The map $P$ induces a graded linear isomorphism:
\[\Z \{\Lynd\} \cong \mathfrak LV.\]
Otherwise said, the family $(P_w)_{w \in \Lynd}$ is a linear basis of $\mathfrak LV$.
\end{theo}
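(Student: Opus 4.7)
The plan is to combine the two main results of this appendix, namely Proposition~\ref{Lyndon_words_generate} (spanning) and Proposition~\ref{Lyndon_words_free} (linear independence in $TV$), and simply observe that the map $P$ restricted to $\Z\{\Lynd\}$ factors through $\mathfrak LV$.

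More precisely, first I would check that $P$ is graded: an easy induction on the length of Lyndon words, using that the standard factorization $w = uv$ satisfies $|u| + |v| = |w|$, shows that $P_w \in \mathfrak L V_{|w|}$ when $w$ is Lyndon; then the extension by Lyndon factorizations gives that $P_w \in TV_{|w|}$ for any word $w$. In particular, the restriction of $P$ to $\Z\{\Lynd\}$ takes values in $\mathfrak L V \subseteq TV$ (via the canonical Lie morphism $\iota$), and is compatible with the grading.

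Next, I would conclude the proof as follows. Proposition~\ref{Lyndon_words_generate} states that the $P_w$ for $w \in \Lynd$ generate $\mathfrak LV$ linearly, so the induced map $\Z\{\Lynd\} \to \mathfrak LV$ is surjective. For injectivity, note that Proposition~\ref{Lyndon_words_free} asserts that $P \colon \Z\mathcal A^* \to TV$ is injective; restricting to the subgroup $\Z\{\Lynd\} \subseteq \Z\mathcal A^*$, the composite
\[
\Z\{\Lynd\} \longrightarrow \mathfrak LV \xrightarrow{\ \iota\ } TV
\]
is injective, hence so is the first arrow. Thus $w \mapsto P_w$ is a graded linear isomorphism from $\Z\{\Lynd\}$ to $\mathfrak LV$, which is exactly the statement of the theorem.

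There is essentially no obstacle at this stage: all of the technical work (the combinatorics of standard factorizations in Lemma~\ref{lem_std}, the triangularity result in Lemma~\ref{gr(P)=Id}, the spanning argument by induction on length and order in Proposition~\ref{Lyndon_words_generate}) has already been carried out. The only thing to be careful about is to distinguish clearly between $P$ seen as a map to $TV$ and its factorization through $\mathfrak LV$ on Lyndon words; this is what allows Proposition~\ref{Lyndon_words_free} (an $a~priori$ statement about $TV$) to yield linear independence in $\mathfrak L V$. As a pleasant byproduct, the argument shows that the canonical map $\iota \colon \mathfrak LV \to TV$ is injective on the submodule spanned by the $P_w$, which (combined with generation) gives a posteriori the injectivity of $\iota$ itself — an instance of the Poincaré–Birkhoff–Witt theorem obtained here without any separate machinery.
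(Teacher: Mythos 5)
Your proof is correct and is essentially the paper's own argument: spanning comes from Proposition~\ref{Lyndon_words_generate}, and linear independence in $\mathfrak L V$ is deduced from the injectivity of $P$ on $\Z\mathcal A^*$ (Proposition~\ref{Lyndon_words_free}) by restricting to $\Z\{\Lynd\}$ and composing with $\iota$. Your closing remark that this also yields the injectivity of $\iota$ \emph{a posteriori} is a correct and worthwhile observation, consistent with Corollary~\ref{Prim(TV)} later in the appendix.
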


\begin{proof}
The $P_w$ generate $\mathfrak LV$ (Prop.\ \ref{Lyndon_words_generate}) and, since their images in $TV$ are linearly independent (Prop.\ \ref{Lyndon_words_generate}), they must be linearly independent. 
\end{proof}

\subsection{Primitive elements and the Milnor-Moore theorem}

In proving the previous result, we have only used basic linear algebra, and the combinatorics of Lyndon words. In order to convince the reader of how powerful these techniques are, we will now recover the Milnor-Moore theorem for the algebra $TV$, using not much more machinery. The only additional tools we need are coalgebra structures and primitive elements.

The free commutative ring on the free abelian group $V$ is denoted by $S^*(V)$. It is endowed with its usual Hopf algebra structure, whose coproduct is the only algebra morphism $\Delta: S^*(V) \rightarrow S^*(V) \otimes S^*(V)$ sending each element $v$ of $V$ to $v \otimes 1 + 1 \otimes v$. That is, it is the only bialgebra structure on $S^*(V)$ such that $V$ consists of primitive elements. In fact, these are the only primitive elements in $S^*(V)$ \cite[Th.\ 5.4]{Serre}:

\begin{prop}\label{Prim_of_SV}
The set of primitive elements of $S^*(V)$ is $V$.
\end{prop}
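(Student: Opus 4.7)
The inclusion $V \subseteq \mathrm{Prim}(S^*(V))$ is built into the definition of $\Delta$, so the content is the reverse inclusion. My plan is to exploit the polynomial grading $S^*(V) = \bigoplus_{n \geq 0} S^n(V)$. Since $\Delta$ is multiplicative and sends each generator $v \in V$ to $v \otimes 1 + 1 \otimes v$, it is homogeneous of total degree zero, i.e. $\Delta(S^n V) \subseteq \bigoplus_{p+q=n} S^p V \otimes S^q V$. Consequently, if $x = \sum x_n$ is primitive, each homogeneous component $x_n$ is primitive separately, so it suffices to treat a primitive $x$ of pure degree $n$.

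The cases $n = 0$ and $n = 1$ I would dispose of directly: for $n=1$ there is nothing to prove, and for $n = 0$ we have $x \in \Z$ with $\Delta(x) = x \cdot (1 \otimes 1)$, while the primitivity identity reads $x \cdot (1 \otimes 1) = 2x \cdot (1 \otimes 1)$, forcing $x = 0$. The real work is the case $n \geq 2$, where I claim there are no nonzero primitives. Since $V$ is a free abelian group, fix a $\Z$-basis $(e_\alpha)_\alpha$ of $V$; then $S^n V$ is free on the monomials $e^{\underline k} := \prod_\alpha e_\alpha^{k_\alpha}$ with $|\underline k| = \sum_\alpha k_\alpha = n$. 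Writing $x = \sum_{\underline k} \mu_{\underline k}\, e^{\underline k}$ and using that $\Delta$ is an algebra map, one computes
\[
\Delta(e^{\underline k}) \;=\; \prod_\alpha (e_\alpha \otimes 1 + 1 \otimes e_\alpha)^{k_\alpha} \;=\; \sum_{\underline 0 \leq \underline j \leq \underline k} \binom{\underline k}{\underline j}\, e^{\underline j} \otimes e^{\underline k - \underline j}.
\]

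Primitivity forces every bidegree-$(p,q)$ component with $p, q \geq 1$ to vanish. Specializing to bidegree $(1, n-1)$, the resulting equation reads
\[
\sum_{\underline k} \mu_{\underline k} \sum_\alpha k_\alpha\, e_\alpha \otimes e^{\underline k - \underline 1_\alpha} \;=\; 0
\]
in the free $\Z$-module $V \otimes S^{n-1}V$. Extracting the coefficient of the basis element $e_\beta \otimes e^{\underline l}$ (with $|\underline l| = n-1$) gives $(l_\beta + 1)\, \mu_{\underline l + \underline 1_\beta} = 0$ in $\Z$; since $l_\beta + 1 \geq 1$, we conclude $\mu_{\underline k} = 0$ for every multi-index of total degree $n$, hence $x = 0$. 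The only subtlety, and thus the main point to be careful about, is working over $\Z$ rather than a field: this is handled by the fact that $\Z$ is torsion-free and the multiplicities $l_\beta + 1$ are positive integers, so no PBW- or characteristic-zero machinery is needed.
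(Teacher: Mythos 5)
Your proof is correct and follows essentially the same route as the paper's: both reduce, after choosing a basis of $V$, to the observation that a primitive element is an additive polynomial over $\Z$ and hence linear. The only difference is that you spell out the ``additive implies linear'' step in full (via the bidegree-$(1,n-1)$ coefficient extraction, where the positivity of the multiplicities $l_\beta+1$ does the work over $\Z$), whereas the paper invokes it as a standard fact.
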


\begin{proof}
By definition of the coproduct of $S^*(V)$, the subspace $V$ is made of primitive elements. To show the converse, it is helpful to see $S^*(V)$ as the algebra $\Z[X_i]$ of polynomial in indeterminates $X_i$. Then $S^*(V) \otimes S^*(V)$ identifies with $\Z[X_i', X_i'']$, and the coproduct sends $X_i$ to $X_i' + X_i''$. Since it is an algebra morphism, it sends a polynomial $f(X_i)$ to $f(X_i' + X_i'')$. Thus primitives elements are those $f$ such that $f(X_i' + X_i'') = f(X_i') + f(X_i'')$, i.e.\ additive ones. But since we work over $\Z$, these are only the linear ones, which is the desired conclusion.
\end{proof}

The algebra $TV$ is endowed with a Hopf structure defined exactly as the one for $S^*V$: it is the unique bialgebra structure such that elements of $V$ are primitive ones. Since primitive elements are a Lie subalgebra, they contain the Lie subalgebra generated by~$V$ (which is the image $\iota(\mathfrak LV)$ of the canonical morphism $\iota: \mathfrak LV \rightarrow TV$).

Recall that Proposition \ref{word_factorization} allows us to identify $\Z\mathcal A^*$ with the symmetric algebra $S^*_{\Z}(\Lynd)$. We will show the following:

\begin{theo}[Milnor-Moore]\label{S(Lynd)_and_TV}
The application $P: S^*_{\Z}(\Lynd) \rightarrow TV$ defined above in \cref{par_Lyndon_basis} is an isomorphism of coalgebras.
\end{theo}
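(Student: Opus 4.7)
The plan is to establish the theorem in two independent stages: that $P$ is a linear isomorphism of $\Z$-modules, and that it intertwines the two coproducts.

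For the linear isomorphism, injectivity is already Proposition~\ref{Lyndon_words_free}. For surjectivity, Lemma~\ref{gr(P)=Id} tells us that $P_w$ is equal to $w$ plus a $\Z$-linear combination of strictly greater words of the same length (necessarily built from the same letters as $w$). Working within the finite set of words of length $|w|$ using only the letters that appear in $w$, this triangularity with unit diagonal yields surjectivity by a routine descending induction on the word order. Combined with the identification $\Z \mathcal A^* \cong S^*_{\Z}(\Lynd)$ given by the uniqueness of the Lyndon factorisation (Proposition~\ref{word_factorization}), this already establishes that $P$ is a linear isomorphism.

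For the coalgebra-morphism property, the essential input is that $P_l$ is primitive in $TV$ for every Lyndon word $l$. Indeed, by its recursive definition as an iterated bracket, $P_l$ lies in the image $\iota(\mathfrak L V)$ of the free Lie algebra; the primitive elements of $TV$ form a Lie subalgebra containing $V$, hence contain all of $\iota(\mathfrak L V)$, and in particular every $P_l$. For an arbitrary word with Lyndon factorisation $w = l_1 \cdots l_n$, one has $P_w = P_{l_1} \cdots P_{l_n}$, so since $\Delta$ is an algebra morphism on $TV$,
\[
\Delta(P_w) \;=\; \prod_{i=1}^n \bigl(P_{l_i} \otimes 1 + 1 \otimes P_{l_i}\bigr).
\]
On the symmetric-algebra side, where each Lyndon word is primitive, the coproduct of $l_1 \cdots l_n$ expands as $\prod (l_i \otimes 1 + 1 \otimes l_i)$. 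Both expansions are indexed by subsets $S \subseteq \{1,\dots,n\}$, and they match term-by-term thanks to the crucial observation that a subsequence of a non-increasing sequence of Lyndon words is itself a valid Lyndon factorisation, so that applying $P$ to a sub-product $\prod_{i\in S} l_i$ recovers the ordered product $\prod_{i\in S} P_{l_i}$ in $TV$ in the correct order. Counit compatibility is immediate, since both counits vanish in positive degree and $P$ sends the empty word to $1$.

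The only delicate point is the bookkeeping of the two expansions above; everything else is formal. The central non-trivial ingredient---primitivity of $P_l$---is a direct application of the standard fact that primitive elements in any bialgebra form a Lie subalgebra, so no genuine obstacle is expected.
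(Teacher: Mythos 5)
Your proof is correct and follows essentially the same route as the paper's: injectivity from Proposition~\ref{Lyndon_words_free}, surjectivity by the triangularity of Lemma~\ref{gr(P)=Id} within the finite set of words of fixed length on a fixed letter set, and compatibility with $\Delta$ via primitivity of the $P_l$ together with the observation that sub-products of a Lyndon factorisation are again Lyndon factorisations. No substantive difference from the paper's argument.
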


\begin{proof}
Injectivity has already been shown (Prop. \ref{Lyndon_words_free}). Let us first prove surjectivity.
Let $p \neq 0$ be a homogeneous element of $TV$. Let $w$ be the smallest monomial appearing in $p$, with coefficient $\lambda$. Then $p - \lambda P_w$ is homogeneous and contains only monomials greater than $P_w$ (see Lemma \ref{gr(P)=Id}). By repeating this process, we can write $p$ as a linear combination of $P_w$. Indeed, this process stops, since we consider only the finite set of words of fixed length (equal to the degree of $p$) whose letters appear in some monomial of $p$.

We are left to show that the application $P: w \mapsto P_w$ preserves the coproduct. We first remark that if $l$ is a Lyndon word, then $l$ is primitive in $S^*_{\Z}(\Lynd)$, and $P_l \in \iota(\mathfrak LV)$ is primitive in $TV$. For any word $w$, consider its Lyndon factorization $w = l_1 \cdots l_n$. Then we can write:
\begin{align*}
\Delta(P_w) &= \Delta(P_{l_1}) \cdots  \Delta(P_{l_n}) \\ &= 
(P_{l_1}\otimes 1 + 1 \otimes P_{l_1}) \cdots  (P_{l_n}\otimes 1 + 1 \otimes P_{l_n}) \\ &=
\sum_{\underline n = X \sqcup Y} P_{l_{x_1}} \cdots P_{l_{x_p}} \otimes P_{l_{y_1}} \cdots P_{l_{y_q}},
\end{align*}
where the sum is over all partitions of the set $\underline n = \{1, ...,n\}$ into subsets $X = \{x_1 < \cdots < x_p\}$ and $Y = \{y_1 < \cdots < y_q\}$. As a consequence:
\begin{align*}
\Delta(P_w) &= \sum_{\underline n = X \sqcup Y} P_{l_{x_1} \cdots l_{x_p}} \otimes P_{l_{y_1} \cdots l_{y_q}} \\ &=
(P \otimes P)\left( \sum_{\underline n = X \sqcup Y} l_{x_1} \cdots l_{x_p} \otimes l_{y_1} \cdots l_{y_q} \right) \\ &= 
(P \otimes P)(\Delta(l_1) \cdots \Delta(l_n)) = (P \otimes P)(\Delta(w)),
\end{align*}
which ends the proof of the theorem.
\end{proof}

\begin{cor}\label{Prim(TV)}
The canonical map $\iota: \mathfrak LV \rightarrow TV$ identify $\mathfrak LV$ with the Lie algebra of primitive elements in $TV$.
\end{cor}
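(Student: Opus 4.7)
The plan is to bootstrap this from the Milnor--Moore isomorphism (Theorem \ref{S(Lynd)_and_TV}) together with the identification of primitives in a symmetric algebra (Proposition \ref{Prim_of_SV}). The statement amounts to two assertions: first, that $\iota: \mathfrak LV \to TV$ is injective, and second, that its image coincides with the set $\mathrm{Prim}(TV)$ of primitive elements.

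For injectivity, I would argue as follows. By Theorem \ref{Lyndon_basis}, the family $(P_w)_{w \in \Lynd}$ is a $\Z$-basis of $\mathfrak LV$. The image $\iota(P_w) \in TV$ is computed by the very same bracket formula used to define $P_w$ in $\mathfrak LV$, so it agrees with the element $P_w \in TV$ of \cref{par_Lyndon_basis}. By Proposition \ref{Lyndon_words_free}, these elements are linearly independent in $TV$, so $\iota$ is injective.

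For the image, one inclusion is already observed in the paragraph preceding Theorem \ref{S(Lynd)_and_TV}: since elements of $V$ are primitive and primitives form a Lie subalgebra, the Lie subalgebra $\iota(\mathfrak LV)$ generated by $V$ consists of primitive elements. For the reverse inclusion, transport the question across the coalgebra isomorphism $P: S^*_\Z(\Lynd) \xrightarrow{\sim} TV$ of Theorem \ref{S(Lynd)_and_TV}. Since $P$ respects the coproduct, it identifies $\mathrm{Prim}(TV)$ with $\mathrm{Prim}(S^*_\Z(\Lynd))$. Viewing $S^*_\Z(\Lynd)$ as the symmetric algebra on the free $\Z$-module $\Z\{\Lynd\}$ with its standard Hopf structure (which is precisely the structure at hand, since Lyndon words, as the polynomial generators, are primitive), Proposition \ref{Prim_of_SV} gives $\mathrm{Prim}(S^*_\Z(\Lynd)) = \Z\{\Lynd\}$. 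Therefore $\mathrm{Prim}(TV) = P(\Z\{\Lynd\}) = \mathrm{span}_\Z\{P_w : w \in \Lynd\}$, which by Theorem \ref{Lyndon_basis} is exactly $\iota(\mathfrak LV)$.

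The main (minor) obstacle is checking that the Hopf structure on $S^*_\Z(\Lynd)$ transported from $TV$ via $P$ really is the standard symmetric-algebra Hopf structure, i.e.\ that each Lyndon word is primitive on the $S^*_\Z(\Lynd)$ side. But this is built into the proof of Theorem \ref{S(Lynd)_and_TV}: each $P_l$ for $l$ Lyndon lies in $\iota(\mathfrak LV)$, hence is primitive in $TV$, hence its preimage $l$ is primitive in $S^*_\Z(\Lynd)$. Since the Lyndon words freely generate $S^*_\Z(\Lynd)$ as a commutative algebra, this forces the transported coproduct to coincide with the standard one, and Proposition \ref{Prim_of_SV} applies directly.
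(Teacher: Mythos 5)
Your proof is correct and follows essentially the same route as the paper: both transport the question of primitives across the coalgebra isomorphism $P: S^*_{\Z}(\Lynd) \to TV$ of Theorem \ref{S(Lynd)_and_TV}, apply Proposition \ref{Prim_of_SV} to identify the primitives of the symmetric algebra with $\Z\{\Lynd\}$, and use Theorem \ref{Lyndon_basis} to match this with $\iota(\mathfrak LV)$. Your version merely spells out the details (injectivity of $\iota$, and the check that the transported coproduct on $S^*_{\Z}(\Lynd)$ is the standard one) that the paper leaves implicit.
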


\begin{proof}
Thanks to Theorem \ref{S(Lynd)_and_TV} and Theorem \ref{Lyndon_basis}, this map identifies with $\Z \{\Lynd\} \rightarrow  S^*_{\Z}(\Lynd)$. But Proposition \ref{Prim_of_SV} ensures that the set of primitive elements of the coalgebra $S^*_{\Z}(\Lynd)$  is exactly $\Z \{\Lynd\}$, whence the result.
\end{proof}

\begin{rmq}
Neither our identification of the free abelian group $\Z \{\Lynd\}$ with the primitives of $TV$ nor our proof of Theorem \ref{S(Lynd)_and_TV} requires the use of the fact that Lyndon words generate $\mathfrak LV$ (Prop.~\ref{Lyndon_words_generate}) : we only used that they are linearly independent (Prop.~\ref{Lyndon_words_free}) for that. The full strength of Th.~\ref{Lyndon_basis} is only used to see that $P : \mathbb Z \mathcal A^* \hookrightarrow TV$ coincides with $\iota: \mathfrak LV \rightarrow TV$ (whence Cor.~\ref{Prim(TV)}).
\end{rmq}

\subsection{Linear trees}

The free Lie algebra can be seen as a quotient of the free abelian group $\Z \mathcal M(\mathcal A)$ on the free magma $\mathcal M(\mathcal A)$ on $\mathcal A$ by antisymmetry and the Jacobi identity. Elements of the free magma can be seen as parenthesized words in $\mathcal A$, or as finite rooted planar binary trees, whose leaves are indexed by elements of $\mathcal A$. The images of elements of the free magma in $\mathfrak LV$ are called \emph{Lie monomials}. 

Lyndon words encode a family of rooted binary trees whose leaves are indexed by letters. Precisely, if $w$ is a Lyndon word, the tree $T(w)$ is just one leaf indexed by $w$, if $w$ is a letter. If not, take the standard factorization $w = uv$. Then $T(w)$ is given by a root, a left son $T(u)$ and a right son $T(v)$:
\[T(uv) =
\begin{gathered} 
\begin{forest}
for tree={circle,draw}
    [[$Tu$][$Tv$]]
  \end{forest}
\end{gathered}
\]
The Lyndon basis of the free Lie algebras are the Lie monomials $P_w$ obtained from such trees by interpreting nodes as Lie brackets. We call these \emph{Lyndon monomials}

One can consider another family of Lie monomials, called \emph{linear Lie monomials}, given by linear trees, that is, monomials which are letters or of the form $[y_1, ..., y_n]$ ($= [y_1, [y_2, [...[y_{n-1}, y_n]...]]$). It is easy to see, by induction, using the Jacobi identity, that these generate $\mathfrak LV$. In fact, the Jacobi identity can be written as:
\begin{equation}\label{Jacobi_trees}
\begin{gathered}
\begin{forest}
for tree={circle,draw}
    [[[A][B]][C]]
  \end{forest}
\end{gathered}
\ \ \  =\ \ \ 
\begin{gathered}
\begin{forest}
for tree={circle,draw}
    [[A][[B][C]]]
  \end{forest}
\end{gathered}
\ - \ \ 
\begin{gathered}
\begin{forest}
for tree={circle,draw}
    [[B][[A][C]]]
  \end{forest}
\end{gathered}.
\end{equation}
Using this as a rewriting rule (from left to write), one can write any tree (that is, any Lie monomial) as a linear combination of trees whose left son is a leaf. Applying the induction hypothesis to the right sons, one gets a linear combination of linear trees.

There are $n!$ linear Lie monomials in degree $n$, which is clearly strictly greater than the number of Lyndon words of length $n$, so they must be linearly dependent. It is the need to control this redundancy that leads to consider Lyndon words (or, more generally, Hall sets).

\begin{lem}\label{lem_letters}
Any Lie monomial is a linear combination of linear Lie monomials with the same letters (counted with repetitions). Also, it is a linear combination of Lyndon monomials with the same letters. 
\end{lem}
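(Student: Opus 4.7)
The plan is to equip $\mathfrak LV$ with a \emph{multigrading} by multisets of letters, so that both statements reduce to the observation that the relevant rewriting rules preserve this multigrading.

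First I would set up the multigrading. The free magma $\mathcal M(\mathcal A)$ decomposes as a disjoint union $\bigsqcup_S \mathcal M(\mathcal A)_S$ indexed by finite multisets $S$ of elements of $\mathcal A$, where $\mathcal M(\mathcal A)_S$ consists of those trees whose multiset of leaves is $S$. This gives a decomposition $\Z\mathcal M(\mathcal A) = \bigoplus_S \Z\mathcal M(\mathcal A)_S$. The key observation is that both the antisymmetry relation $[x,y] + [y,x]$ and the Jacobi relation $[[x,y],z] - [x,[y,z]] + [y,[x,z]]$ (with $x,y,z$ Lie monomials) lie inside a single multigraded component. Hence the multigrading descends to $\mathfrak LV = \bigoplus_S \mathfrak L V_S$, and any Lie monomial with multiset of letters $S$ represents an element of $\mathfrak L V_S$.

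Next I would handle the first claim by induction on the depth of the underlying tree, following the sketch in the text just before the lemma. If $M = [L, R]$ with $L$ a leaf, apply the induction hypothesis to $R$ and prepend $[L, -]$. If instead $L = [A, B]$, the Jacobi rewrite \eqref{Jacobi_trees} gives $M = [A, [B, R]] - [B, [A, R]]$, in which both terms have strictly smaller left-depth; iterate to reduce to the previous case. By the multigrading observation, every linear Lie monomial obtained has the same multiset of letters as $M$, which yields the first assertion.

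For the second claim, I would invoke the Lyndon basis theorem (Theorem \ref{Lyndon_basis}): the family $(P_w)_{w \in \Lynd}$ is a $\Z$-basis of $\mathfrak LV$. A straightforward induction on $|w|$ using the standard factorization $w = uv$ and $P_w = [P_u, P_v]$ shows that $P_w \in \mathfrak LV_{S(w)}$, where $S(w)$ is the multiset of letters of $w$. Thus the Lyndon basis is compatible with the multigrading; more precisely, $\{P_w : S(w) = S\}$ is a basis of $\mathfrak LV_S$. A Lie monomial $M$ with multiset of letters $S$ lives in $\mathfrak LV_S$, so its decomposition on the Lyndon basis involves only $P_w$ with $S(w) = S$, proving the second assertion. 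There is no real obstacle here once the multigrading is in place; the only thing to verify is that the Jacobi and antisymmetry relations are homogeneous for the multidegree, which is immediate from their definition.
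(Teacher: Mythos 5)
Your proof is correct and follows essentially the same route as the paper: the first claim via the Jacobi rewriting rule \eqref{Jacobi_trees}, and the second via the $\mathbb N\{\mathcal A\}$-multigrading of $\Z\mathcal M(\mathcal A)$, which descends to $\mathfrak LV$ because antisymmetry and Jacobi are homogeneous. Your explicit remark that each $P_w$ is homogeneous of multidegree $S(w)$ (by induction on the standard factorization) is a point the paper leaves implicit, and is a welcome clarification.
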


\begin{proof}
The first part follows from the rewriting process that we have just described. The second one is a bit trickier: although we know that a decomposition into a linear combination of Lyndon monomials \emph{exists} (Prop \ref{Lyndon_words_generate}), we did not give an algorithm to compute it. However, we can use a homogeneity argument, as follows: $\Z \mathcal M(\mathcal A)$ is $\mathbb N \{\mathcal A\}$-graded, the degree of an element of the free magma $\mathcal M(\mathcal A)$ being its image in the free commutative monoid $\mathbb N \{\mathcal A\}$ (which counts the number of appearance of each letter in a given non-associative word). Moreover, the antisymmetry and the Jacobi relations are homogeneous with respect to this degree, so that the quotient $\mathfrak L[\mathcal A]$ is again a graded abelian group with respect to this degree. As a consequence, if we write a Lie monomial $m$ of degree $d$ as a linear combination of Lyndon monomials, taking the homogeneous component of degree $d$ results in an expression of $m$ as a linear combination of Lyndon monomials of degree $d \in \mathbb N \{\mathcal A\}$, as claimed.
\end{proof}

Remark that the expression of $m$ obtained in the proof by taking the homogeneous component must in fact must be the same as the first one, because of Theorem \ref{Lyndon_basis}.

\medskip

Linear trees can be used to define a basis of the reduced free Lie ring $R\mathfrak L[n]$ which could be used to replace the Lyndon basis in all our work (this is in fact the point of view used in \cite{Meilhan-Yasuhara}):

\begin{lem}
For all integer $k \geq 2$, a basis of $R\mathfrak L[n]_k$ is given by Lie monomials which are letters or of the form $[y_{i_1}, ..., y_{i_k}]$ where the $i_j \leq n$ are pairwise distinct and satisfy $i_k = \max\limits_j (i_j)$.
\end{lem}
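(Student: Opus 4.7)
The plan is to exhibit an isomorphism revealing the proposed set as a $\mathbb Z$-basis of $R\mathfrak L[n]_k$. First I would verify the count: a proposed monomial is determined by a $k$-subset $S\subseteq\{1,\ldots,n\}$ (which fixes $M=\max S$ at position $k$) together with an ordering of $S\setminus\{M\}$ on the first $k-1$ positions, yielding $(k-1)!\binom{n}{k}$ monomials, matching the rank of $R\mathfrak L[n]_k$ by Proposition~\ref{rk_of_RL}.

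Let $V_k\subseteq A[n]_k$ denote the $\mathbb Z$-submodule spanned by the monomials $y_{j_1}\cdots y_{j_k}$ with pairwise distinct indices and $j_k=\max\{j_1,\ldots,j_k\}$. By Fact~\ref{basis_of_A} this is a direct summand of $A[n]_k$, free of rank $(k-1)!\binom{n}{k}$ (counted as above). Let $\pi:A[n]_k\twoheadrightarrow V_k$ denote the projection annihilating the complementary summand. The core calculation I would establish is
\[\pi\bigl([y_{i_1},\ldots,y_{i_{k-1}},y_M]\bigr)\;=\;y_{i_1}y_{i_2}\cdots y_{i_{k-1}}y_M\]
for any proposed basis element, that is, whenever $M=\max\{i_1,\ldots,i_{k-1},M\}$ and the indices are distinct. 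Equivalently, the expansion of this Lie monomial in $A[n]_k$ contains $y_{i_1}\cdots y_{i_{k-1}}y_M$ with coefficient $+1$ and no other monomial ending in its own maximum letter.

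This leading-monomial identity is proved by induction on $k$: writing $[y_{i_1},X]=y_{i_1}X-Xy_{i_1}$ with $X=[y_{i_2},\ldots,y_{i_{k-1}},y_M]$, every monomial of $Xy_{i_1}$ ends in $y_{i_1}\neq y_M$, while among the monomials of $y_{i_1}X$ the ones ending in $y_M$ correspond bijectively to monomials of $X$ ending in $y_M$. By the inductive hypothesis, in the expansion of $X$ the only monomial ending in $y_M$ is $y_{i_2}\cdots y_{i_{k-1}}y_M$ with coefficient $+1$, and prepending $y_{i_1}$ yields the claimed formula.

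Once this is established, $\pi$ sends the proposed set bijectively to the canonical generating set of $V_k$. In particular $\pi|_{R\mathfrak L[n]_k}$ is surjective onto $V_k$, and since both are free $\mathbb Z$-modules of the same rank $(k-1)!\binom{n}{k}$, this surjection must be an isomorphism. The preimage of the basis of $V_k$ is then a basis of $R\mathfrak L[n]_k$, and by the leading-monomial identity this preimage is exactly the proposed set. The only real obstacle is the leading-monomial induction itself; the rest is a clean rank-comparison argument that sidesteps the combinatorial manipulations via Jacobi one might naively attempt.
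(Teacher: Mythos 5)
Your proof is correct, but it takes a genuinely different route from the paper's. The paper proves the \emph{spanning} half directly: antisymmetry moves the maximal letter to the right-most leaf, the rewriting rule \eqref{Jacobi_trees} then reduces any tree to a combination of linear trees while preserving the right-most leaf and the set of letters, so the proposed monomials generate $R\mathfrak L[n]_k$; since there are exactly $(k-1)!\binom{n}{k}$ of them and this equals the rank of the free abelian group $R\mathfrak L[n]_k$ (Prop.~\ref{rk_of_RL}), a generating family of that cardinality is automatically a basis. You instead prove a \emph{triangularity} statement: expanding the left-normed bracket inside $A[n]_k$ (legitimate by Prop.~\ref{Prim(A)} and Fact~\ref{basis_of_A}), the only associative monomial ending in the maximal letter is $y_{i_1}\cdots y_{i_{k-1}}y_M$, with coefficient $+1$; your induction is sound, the key point being that every term of $Xy_{i_1}$ ends in $y_{i_1}\neq y_M$. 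The projection $\pi$ onto the span $V_k$ of such associative words then carries the proposed set bijectively onto a basis of $V_k$, and the same rank count upgrades the surjection $R\mathfrak L[n]_k\twoheadrightarrow V_k$ to an isomorphism. Both arguments ultimately rest on Prop.~\ref{rk_of_RL}; the paper's is shorter given the tree-rewriting machinery already set up in the appendix, while yours is in the spirit of Lemma~\ref{gr(P)=Id} (leading-term analysis in the associative algebra) and buys something extra: an explicit dual description of the basis, namely that the coordinate of an element of $R\mathfrak L[n]_k$ on $[y_{i_1},\ldots,y_{i_{k-1}},y_M]$ is the coefficient of the word $y_{i_1}\cdots y_{i_{k-1}}y_M$ in its expansion in $A[n]$ --- exactly the kind of statement one wants when reading off Milnor invariants.
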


\begin{proof}
Using antisymmetry, one sees that up to a sign, any Lie monomial without repetition is equal to a Lie monomial with the same letter where the right-most factor (the right-most leaf of the corresponding tree) bears the maximal index. Then we can use the re-writing rule \eqref{Jacobi_trees} to get a linear combination of linear trees, and the right-most leaf stays the same throughout the process, as does the set of letters used. This shows that Lie monomials of the form described in the lemma generate the abelian  group $R\mathfrak L[n]_k$. Moreover, there are $(k-1)! \binom{n}{k}$ such monomials, which is already known to be the rank of $R\mathfrak L[n]_k$ (Prop. \ref{rk_of_RL}), hence this family must be a basis of $R\mathfrak L[n]_k$.
\end{proof}

\end{subappendices}

\bibliographystyle{alpha}
\bibliography{Ref_homotopy_inv}

\end{document}